\numberwithin{equation}{section}
\numberwithin{equation}{section}
\newtheorem{introtheorem}{Theorem}
\newtheorem{introcorollary}[introtheorem]{Corollary}
\newtheorem{theorem}{Theorem}[section]
\newtheorem{lemma}[theorem]{Lemma}
\newtheorem{proposition}[theorem]{Proposition}
\newtheorem*{theorem*}{Theorem}
\newtheorem*{corollary*}{Corollary}
\theoremstyle{definition}
\newtheorem{definition}[theorem]{Definition}
\theoremstyle{remark}
\theoremstyle{remark}
\newtheorem{remark}[theorem]{Remark}
\newtheorem{question}[theorem]{Question}
\begin{document}

\title{Quantum metric structures on Iwahori--Hecke algebras}

\author{Mario Klisse}

\author{Helena Perovi\'{c}}

\address{KU Leuven, Department of Mathematics,
Celestijnenlaan 200B, 3001 Leuven, Belgium}

\email{mario.klisse@kuleuven.be}

\email{helena.perovic@student.kuleuven.be}

\date{\today. \emph{MSC2010:}  20C08, 46L87, 20F55, 20F65. The first author is supported by the FWO postdoctoral grant 1203924N of the Research Foundation Flanders.}

\maketitle

\begin{abstract}
Iwahori–Hecke algebras are $q$-deformations of group algebras of Coxeter groups. In this article, we initiate a systematic study of quantum metric structures on Iwahori–Hecke algebras by establishing that, for finite rank right-angled Coxeter systems, the canonical filtrations of the corresponding Iwahori–Hecke algebras satisfy the Haagerup-type condition introduced by Ozawa and Rieffel if and only if the Coxeter diagram’s complement contains no induced squares. As a consequence, these algebras naturally inherit compact quantum metric space structures in the sense of Rieffel. Additionally, we investigate continuity phenomena in this framework by demonstrating that, as the deformation parameter $q$ approaches 1, the deformed Iwahori–Hecke algebras converge to the group algebra of the Coxeter group in Latrémolière’s quantum Gromov–Hausdorff propinquity.
\end{abstract}

\maketitle

\section{Introduction}

\vspace{3mm}

Iwahori–Hecke algebras are deformations of group algebras associated with Coxeter groups, playing a pivotal role in a variety of mathematical disciplines, including representation theory \cite{Bourbaki02}, algebraic geometry \cite{KazhdanLusztig79, Kumar02}, knot theory \cite{Jones87}, combinatorics \cite{HivertThiery09, HivertSchillingThiery13}, and harmonic analysis \cite{Opdam04, DelormeOpdam08, OpdamSolleveld10, Solleveld12, Solleveld18}. First introduced by Iwahori and Matsumoto \cite{Iwahori64, IwahoriMatsumoto65}, these algebras emerged in the context of the representation theory of finite groups of Lie type and $p$-adic reductive groups. Their structure is particularly well understood for spherical and affine Coxeter systems (see \cite{IwahoriMatsumoto65, KazhdanLusztig79, Bernstein84, KazhdanLusztig87}), while in the setting of infinite non-affine Coxeter groups, they naturally arise in connection with buildings and Kac–Moody groups acting on them \cite{Remy12}.

Beyond their algebraic features, Iwahori–Hecke algebras naturally give rise to rich analytic structures. Given any Coxeter system $(W, S)$ and deformation parameter $q$, the corresponding Iwahori--Hecke algebra $\mathbb{C}_{q}[W]$ admits a natural representation on the Hilbert space of square-summable complex functions on $W$, leading to C$^\ast$-algebraic and von Neumann-algebraic completions $C_{r,q}^{\ast}(W)$ and $\mathcal{N}_{q}(W)$, respectively. Such completions were considered early on by Matsumoto \cite{Matsumoto77} in the case of spherical and (extended) affine Coxeter groups. Later, in the 1980s, Baum, Higson, and Plymen investigated affine-type Iwahori--Hecke algebras within the framework of the Baum–Connes conjecture (see \cite{BaumConnesHigson94}). The work of Opdam \cite{Opdam04} further deepened the analytic study of these algebras, inspiring a series of subsequent contributions \cite{DelormeOpdam08, OpdamSolleveld10, Solleveld12, Solleveld18}.

In the infinite non-affine setting, Hecke operator algebras first appeared in studies on the $\ell^{2}$-cohomology of buildings \cite{Dymara06, DavisDymaraJanuszkiewiczOkun07, Davis08}. Motivated by these developments, Davis posed in \cite[Chapter 19]{Davis08} the problem of classifying factorial Hecke-von Neumann algebras. A classification in the right-angled single-parameter case was achieved by Garncarek \cite{Garncarek16}, who also established a connection between these algebras and Dykema's interpolated free group factors \cite{Dykema93, Radulescu94}; Raum and Skalski subsequently extended these results to the multi-parameter case \cite{RaumSkalski23}. Since then, Hecke operator algebras have been the subject of extensive investigation both at the C$^{\ast}$-algebraic and von Neumann-algebraic levels \cite{CaspersSkalskiWasilewski19, Caspers20, CaspersKlisseLarsen21, RaumSkalski22, Klisse23-1, Klisse23-2, RaumSkalski23, BorstCaspersWasilewski24, Klisse25}.

The aim of this article is to initiate the study of quantum metric structures on Iwahori--Hecke algebras from the viewpoint of compact quantum metric spaces. A \emph{compact quantum metric space} $(A, L)$ consists of an order unit space $A$ with a distinguished unit $e$, equipped with a densely defined seminorm $L$ -- called the \emph{Lip-norm} -- such that $\ker(L) = \mathbb{R}e$, and such that the metric
\[
\rho_{L}(\phi, \psi) := \sup \{ |\phi(x) - \psi(x)| \mid L(x) \leq 1 \}
\]
induces the weak$^{\ast}$-topology on the state space of $A$. This concept was introduced by Rieffel in \cite{Rieffel98, Rieffel99, Rieffel04}, building on ideas from Connes’ noncommutative geometry \cite{Connes89, Connes94} and motivations arising from high-energy physics.

Our primary motivation is two-fold. On one hand, we aim to construct and study novel classes of compact quantum metric spaces within the context of Iwahori--Hecke algebras. On the other hand, we emphasize that the perspective of noncommutative metric geometry opens promising new directions in the study of Iwahori--Hecke algebras, meriting further systematic exploration.

Following a framework introduced by Ozawa and Rieffel \cite{OzawaRieffel05}, given a finite rank Coxeter system $(W, S)$ and a multi-parameter $q$, we define a natural $\ast$-filtration of $\mathbb{C}_{q}[W]$ by finite-dimensional subspaces. This filtration induces a spectral triple $(\mathbb{C}_{q}[W], \ell^{2}(W), D_{S})$, where $D_{S} := \sum_{n \in \mathbb{N}} n P_{n}$ and $P_{n}$ denotes the orthogonal projection onto the span of reduced words of length $n$ in $\ell^{2}(W)$. Analogous constructions for group C$^{\ast}$-algebras were explored in \cite{Rieffel02, OzawaRieffel05, ChristRieffel}.

By blending techniques from \cite{OzawaRieffel05} with generator decompositions in graph products developed in \cite{CaspersKlisseLarsen21}, we establish a combinatorial condition characterizing when the canonical filtrations of right-angled Iwahori--Hecke algebras satisfy a \emph{Haagerup-type condition} in the sense of Ozawa–Rieffel.

\begin{introtheorem}[{Theorem \ref{HaagerupTypeCondition}}] \label{TheoremA}
Let $(W, S)$ be a finite rank, right-angled Coxeter system, and let $\Gamma$ denote the finite undirected simplicial graph with vertex set $S$ and edge set $\{ (s, t) \in S \times S \mid m_{s,t} = 2 \}$. Then the following statements hold:
\begin{enumerate}
    \item If $\Gamma$ contains no induced square, there exists a constant $K > 0$ such that
    \[
    \| P_{i} x P_{j} \| \leq K C_{q} \| x \delta_{e} \|_{2}
    \]
    for all multi-parameters $q$ and $i, j, n \in \mathbb{N}$, $x \in \chi_{n}(\mathbb{C}_{q}[W])$, where $\chi_{n}$ denotes the word length projection onto reduced words of length $n$ and
    \[
    C_{q} := \max_{\Gamma_{0} \in \mathrm{Cliq}(\Gamma)} \left( \prod_{t \in V\Gamma_{0}} |p_{t}(q)| \right).
    \] \label{1}
    \item If $\Gamma$ contains an induced square, then for any multi-parameter $q$, no constant $K > 0$ exists such that
    \[
    \| P_{i} x P_{j} \| \leq K \| x \delta_{e} \|_{2}
    \]
    holds uniformly for all $i, j, n \in \mathbb{N}$ and $x \in \chi_{n}(\mathbb{C}_{q}[W])$.
\end{enumerate}
\end{introtheorem}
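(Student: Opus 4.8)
The plan is to prove the two implications separately, organized around the observation that the combinatorial dichotomy is really a geometric one. By a standard criterion (see, e.g., \cite{Davis08}), the right-angled Coxeter group $W$ is word-hyperbolic precisely when $\Gamma$ contains no induced square; an induced square on vertices $\{a,b,c,d\}$, with commuting pairs forming the $4$-cycle $a,b,c,d$ and non-commuting diagonals, generates a standard parabolic copy of $D_{\infty}\times D_{\infty}$, hence a copy of $\mathbb{Z}^{2}$. Thus at the undeformed point $q=1$, statement (1) is a block-refined Haagerup (property (RD)) estimate of the kind exploited in \cite{OzawaRieffel05}, and the substantive content is to upgrade it to the deformed algebra with explicit control of the multi-parameter through $C_{q}$, while statement (2) isolates the flat $\mathbb{Z}^{2}$-obstruction responsible for the failure of rapid decay.

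For (1) I would begin from the generator decomposition of homogeneous elements in graph products developed in \cite{CaspersKlisseLarsen21}. Writing $x=\sum_{|w|=n}x_{w}T_{w}\in\chi_{n}(\mathbb{C}_{q}[W])$, the action of $x$ on a basis vector $\delta_{v}$ with $|v|=j$ proceeds by concatenation followed by reduction, where reductions are governed by the commutations recorded in $\Gamma$ and each elementary cancellation of a generator $s$ produces a lower-order term weighted by the Hecke coefficient $p_{s}(q)-p_{s}(q)^{-1}$. Only a single cancellation depth $k=(n+j-i)/2$ contributes to the block $P_{i}xP_{j}$, so I would decompose $P_{i}xP_{j}=\sum_{\mu}A_{\mu}$ into operators indexed by the middle overlap pattern $\mu$ that gets annihilated, and reduce the theorem to a uniform bound on each $A_{\mu}$.

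The crux, and the step I expect to be the main obstacle, is the estimate $\|A_{\mu}\|\leq C_{q}\,\|x\delta_{e}\|_{2}$, where $\|x\delta_{e}\|_{2}$ is the $\ell^{2}$-norm of the coefficient vector of $x$. Here the absence of an induced square forces the word combinatorics to be tree-like: once $v$ and the overlap type are fixed, the factorization of each contributing $w$ into its surviving and cancelled parts is essentially unique, so $A_{\mu}$ behaves as a partial isometry rather than as a long banded matrix, and a Cauchy--Schwarz estimate in the variable $w$ yields the claimed bound. The factor $C_{q}=\max_{\Gamma_{0}\in\mathrm{Cliq}(\Gamma)}\prod_{t\in V\Gamma_{0}}|p_{t}(q)|$ enters because mutually commuting generators forming a clique may be cancelled simultaneously within one overlap step, so their individual relation coefficients multiply; summing over the finitely many overlap types $\mu$, whose number is bounded in terms of $\Gamma$ alone, produces the constant $K$. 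The technical heart is the uniform bookkeeping of these $q$-dependent lower-order terms across all multi-parameters, together with the verification that no-induced-square genuinely prevents overlap patterns from accumulating into bands of unbounded length.

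For (2) I would use the isometric, filtration-preserving inclusion $\mathbb{C}_{q}[W_{T}]\hookrightarrow\mathbb{C}_{q}[W]$ of the standard parabolic subalgebra attached to $T=\{a,b,c,d\}$, with $W_{T}\cong D_{\infty}\times D_{\infty}$. Put $g=ac$ and $h=bd$, each of Coxeter length $2$ and lying in commuting infinite dihedral factors, and consider the first-quadrant homogeneous elements $x_{n}=\sum_{p+r=n}T_{g^{p}h^{r}}\in\chi_{2n}(\mathbb{C}_{q}[W])$. Since no reduction ever occurs among nonnegative powers of $g$ and $h$, lengths simply add and each such product is a single basis element; consequently the deformation only rescales norms by bounded $q$-factors. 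On the vectors $\delta_{g^{a}h^{m-a}}$ ($0\leq a\leq m$), which span part of the length-$2m$ shell, the block $P_{2(m+n)}\,x_{n}\,P_{2m}$ acts by $\delta_{g^{a}h^{m-a}}\mapsto\sum_{p+r=n}\delta_{g^{a+p}h^{m-a+r}}$, that is, as a banded Toeplitz operator whose norm tends to $\|\sum_{p=0}^{n}e^{ip\theta}\|_{\infty}=n+1$ as $m\to\infty$. Since $\|x_{n}\delta_{e}\|_{2}$ is comparable to $\sqrt{n+1}$, the ratio $\|P_{2(m+n)}x_{n}P_{2m}\|/\|x_{n}\delta_{e}\|_{2}$ grows like $\sqrt{n+1}\to\infty$, so no uniform constant $K$ can exist.
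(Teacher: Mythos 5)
Your part (2) is correct and is essentially the paper's own argument: the paper works with the same parabolic copy of $D_\infty\times D_\infty$ spanned by the square, forms the same homogeneous elements $\frac{1}{n}\sum_{i=1}^{n}T^{(q)}_{(uv)^i(st)^{n-i}}\in\chi_{2n}(\mathbb{C}_q[W])$, and obtains the lower bound $\sqrt{n/2}$ for the normalized block norm by testing against the averaged vector $\frac{1}{\sqrt{2n}}\sum_{j}\delta_{(uv)^j(st)^{2n-j}}$ --- which is precisely the vector realizing your Toeplitz-norm limit $n+1$ as $m\to\infty$. Your observation that positive powers of $g$ and $h$ never trigger the lower-order Hecke terms, so the computation is $q$-independent, is also the paper's (in fact no rescaling by $q$-factors occurs at all: the action on these vectors is literally the $q=1$ action).

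The genuine gap is in part (1). Your plan --- the generator decomposition of \cite{CaspersKlisseLarsen21}, splitting $P_i x P_j$ by overlap pattern, then Cauchy--Schwarz in the variable $\mathbf{w}$ --- is exactly the paper's skeleton, but all of the difficulty is concentrated in the sentence you yourself flag as the obstacle: ``once $v$ and the overlap type are fixed, the factorization of each contributing $w$ into its surviving and cancelled parts is essentially unique.'' As stated this is false (uniqueness fails in general; what is true is \emph{uniformly bounded multiplicity}), and you give no argument for it: ``tree-like combinatorics'' is an intuition, not a proof, and, notably, the word-hyperbolicity you invoke in your opening paragraph is never actually deployed in your argument for (1). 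The paper's Lemma~\ref{SetSize} is precisely this missing step: given two admissible factorization tuples for the same pair $(\mathbf{x},\mathbf{y})$ and block index $i$, it applies the Gromov four-point inequality to the competing prefixes $\mathbf{v}_1,\mathbf{v}_1'\leq\mathbf{x}$ (whose lengths differ by at most $\#S$ because of the length constraints forced by the clique sizes), deduces $|\mathbf{v}_1^{-1}\mathbf{v}_1'|\leq \#S+\delta$ and the analogous bounds for the other components, and concludes that the number of tuples is at most $(\#\mathrm{Cliq}(\Gamma))^2R^4$, where $R$ is the cardinality of the ball of radius $\#S+\delta$ in $W$. Without this quantitative input your operators $A_\mu$ need not behave like partial isometries, and the Cauchy--Schwarz step produces a multiplicity constant you have not controlled, so the uniform constant $K$ is not obtained. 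Two minor inaccuracies in the same part: the cancellation coefficient in the normalization used here is $p_s(q)=(q_s-1)/\sqrt{q_s}$, not $p_s(q)-p_s(q)^{-1}$; and the block depth is not the single value $k=(n+j-i)/2$ but depends on the clique size $l$ of the diagonal part, $k_0=\tfrac{1}{2}(i-j+n-l)$, which is exactly why the constant $C_q$ ranges over all cliques of $\Gamma$.
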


As an immediate consequence of \cite[Sections 2 and 3]{OzawaRieffel05}, Iwahori--Hecke algebras associated with Coxeter groups satisfying condition \eqref{1} above can be endowed with compact quantum metric structures.

\begin{introcorollary}[{Theorem \ref{CQMSStatement}}] \label{CQMSStatement-1}
Let $(W, S)$ be a finite rank, right-angled Coxeter system, and let $q$ be a multi-parameter such that the graph $\Gamma$ defined in Theorem \ref{TheoremA} contains no induced square. Then, the pair $(C_{r, q}^{\ast}(W), L_{S}^{(q)})$ with
\[
L_{S}^{(q)}(x) :=
\begin{cases}
\| [D_{S}, x] \|, & \text{if } x \in \mathbb{C}_{q}[W], \\
\infty, & \text{otherwise},
\end{cases}
\]
defines a compact quantum metric space.
\end{introcorollary}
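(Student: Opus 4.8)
The plan is to reduce the statement to the abstract machinery of Ozawa and Rieffel, so that the only genuinely new ingredient is the Haagerup-type estimate already furnished by Theorem \ref{TheoremA}; everything else is a matter of checking that the canonical data of a right-angled Iwahori--Hecke algebra meets their hypotheses. Concretely, I would set $A := C_{r,q}^{\ast}(W)$, regard $\mathbb{C}_{q}[W]$ as its canonical dense $\ast$-subalgebra, and equip $A$ with the faithful tracial state $\tau(x) := \langle x\delta_{e}, \delta_{e}\rangle$, for which $\|x\delta_{e}\|_{2} = \tau(x^{\ast}x)^{1/2}$ is exactly the GNS $2$-norm appearing in Theorem \ref{TheoremA}. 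The grading data are the spectral projections $P_{n}$ of $D_{S}$ together with the word-length pieces $\chi_{n}(\mathbb{C}_{q}[W])$, and the candidate Lip-norm is $L_{S}^{(q)}(x) = \|[D_{S}, x]\|$.

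The first block of work is to verify that these data form a $\ast$-filtration in the sense of \cite[Section 2]{OzawaRieffel05}. Setting $\mathcal{F}_{n} := \bigoplus_{k=0}^{n}\chi_{k}(\mathbb{C}_{q}[W])$, each $\mathcal{F}_{n}$ is finite-dimensional because a finite rank Coxeter group has only finitely many elements of length at most $n$; it is self-adjoint since the involution sends $T_{w}$ to a multiple of $T_{w^{-1}}$ while $\ell(w^{-1}) = \ell(w)$; it is submultiplicative, $\mathcal{F}_{j}\mathcal{F}_{k}\subseteq\mathcal{F}_{j+k}$, because the Iwahori--Hecke relations never raise word length beyond the sum of the factors' lengths; and $\bigcup_{n}\mathcal{F}_{n} = \mathbb{C}_{q}[W]$ is norm-dense in $A$. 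I would also record the lower semicontinuity of $L_{S}^{(q)}$, which follows from its being a supremum of the C$^{\ast}$-continuous functionals $x\mapsto\|[D_{S},x]\xi\|$ over unit vectors $\xi$ in the domain of $D_{S}$.

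Next I would feed in part \eqref{1} of Theorem \ref{TheoremA}: since $\Gamma$ contains no induced square, there is $K>0$ with $\|P_{i}xP_{j}\|\le K\,C_{q}\,\|x\delta_{e}\|_{2}$ for every homogeneous $x\in\chi_{n}(\mathbb{C}_{q}[W])$ and all $i,j,n$, and for the fixed multi-parameter $q$ the quantity $C_{q}$ is a finite constant, being a maximum of finitely many finite products over the cliques of $\Gamma$. This is precisely the Haagerup-type inequality required in \cite[Sections 2 and 3]{OzawaRieffel05}, and their argument then proceeds via the Rieffel criterion. For the kernel I would argue directly that $[D_{S}, x] = 0$ forces $P_{i}xP_{j} = 0$ for $i\neq j$ (compress by $P_{i}$ and $P_{j}$), whence $x\delta_{e} = P_{0}xP_{0}\delta_{e}\in\mathbb{C}\delta_{e}$; since $\tau$ is faithful, $\delta_{e}$ is separating, and this upgrades to $x\in\mathbb{C}e$, i.e. $\ker L_{S}^{(q)} = \mathbb{R}e$ on the self-adjoint part. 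It remains to show that the image of the unit ball $\{x : L_{S}^{(q)}(x)\le 1\}$ in the quotient by the scalars is totally bounded for the operator norm; here the Haagerup bound controls smooth Ces\`aro--Fej\'er truncations $S_{N}$ in the grading uniformly in norm, so that $\sup_{L_{S}^{(q)}(x)\le1}\|x - S_{N}(x)\|\to0$ while each $S_{N}$ lands in a finite-dimensional subspace, giving the desired precompactness.

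The main obstacle is exactly this uniform control of the truncations, and it is instructive to see why a naive cut-off $\sum_{n\le N}\chi_{n}$ fails. Testing $[D_{S}, x]$ against $\delta_{e}$ yields only the diagonal estimate $\sum_{n}n^{2}\|x_{n}\delta_{e}\|_{2}^{2}\le L_{S}^{(q)}(x)^{2}$, which combined with the linear-in-$n$ growth inherent in passing from the blockwise bound $\|P_{i}xP_{j}\|\le K C_{q}\|x\delta_{e}\|_{2}$ to a bound on $\|x_{n}\|$ produces a non-summable tail. The resolution, carried out abstractly in \cite[Section 2]{OzawaRieffel05}, is to exploit the \emph{full} operator norm of $[D_{S}, x]$ together with the Schur-multiplier estimates afforded by the inequality, so that the Fej\'er-type averages converge uniformly in the spirit of an Arzel\`a--Ascoli argument. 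Since Theorem \ref{TheoremA} supplies precisely this inequality with a constant independent of $i,j,n$, invoking the Ozawa--Rieffel theorem completes the proof that $(C_{r,q}^{\ast}(W), L_{S}^{(q)})$ is a compact quantum metric space.
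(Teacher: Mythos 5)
Your proposal is correct and follows essentially the same route as the paper: verify that the word-length subspaces form a $\ast$-filtration (the paper's Lemma \ref{FiltrationLemma}), check the kernel and density conditions via the action on $\delta_e$ and faithfulness of $\tau_q$ (the paper does this through the identity \eqref{eq:2-Norm}), feed the uniform Haagerup-type estimate of Theorem \ref{TheoremA}\eqref{1} (with $C_q$ finite for the fixed $q$) into the machinery of \cite[Sections 2 and 3]{OzawaRieffel05}, and conclude total boundedness and hence the compact quantum metric space property via the Rieffel--Ozawa--Rieffel criterion (Theorem \ref{CQMSCharacterization}). Your only imprecision — that a single Fej\'er-type truncation ``lands in a finite-dimensional subspace,'' when in fact one needs both the band truncation in $|i-j|$ and the word-length cutoff $\chi_{\leq M}$ to reach finite dimensions — is harmless, since you (like the paper) defer that step to the cited Ozawa--Rieffel argument, and your final paragraph shows you understand the genuine two-parameter truncation issue.
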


One of the striking features of equipping C$^{\ast}$-algebras with noncommutative metric structures is the potential to formulate quantum analogues of the classical Gromov–Hausdorff distance. Such notions enable the study of convergence phenomena for noncommutative spaces, extending techniques from classical metric geometry to the quantum realm.

The first definition of a quantum Gromov–Hausdorff distance was proposed by Rieffel \cite{Rieffel04}, leading to a surge of activity and various alternative constructions \cite{Kerr03, Li03, Rieffel04, Li06, KerrHanfeng09, Latremoliere15, Latremoliere16, Latremoliere19, Latremoliere22}. In this paper, we adopt Latrémolière's \emph{quantum Gromov–Hausdorff propinquity} \cite{Latremoliere16}, which refines Rieffel's metric by incorporating Leibniz seminorms, thereby ensuring compatibility with algebraic structures.

By developing new methods based on positive definite functions on Coxeter groups, we establish the following result.

\begin{introtheorem}[{Theorem \ref{ConvergenceTheorem}}] \label{IntroductionTheorem2}
Let $(W, S)$ be a finite rank, right-angled Coxeter system such that the graph $\Gamma$ defined in Theorem \ref{HaagerupTypeCondition} contains no induced square. Then,
\[
(C_{r, q}^{\ast}(W), L_{S}^{(q)}) \to (C_{r}^{\ast}(W), L_{S}^{(1)})
\]
in the quantum Gromov–Hausdorff propinquity as $q \to 1$.
\end{introtheorem}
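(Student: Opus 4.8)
The plan is to exhibit, for each multi-parameter $q$ sufficiently close to $1$, a single bridge in the sense of Latr\'emoli\`ere from $(C_{r,q}^{\ast}(W), L_S^{(q)})$ to $(C_r^{\ast}(W), L_S^{(1)})$ whose length tends to $0$ as $q \to 1$; since the propinquity is dominated by the length of any one bridge, this yields the claimed convergence. The starting point is that every $C_{r,q}^{\ast}(W)$, as well as $C_r^{\ast}(W) = C_{r,1}^{\ast}(W)$, is faithfully represented on the \emph{same} Hilbert space $\ell^2(W)$, with the \emph{same} cyclic trace vector $\delta_e$, and that the Lip-norms are implemented by the \emph{same} operator $D_S$. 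In particular $L_S^{(q)}(x) = \|[D_S,x]\|$ is Leibniz for every $q$, since $[D_S,xy] = [D_S,x]y + x[D_S,y]$, so by Corollary~\ref{CQMSStatement-1} all the objects in question are genuine Leibniz compact quantum metric spaces and the propinquity is well defined. This common realization suggests the bridge $(\mathcal{D}_q, 1, \iota_q, \iota_1)$ with trivial pivot, where $\mathcal{D}_q \subseteq B(\ell^2(W))$ is the C$^{\ast}$-algebra generated by $C_{r,q}^{\ast}(W)$ and $C_r^{\ast}(W)$ and $\iota_q, \iota_1$ are the inclusions.

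The heart of the estimate is the \emph{reach} of this bridge, the Hausdorff distance in $\mathcal{D}_q$ between the Lip-balls of the two algebras. To control it I would use the canonical linear identification $\theta_q\colon \sum_w c_w T_w^{(q)} \mapsto \sum_w c_w w \in \mathbb{C}[W]$ of the two standard bases, which preserves the word-length grading, preserves the trace (both sides have $\tau(\,\cdot\,) = c_e$), is the identity on scalars, and satisfies $x\,\delta_e = \theta_q(x)\,\delta_e$. Given $x$ with $L_S^{(q)}(x) \le 1$, set $y := \theta_q(x)$; after subtracting a scalar one may assume $\|x\|$ is bounded by the state-space diameter, which is finite and, by part (1) of Theorem~\ref{TheoremA}, uniformly bounded in $q$ near $1$. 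Two estimates, uniform over the Lip-ball, are then required: $|L_S^{(1)}(y) - L_S^{(q)}(x)| \to 0$ and $\|x - y\|_{B(\ell^2(W))} \to 0$ as $q \to 1$. Both reduce to the same phenomenon, namely that the discrepancy between Hecke and group multiplication is governed by the structure constants of $\mathbb{C}_q[W]$, which are polynomials in the $p_s(q)$ and specialize to the group multiplication at $q = 1$.

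This is where the methods based on positive definite functions enter. Applying the radial completely positive multipliers $m_{\phi_r}$ associated with the positive-definite functions $\phi_r(w) = r^{|w|}$ on $W$ --- which are simultaneously defined and contractive on all $C_{r,q}^{\ast}(W)$ because they act diagonally on the common basis $\{\delta_w\}$ --- one reduces, at the cost of an error controlled by the Lip-norm and tending to $0$ as $r \to 1$, to elements essentially supported on words of bounded length. On such truncations the comparison of the two multiplications is governed by finitely many structure constants, and the uniform Haagerup-type bound $\|P_i x P_j\| \le K C_q \|x\delta_e\|_2$ of Theorem~\ref{TheoremA} converts the relevant operator norms into the common quantity $\|x\delta_e\|_2 = \|y\delta_e\|_2$, on which the two algebras literally agree; since $C_q \to 1$ as the $p_s(q) \to 1$, this yields both required estimates with error $\varepsilon(q,r) \to 0$. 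A diagonal choice $r = r(q) \to 1$ then makes the reach vanish as $q \to 1$. The \emph{height} of the bridge is comparatively soft: the common GNS vector $\delta_e$ and the common representation on $\ell^2(W)$ furnish compatible vector states on $\mathcal{D}_q$ restricting to states of both algebras, so the height is dominated by the same reach-type estimate, and $\mathrm{length}(\mathcal{D}_q, 1, \iota_q, \iota_1) \to 0$.

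I expect the main obstacle to be precisely the uniform norm comparison $\|x - y\| \to 0$ of the third paragraph. Because the algebras $C_{r,q}^{\ast}(W)$ are mutually non-isomorphic in general --- they interpolate free group factors --- no single $\ast$-isomorphism is available, and the difficulty is to control the discrepancy between the two multiplications \emph{simultaneously over all word lengths} while keeping the bound uniform in $q$. Resolving this requires the quantitative interplay of the positive-definite-function truncation, which tames the length grading, with the Haagerup-type bound of Theorem~\ref{TheoremA}, which passes from operator norms to the length-$\ell^2$ data; verifying that these two tools are compatible, with errors that can be sent to $0$ jointly in $q$ and the truncation parameter, is the crux of the proof.
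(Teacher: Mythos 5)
Your skeleton is the same as the paper's --- a trivial-pivot bridge over a common ambient algebra on $\ell^2(W)$ (the paper simply takes $\mathcal{B}(\ell^2(W))$, for which the height is exactly zero), the basis-transfer map $x \mapsto x^{(1)}$, radial Schur multipliers $m_\kappa$ coming from the positive definite functions $\kappa^{|\mathbf{w}|}$, and Haagerup-type estimates --- but two of your quantitative claims fail as stated, and they sit exactly at what you yourself call the crux. First, the bound of Theorem~\ref{TheoremA} cannot by itself ``convert the relevant operator norms into $\|x\delta_e\|_2$'': it controls single blocks, $\|P_i(x-x^{(1)})P_j\| \le K C_{q,1}\|x\delta_e\|_2$, but recovering an operator norm requires summing over all offsets $i-j$ and all word lengths, and on the Lip ball that sum diverges --- one only controls $\sum_n n^2\|\chi_n(x)\delta_e\|_2^2 \le 1$, which does not dominate $\sum_n n\|\chi_n(x)\delta_e\|_2$. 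This is why the paper proves the refined Lemma~\ref{HaagerupTypeCondition2}, $\|P_i\, m_\kappa(x-x^{(q')})\,P_j\| \le \kappa^{|i-j|}K C_{q,q'}\|x\delta_e\|_2$, whose geometric factor $\kappa^{|i-j|}$ (extracted from $|\mathbf{u}\mathbf{u}'|\ge |i-j|$ in the creation/annihilation decomposition) makes the block sums converge, at the price of the constants $1/(1-\kappa)$ and $\kappa/(1-\kappa)^2$ in Proposition~\ref{MagnitudeEstimate}. Consequently the comparison must be made between $m_\kappa(\overline{x})$ and $m_\kappa(\overline{x}^{(1)})$, never between $x$ and $x^{(1)}$ directly; your two ``required estimates'' for the raw transfer $y=\theta_q(x)$, uniformly over the Lip ball, are precisely the statements that are out of reach. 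Relatedly, $m_\kappa$ does \emph{not} map $C^*_{r,q}(W)$ into itself for $q\ne 1$ (the paper flags exactly this in its final remarks as the obstruction to continuity away from $q=1$), so the admissible approximant in $\mathcal{L}_1^{(1)}$ is $m_\kappa(\overline{x}^{(1)})$ --- which does lie in $\mathbb{C}[W]$, since $m_\kappa$ preserves the group algebra --- rescaled by $(1+F(q,1))^{-1}$ to force its Lip-norm below $1$.

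Second, the regularization error $\|x - m_\kappa(x)\|$ is \emph{not} ``controlled by the Lip-norm'': the same divergence rules out any bound of the form $C(\kappa)\,L_S^{(q)}(x)$ with $C(\kappa)\to 0$. What is true, and what the paper uses, is that $\|x-m_\kappa(x)\|$ is uniformly small over $\mathbf{B}=\bigcup_{q\in\mathcal{K}}\mathcal{B}_1^{(q)}$ because this union is totally bounded (Proposition~\ref{UnityBoundedness}, the ``slightly stronger'' statement of Section~\ref{Section2}, proved for exactly this purpose): one approximates by a finite net, on each element of which $m_\kappa\to\mathrm{id}$ in norm trivially, and uses contractivity of $m_\kappa$. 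Your proposal never invokes this compactness-in-$q$ statement, and without it the diagonal choice $r=r(q)\to 1$ does not close; the easier half of the reach estimate (approximating elements of $\mathcal{L}_1^{(1)}$ by elements of $\mathcal{L}_1^{(q)}$) likewise runs through total boundedness and a finite net rather than any uniform pointwise estimate. In short: right architecture, but the two lemmas that make it work --- the $\kappa$-weighted Haagerup bound and the uniform total boundedness over compact parameter sets --- are missing, and the mechanisms you propose in their place are false.
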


While the continuity of $q$-deformed spaces in the deformation parameter has been explored in other contexts \cite{Rieffel04, AguilarKaadKyed22}, explicit examples remain scarce. Moreover, as highlighted in \cite{AguilarHartglassPenneys22}, most existing results on quantum Gromov–Hausdorff convergence rely on finite-dimensional approximations. Our approach, in contrast, makes no use of such approximations, which we believe adds conceptual significance to our methods beyond the present setting.

\vspace{3mm}

\noindent \emph{Structure}. In Section \ref{Section1}, we recall essential preliminaries on graph theory, Coxeter groups, and compact quantum metric spaces. Section \ref{Section2} introduces natural $\ast$-filtrations on Iwahori--Hecke algebras and provides a characterization of the Haagerup-type condition for right-angled Coxeter groups. This leads to a strengthened version of Corollary \ref{CQMSStatement-1}, which we utilize in Section \ref{Section3}. There, we discuss Latrémolière’s quantum Gromov–Hausdorff propinquity and develop techniques involving Schur multipliers associated with positive definite functions on Coxeter groups, culminating in the proof of Theorem \ref{IntroductionTheorem2}. Section \ref{Section4} concludes with a discussion of our main results and outlines potential avenues for future research.

\vspace{3mm}


\section{Preliminaries} \label{Section1}

\vspace{3mm}

\subsection{General Notation}

We will write $\mathbb{N}:=\left\{ 0,1,2,...\right\} $ and $\mathbb{N}_{\geq 1}:=\left\{ 1,2,...\right\} $ for the natural numbers. The neutral element of a group is always denoted by $e$ and for a set $S$ we write $\#S$ for the number of elements in $S$.

The state space of a C$^{\ast}$-algebra $A$ is denoted by $\mathcal{S}(A)$, while $\mathfrak{sa}(A)$ denotes the self-adjoint elements of $A$. The bounded operators on a Hilbert space $\mathcal{H}$ are denoted by $\mathcal{B}(\mathcal{H})$. Furthermore, we write $p^{\perp}:=1-p$ for a projection $p$.

\vspace{3mm}


\subsection{Graphs\label{subsec:Graphs}}

Given a graph $\Gamma$, we denote its \emph{vertex set} by $V\Gamma$ and its \emph{edge set} by $E\Gamma$. Unless stated otherwise, all graphs considered in this paper are assumed to be finite, undirected, and \emph{simplicial} -- that is, $E\Gamma \subseteq (V\Gamma \times V\Gamma) \setminus \{(v,v) \mid v \in V\Gamma\}$.

For a vertex $v \in V\Gamma$, the \emph{link} of $v$, denoted $\mathrm{Link}(v)$, is the set of vertices $v' \in V\Gamma$ such that $(v, v') \in E\Gamma$. Furthermore, for a subset $X \subseteq V\Gamma$, we define its \emph{common link} as $\mathrm{Link}(X) := \bigcap_{v \in X} \mathrm{Link}(v)$, with the convention that $\mathrm{Link}(\emptyset) := V\Gamma$.

A \emph{clique} in $\Gamma$ is a subgraph $\Gamma_0 \subseteq \Gamma$ in which every pair of distinct vertices is connected by an edge. We write $\mathrm{Cliq}(\Gamma)$ for the set of all cliques in $\Gamma$, and $\mathrm{Cliq}(\Gamma, l)$ for the collection of cliques consisting of exactly $l$ vertices.

Given a subgraph $\Gamma_{0}\subseteq\Gamma$, we denote by $\mathrm{Comm}(\Gamma_{0})$ the set of disjoint pairs $(\Gamma_{1},\Gamma_{2})\in\mathrm{Cliq}(\Gamma)\times\mathrm{Cliq}(\Gamma)$ contained in the link of $\Gamma_{0}$.

\vspace{3mm}


\subsection{Coxeter Groups}

A \emph{Coxeter group} is a group $W$ that admits a presentation of the form
\[
W = \left\langle S \,\middle|\, (st)^{m_{s,t}} = e \text{ for all } s, t \in S \right\rangle,
\]
where $S$ is a (possibly infinite) generating set, and the exponents $m_{s,t} \in \{1, 2, \ldots, \infty\}$ satisfy $m_{s,s} = 1$ and $m_{s,t} \geq 2$ for all distinct $s, t \in S$. A relation of the form $(st)^m = e$ is included only when $m_{s,t} < \infty$; the case $m_{s,t} = \infty$ indicates that no relation is imposed for the corresponding pair. The pair $(W, S)$ is referred to as a \emph{Coxeter system}. The system has \emph{finite rank} if $S$ is finite, and it is said to be \emph{right-angled} if $m_{s,t} \in \{2, \infty\}$ for all $s \ne t$, meaning that any two distinct generators either commute or generate an infinite dihedral group.

In the right-angled setting, a cancellation of the form
\[
s_1 \cdots s_n = s_1 \cdots \widehat{s_i} \cdots \widehat{s_j} \cdots s_n \quad \text{for } s_1, \ldots, s_n \in S
\]
implies that $s_i = s_j$ and that $s_i$ commutes with the intervening generators $s_{i+1}, \ldots, s_{j-1}$. For a detailed exposition on Coxeter groups, we refer the reader to~\cite{Davis08}.

Given a Coxeter system $(W, S)$, we denote the associated word length function by $|\cdot|$. An expression $\mathbf{w} = s_1 \cdots s_n$ with $s_i \in S$ is called \emph{reduced} if $n = |\mathbf{w}|$. For elements $\mathbf{v}, \mathbf{w} \in W$, we say that $\mathbf{w}$ \emph{starts in} $\mathbf{v}$ if $|\mathbf{v}^{-1}\mathbf{w}| = |\mathbf{w}| - |\mathbf{v}|$. In this case, we write $\mathbf{v} \leq_R \mathbf{w}$. This relation defines a partial order on $W$, known as the \emph{right weak Bruhat order}, under which $W$ becomes a \emph{complete meet-semilattice} (see~\cite[Proposition~3.2.1]{BjoernerBrenti05}). For brevity, we will usually write $\leq$ instead of $\leq_R$.

For any subset $T \subseteq S$, the subgroup $W_T := \langle T \rangle$ generated by $T$ is called a \emph{special subgroup}. According to~\cite[Theorem~4.1.6]{Davis08}, $W_T$ is again a Coxeter group with the same exponents as $W$; explicitly,
\[
W_T \cong \left\langle T \,\middle|\, (st)^{m_{s,t}} = e \text{ for all } s, t \in T \right\rangle,
\]
with the isomorphism being canonical.

Moussong provided a fundamental characterization of word-hyperbolic Coxeter groups in the sense of Gromov (see~\cite{Gromov87}):

\begin{theorem}[{\cite[Theorem 17.1]{Moussong88}}] \label{MoussongTheorem}
Let $(W, S)$ be a Coxeter system of finite rank. Then the following statements are equivalent:
\begin{enumerate}
    \item $W$ is \emph{word-hyperbolic}; that is, there exists a constant $\delta \geq 0$ such that
    \[
    |\mathbf{w}_1^{-1} \mathbf{w}_2| + |\mathbf{w}_3^{-1} \mathbf{w}_4| \leq \max\left\{ |\mathbf{w}_1^{-1} \mathbf{w}_3| + |\mathbf{w}_2^{-1} \mathbf{w}_4|,\; |\mathbf{w}_1^{-1} \mathbf{w}_4| + |\mathbf{w}_2^{-1} \mathbf{w}_3| \right\} + \delta
    \]
    for all $\mathbf{w}_1, \mathbf{w}_2, \mathbf{w}_3, \mathbf{w}_4 \in W$.
    
    \item $W$ contains no subgroup isomorphic to $\mathbb{Z}^2$.
    
    \item There is no subset $T \subseteq S$ with $\#T \geq 3$ such that $W_T$ is infinite and virtually Abelian, and there do not exist disjoint subsets $T_1, T_2 \subseteq S$ such that $W_{T_1}$ and $W_{T_2}$ are infinite and commute.
\end{enumerate}
\end{theorem}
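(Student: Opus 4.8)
The plan is to route everything through the geometry of the Davis complex $\Sigma$ of $(W,S)$ equipped with Moussong's piecewise Euclidean metric. On this complex $W$ acts properly, cocompactly, and by isometries, so by the Milnor--\v{S}varc lemma $W$ is quasi-isometric to $\Sigma$. Since the four-point inequality in statement (1) is a formulation of Gromov hyperbolicity that is invariant under quasi-isometry, $W$ is word-hyperbolic if and only if $\Sigma$ is Gromov hyperbolic. I would then prove the cycle $(1)\Rightarrow(2)\Rightarrow(3)\Rightarrow(1)$, where the first two implications are soft and essentially all of the content lies in $(3)\Rightarrow(1)$.

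For $(1)\Rightarrow(2)$ I would invoke the general structure theory of hyperbolic groups: in a word-hyperbolic group the centralizer of any infinite-order element is virtually cyclic, so a copy of $\mathbb{Z}^{2}$ (each of whose nontrivial elements has infinite order and non-virtually-cyclic centralizer) cannot embed. For $(2)\Rightarrow(3)$ I would argue by contraposition and exhibit a $\mathbb{Z}^{2}$ whenever (3) fails. If $T\subseteq S$ with $\#T\geq 3$ has $W_{T}$ infinite and virtually abelian, then its irreducible affine factors force $W_{T}$ to contain $\mathbb{Z}^{n}$ for some $n\geq 2$, using that the irreducible Euclidean Coxeter groups of rank $\geq 3$ are virtually $\mathbb{Z}^{n}$ with $n\geq 2$; and if disjoint $T_{1},T_{2}$ give commuting infinite $W_{T_{1}},W_{T_{2}}$, then choosing an infinite-order element in each (every infinite special subgroup contains one) produces $\mathbb{Z}\times\mathbb{Z}$.

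The heart is $(3)\Rightarrow(1)$, which I would establish through the flat plane theorem: a cocompact $\mathrm{CAT}(0)$ space is Gromov hyperbolic precisely when it contains no isometrically embedded flat plane $\mathbb{E}^{2}$. It therefore suffices to show (i) that $\Sigma$ with Moussong's metric is $\mathrm{CAT}(0)$, and (ii) that condition (3) rules out flat planes in $\Sigma$. Step (i) is Moussong's main technical result and proceeds via Gromov's link condition: each vertex link is a piecewise spherical complex assembled from the spherical Coxeter cells, and one must verify it is $\mathrm{CAT}(1)$, i.e.\ a metric flag complex whose systole is at least $2\pi$. Step (ii) then translates the existence of a flat plane into the presence of a Euclidean reflection subdiagram --- either an irreducible affine subsystem of rank $\geq 3$ or a product of two infinite irreducible pieces --- which are exactly the configurations excluded by (3).

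The main obstacle I expect is Step (i): establishing the $\mathrm{CAT}(1)$ ``no short loop'' condition on the links. This requires Moussong's Lemma, a purely combinatorial curvature estimate showing that closed geodesics in the piecewise spherical links coming from finite Coxeter groups have length at least $2\pi$, with equality characterizing exactly the affine and reducible degeneracies that feed into Step (ii). Controlling the metric combinatorics of these spherical Coxeter cell decompositions --- rather than the two soft implications --- is where essentially all the difficulty concentrates.
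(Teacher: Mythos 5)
Your proposal cannot be matched against an argument in the paper, because the paper gives none: Theorem \ref{MoussongTheorem} is quoted from Moussong's thesis \cite{Moussong88} and used as a black box. The natural benchmark is therefore Moussong's original proof (see also \cite[Theorem 12.6.1]{Davis08}), and your outline --- the Davis complex with Moussong's piecewise Euclidean metric, Milnor--\v{S}varc, the flat plane theorem, and Gromov's link condition reduced to Moussong's Lemma on piecewise spherical links --- is precisely that proof, with the difficulty correctly located in the $\mathrm{CAT}(1)$ estimate for links. Your implications $(1)\Rightarrow(2)$ and $(3)\Rightarrow(1)$ are sound as sketched.

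There is, however, a genuine gap in your $(2)\Rightarrow(3)$, and it is worth isolating because it exposes an imprecision in the statement you were asked to prove. You argue that if $\#T\geq 3$ and $W_{T}$ is infinite and virtually abelian, then ``its irreducible affine factors force $W_{T}$ to contain $\mathbb{Z}^{n}$ for some $n\geq 2$.'' This is a \emph{non sequitur}: the rank hypothesis concerns $W_{T}$, not its irreducible components. Concretely, take $T=S=\{s,t,u\}$ with $m_{s,t}=\infty$ and $m_{s,u}=m_{t,u}=2$, so that $W=W_{T}\cong D_{\infty}\times\mathbb{Z}/2\mathbb{Z}$. This Coxeter group has rank $3$, is infinite and virtually abelian (indeed virtually $\mathbb{Z}$), yet it contains no $\mathbb{Z}^{2}$ and is word-hyperbolic; its unique affine component is $\widetilde{A}_{1}=D_{\infty}$, of rank $2$. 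Consequently no argument can repair this step: with condition (3) read literally as printed, the implications $(1)\Rightarrow(3)$ and $(2)\Rightarrow(3)$ are false, and this example witnesses it. What Moussong actually proves has, in place of ``$\#T\geq 3$ and $W_{T}$ infinite and virtually Abelian'', the hypothesis that $(W_{T},T)$ is an affine (Euclidean) Coxeter system of rank $\geq 3$, i.e.\ that every irreducible component of $(W_{T},T)$ is infinite affine --- equivalently, $W_{T}$ is infinite, virtually abelian, and has no nontrivial finite irreducible factor. Under that hypothesis your deduction does go through: either some component is irreducible affine of rank $\geq 3$, hence virtually $\mathbb{Z}^{n}$ with $n\geq 2$, or there are at least two infinite components, whose product contains $\mathbb{Z}\times\mathbb{Z}$. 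Note that your hard direction $(3)\Rightarrow(1)$ is unaffected, since the printed condition (3) is stronger than Moussong's; note also that the defect is harmless for the paper's application, because in the right-angled case the affine clause plays no role and hyperbolicity is governed solely by the disjoint-commuting-infinite-subsystems condition, i.e.\ by the absence of induced squares.
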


In the case of right-angled Coxeter systems, Moussong’s theorem leads to an especially tractable characterization of word-hyperbolicity in terms of the complement of the (unlabeled) Coxeter diagram. Let $(W, S)$ be a right-angled Coxeter system, and let $\Gamma$ be the finite, undirected, simplicial graph with vertex set $V\Gamma := S$ and edge set $E\Gamma := \{ (s, t) \in S \times S \mid m_{s,t} = 2 \}$. Then $W$ is word-hyperbolic if and only if $\Gamma$ contains no \emph{induced square}; that is, there do not exist generators $s_1, s_2, s_3, s_4 \in S$ such that
\[
m_{s_1, s_2} = m_{s_2, s_3} = m_{s_3, s_4} = m_{s_4, s_1} = 2 \quad \text{and} \quad m_{s_1, s_3} = m_{s_2, s_4} = \infty.
\]

\vspace{3mm}


\subsection{Compact Quantum Metric Spaces}

The concept of compact quantum metric spaces was introduced by Rieffel in \cite{Rieffel98,Rieffel99,Rieffel04} within the general framework of order unit spaces. A \emph{compact quantum metric space} $(A,L)$ consists of an order unit space $A$ together with a densely defined seminorm $L$ on $A$, such that the dual seminorm induces a metric on the state space $\mathcal{S}(A)$ which metrizes the weak$^{\ast}$-topology. While this level of generality affords flexibility and conceptual elegance, our focus in the present work will be restricted to the C$^{\ast}$-algebraic setting, which suffices for our purposes.

\begin{definition}[{\cite[Definition 2.2]{Rieffel04}}] \label{CQMSDefinition}
Let $A$ be a unital C$^{\ast}$-algebra and let $L\colon \mathfrak{sa}(A)\rightarrow [0,\infty]$ be a seminorm. The pair $(A,L)$ is called a \emph{compact quantum metric space} if the following conditions are satisfied:
\begin{enumerate}
    \item $L(a)=0$ if and only if $a\in\mathbb{R}1$.
    \item The domain $\text{dom}(L) := \{a \in \mathfrak{sa}(A) \mid L(a) < \infty\}$ is dense in $\mathfrak{sa}(A)$.
    \item The \emph{Monge--Kantorovich metric} $d_L$ on $\mathcal{S}(A)$, defined by
    \[
    d_L(\varphi,\psi) := \sup\{|\varphi(a)-\psi(a)| \mid a \in \mathfrak{sa}(A),\, L(a) \leq 1\},
    \]
    metrizes the weak$^{\ast}$-topology on $\mathcal{S}(A)$.
\end{enumerate}
In this context, $L$ is referred to as a \emph{Lip-norm}.
\end{definition}

The Monge--Kantorovich metric had previously appeared in the work of Connes \cite{Connes89,Connes94} in connection with spectral triples, where it served as a tool to study metric properties in non-commutative geometry.
 
Rieffel, and later Ozawa and Rieffel provided a useful characterization of compact quantum metric spaces in terms of total boundedness \cite{Rieffel98,OzawaRieffel05}.

\begin{definition}[{\cite[Theorem 1.8]{Rieffel98} and \cite[Proposition 1.3]{OzawaRieffel05}}] \label{CQMSCharacterization}
Let $A$ be a unital C$^{\ast}$-algebra and $L\colon \mathfrak{sa}(A)\rightarrow[0,\infty]$ a seminorm with dense domain and $\ker(L) = \mathbb{R}1$. Then the following conditions are equivalent:
\begin{enumerate}
    \item The pair $(A,L)$ is a compact quantum metric space.
    \item The image of the set
    \[
    \mathcal{L}_1(A,L) := \{a \in \text{dom}(L) \mid L(a) \leq 1\}
    \]
    is totally bounded in the quotient space $A / \mathbb{C}1$.
    \item There exists a state $\varphi \in \mathcal{S}(A)$ such that the set
    \[
    \mathcal{L}_1(A,L,\varphi) := \{a \in \text{dom}(L) \mid \varphi(a) = 0,\, L(a) \leq 1\}
    \]
    is totally bounded in $A$.
\end{enumerate}
\end{definition}

In addition to generalizing the notion of classical compact metric spaces (see \cite{Rieffel98,Rieffel99}), Definition~\ref{CQMSDefinition} plays a central role in the construction of a non-commutative version of the Gromov--Hausdorff distance, developed in \cite{Rieffel04}. A refined variant of this distance, known as the \emph{quantum Gromov--Hausdorff propinquity}, was introduced by Latrémolière (see Section~\ref{qGHContinuity}). It is defined on a distinguished subclass of compact quantum metric spaces, the so-called \emph{Leibniz quantum compact metric spaces}, which satisfy additional compatibility conditions between the Lip-norm and the C$^{\ast}$-algebra structure.

\begin{definition}[{\cite[Definition 2.19]{Latremoliere16}}]
Let $(A,L)$ be a compact quantum metric space. We say that $(A,L)$ is a \emph{Leibniz quantum compact metric space} if the following conditions hold:
\begin{enumerate}
    \item The seminorm $L$ satisfies the \emph{Leibniz property}, i.e., for all $a,b \in \mathfrak{sa}(A)$,
    \[
    L(a \circ b) \leq \|a\| L(b) + \|b\| L(a) \quad \text{and} \quad
    L(\{a, b\}) \leq \|a\| L(b) + \|b\| L(a),
    \]
    where $a \circ b := \frac{1}{2}(ab + ba)$ and $\{a,b\} := \frac{1}{2i}(ab - ba)$.
    \item $L$ is \emph{lower semi-continuous} with respect to the operator norm, i.e., for every $r > 0$, the set $\{a \in \mathfrak{sa}(A) \mid L(a) \leq r\}$ is norm-closed.
\end{enumerate}
In this case, $L$ is referred to as a \emph{Leibniz Lip-norm}.
\end{definition}

\vspace{3mm}


\section{Iwahori--Hecke Algebras as Compact Quantum Metric Spaces\label{sec:Iwahori--Hecke-Algebras}} \label{Section2}

\vspace{3mm}

Iwahori--Hecke algebras, introduced by Iwahori and Matsumoto in \cite{Iwahori64,IwahoriMatsumoto65}, arise as deformations of group algebras of Coxeter groups and play a pivotal role in representation theory, algebraic geometry, combinatorics, and number theory. Originating in the 1950s as double coset algebras in the study of $p$-adic groups, these algebras encapsulate deep connections between algebraic, geometric, and combinatorial structures.

\begin{definition}[{\cite[Subsection 7.1]{Humphreys90} and \cite[Proposition 19.1.1]{Davis08}}] \label{IwahoriHeckeDefinition}
Let $(W,S)$ be a Coxeter system and $R$ a commutative unital ring. For any $q=(q_s)_{s \in S} \in R^S$ with $q_s = q_t$ whenever $s$ and $t$ are conjugate in $W$, the associated \emph{Iwahori--Hecke algebra} $R_q[W]$ is the free $R$-module with basis $\{\widetilde{T}^{(q)}_{\mathbf{w}} \mid \mathbf{w} \in W\}$, with multiplication defined by
\[
\widetilde{T}^{(q)}_s \widetilde{T}^{(q)}_{\mathbf{w}} = 
\begin{cases}
\widetilde{T}^{(q)}_{s\mathbf{w}}, & \text{if } |s\mathbf{w}| > |\mathbf{w}|, \\
q_s \widetilde{T}^{(q)}_{s\mathbf{w}} + (1 - q_s)\widetilde{T}^{(q)}_{\mathbf{w}}, & \text{if } |s\mathbf{w}| < |\mathbf{w}|.
\end{cases}
\]
\end{definition}

Throughout this paper, we focus on Iwahori--Hecke algebras over the complex numbers with positive real deformation parameters.

Let $(W,S)$ be a Coxeter system of finite rank and define $\mathbb{R}_{>0}^{(W,S)}$ as the subset of $\mathbb{R}_{>0}^{S}$ consisting of tuples $q=(q_s)_{s\in S}$ with $q_s = q_t$ whenever $s$ and $t$ are conjugate. Given $q\in\mathbb{R}_{>0}^{(W,S)}$ and $\mathbf{w}\in W$ with reduced expression $\mathbf{w}=s_{1}\cdots s_{n}$, by \cite[Chapter 17.1]{Davis08} the quantity $q_{\mathbf{w}}:=q_{s_{1}}\cdots q_{s_{n}}$ does not depend on the choice of the expression. Following \cite{Garncarek16} (see also \cite{CaspersKlisseLarsen21,Klisse23-1,Klisse23-2,RaumSkalski23}), we adopt a renormalization of the canonical generators by setting
\[
T^{(q)}_{\mathbf{w}} := q_{\mathbf{w}}^{-1/2} \widetilde{T}^{(q)}_{\mathbf{w}},
\]
which yields the multiplication rule
\begin{equation}
T^{(q)}_s T^{(q)}_{\mathbf{w}} =
\begin{cases}
T^{(q)}_{s\mathbf{w}}, & \text{if } |s\mathbf{w}| > |\mathbf{w}|, \\
T^{(q)}_{s\mathbf{w}} + p_s(q) T^{(q)}_{\mathbf{w}}, & \text{if } |s\mathbf{w}| < |\mathbf{w}|,
\end{cases}
\label{eq:MultiplicationRule}
\end{equation}
for all $s \in S$, $\mathbf{w} \in W$, where $p_s(q) := \frac{q_s - 1}{\sqrt{q_s}}$. The algebra $\mathbb{C}_q[W]$ becomes a $*$-algebra via the involution $(T^{(q)}_{\mathbf{w}})^* := T^{(q)}_{\mathbf{w}^{-1}}$. 

For each $n \in \mathbb{N}$, define the finite-dimensional subspace
\[
\mathbb{C}_q^{(n)}[W] := \operatorname{Span} \left\{ T^{(q)}_{\mathbf{w}} \mid \mathbf{w} \in W,\ |\mathbf{w}| \leq n \right\} \subseteq \mathbb{C}_q[W],
\]
and the associated projection $\chi_{\leq n}^{(q)} \colon \mathbb{C}_q[W] \to \mathbb{C}_q^{(n)}[W]$ by
\[
\chi_{\leq n}^{(q)}(T^{(q)}_{\mathbf{w}}) :=
\begin{cases}
T^{(q)}_{\mathbf{w}}, & |\mathbf{w}| \leq n, \\
0, & |\mathbf{w}| > n.
\end{cases}
\]
For $n \geq 1$ define $\chi_n^{(q)} := \chi_{\leq n}^{(q)} - \chi_{\leq n-1}^{(q)}$, and let $\chi_0^{(q)} := \chi_{\leq 0}^{(q)}$. When no confusion arises, we omit $q$ from the notation.

The proof of the following lemma is straightforward, and therefore omitted.

\begin{lemma} \label{FiltrationLemma}
Let $(W,S)$ be a finite rank Coxeter system and $q \in \mathbb{R}_{>0}^{(W,S)}$. Then the family $(\mathbb{C}_q^{(n)}[W])_{n \in \mathbb{N}}$ defines a \emph{$*$-filtration} of $\mathbb{C}_q[W]$ by finite-dimensional subspaces in the sense of \cite[Subsection V]{Voiculescu90}. That is, the following statements hold:
\begin{enumerate}
    \item $\mathbb{C}_q^{(0)}[W] \subsetneq \mathbb{C}_q^{(1)}[W] \subsetneq \cdots$,
    \item $\mathbb{C}_q^{(0)}[W] = \mathbb{C}1$, $\mathbb{C}_q^{(m)}[W] \cdot \mathbb{C}_q^{(n)}[W] \subseteq \mathbb{C}_q^{(m+n)}[W]$, and $(\mathbb{C}_q^{(n)}[W])^* = \mathbb{C}_q^{(n)}[W]$,
    \item $\mathbb{C}_q[W] = \bigcup_{n \in \mathbb{N}} \mathbb{C}_q^{(n)}[W]$.
\end{enumerate}
\end{lemma}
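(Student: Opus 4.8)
The plan is to verify the three defining properties of a $*$-filtration directly from the multiplication rule \eqref{eq:MultiplicationRule} and the definitions of $\mathbb{C}_q^{(n)}[W]$. Since all three spaces are defined as spans of the basis elements $T^{(q)}_{\mathbf{w}}$ indexed by word length, each claim reduces to a statement about word lengths in the Coxeter group $(W,S)$, and it suffices to check the relevant inclusions on basis elements.

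First I would establish property (1), the strictness of the chain $\mathbb{C}_q^{(0)}[W] \subsetneq \mathbb{C}_q^{(1)}[W] \subsetneq \cdots$. The inclusion $\mathbb{C}_q^{(n)}[W] \subseteq \mathbb{C}_q^{(n+1)}[W]$ is immediate from the definition, and strictness follows from the observation that in a finite rank Coxeter system there exist elements of every word length $n$ (for instance, taking powers of an alternating word in two non-commuting generators, or more simply noting that the group is infinite unless it is finite, in which case the statement holds up to the diameter; but for the generic infinite case there is no bound on word length). Since the $T^{(q)}_{\mathbf{w}}$ form a basis, an element $\mathbf{w}$ of length exactly $n+1$ witnesses that $T^{(q)}_{\mathbf{w}} \in \mathbb{C}_q^{(n+1)}[W] \setminus \mathbb{C}_q^{(n)}[W]$. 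For property (3), the identity $\mathbb{C}_q[W] = \bigcup_{n} \mathbb{C}_q^{(n)}[W]$ is just the statement that every element of the algebra is a finite linear combination of basis elements, each of which has some finite word length.

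The substantive point is property (2), specifically the multiplicativity $\mathbb{C}_q^{(m)}[W] \cdot \mathbb{C}_q^{(n)}[W] \subseteq \mathbb{C}_q^{(m+n)}[W]$. By bilinearity it suffices to check that $T^{(q)}_{\mathbf{v}} T^{(q)}_{\mathbf{w}} \in \mathbb{C}_q^{(m+n)}[W]$ whenever $|\mathbf{v}| \le m$ and $|\mathbf{w}| \le n$. I would prove this by induction on $|\mathbf{v}|$, writing $\mathbf{v} = s \mathbf{v}'$ with $s \in S$ and $|\mathbf{v}'| = |\mathbf{v}| - 1$, and applying the rule \eqref{eq:MultiplicationRule}. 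The key structural fact is that when one multiplies by a generator $s$, the resulting terms $T^{(q)}_{s\mathbf{u}}$ and $T^{(q)}_{\mathbf{u}}$ have word lengths at most $|\mathbf{u}| + 1$, so each single-generator multiplication raises the maximal length appearing by at most one; iterating $|\mathbf{v}|$ times yields terms of length at most $|\mathbf{v}| + |\mathbf{w}| \le m + n$. The remaining parts of (2), namely $\mathbb{C}_q^{(0)}[W] = \mathbb{C}1$ (which is immediate since the only element of length $0$ is $e$, with $T^{(q)}_e = 1$) and the self-adjointness $(\mathbb{C}_q^{(n)}[W])^* = \mathbb{C}_q^{(n)}[W]$ (which follows because $(T^{(q)}_{\mathbf{w}})^* = T^{(q)}_{\mathbf{w}^{-1}}$ and $|\mathbf{w}^{-1}| = |\mathbf{w}|$), are then routine.

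The main obstacle, such as it is, lies entirely in the bookkeeping for the multiplicativity in (2): one must track carefully that the recursive application of \eqref{eq:MultiplicationRule} never produces a term of length exceeding $m+n$, accounting for the fact that the second branch of the rule produces a lower-order term $p_s(q) T^{(q)}_{\mathbf{w}}$ alongside the leading term. Since both branches produce only terms of length at most $|\mathbf{w}|+1$, the induction closes cleanly, which is precisely why the authors deem the proof straightforward enough to omit.
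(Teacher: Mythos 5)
Your proposal is correct and is precisely the straightforward verification the paper alludes to when it omits the proof: induction on $|\mathbf{v}|$ via the multiplication rule \eqref{eq:MultiplicationRule} for the multiplicativity in (2), with the remaining properties following from the basis description, the identity $(T^{(q)}_{\mathbf{w}})^* = T^{(q)}_{\mathbf{w}^{-1}}$, and $|\mathbf{w}^{-1}| = |\mathbf{w}|$. You also rightly flag the one genuine wrinkle, namely that strictness of the chain in (1) requires word lengths in $W$ to be unbounded (i.e., $W$ infinite) -- a caveat the paper's statement itself glosses over for finite rank systems of spherical type.
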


A well-known result (see, e.g., \cite[Section 2]{Dymara06}) identifies a canonical tracial state on $\mathbb{C}_q[W]$.

\begin{lemma} \label{BasicLemma}
Let $(W,S)$ be a finite rank Coxeter system, let $q \in \mathbb{R}_{>0}^{(W,S)}$, and define $\tau_q := \chi_0^{(q)}$. Then the following statements hold:
\begin{enumerate}
\item For all $\mathbf{v},\mathbf{w}\in W$, 
\begin{equation}
\tau_{q}(T_{\mathbf{v}^{-1}}^{(q)}T_{\mathbf{w}}^{(q)})=\left\{ \begin{array}{cc}
1, & \text{if }\mathbf{v}=\mathbf{w}\\
0, & \text{if }\mathbf{v}\neq\mathbf{w}.
\end{array}\right.\label{eq:TraceMultiplication}
\end{equation}
\item $\tau_{q}$ is a \emph{faithful state}, i.e., $\tau_{q}(1)=1$ and $\tau_{q}(x^{\ast}x)>0$ for every non-zero $x\in\mathbb{C}_{q}[W]$.
\item $\tau_{q}$ is \emph{tracial}, i.e., $\tau_{q}(xy)=\tau_{q}(yx)$ for all $x,y\in\mathbb{C}_{q}[W]$.
\end{enumerate}
\end{lemma}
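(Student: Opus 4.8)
The plan is to establish the three assertions in the order stated, using the multiplication rule \eqref{eq:MultiplicationRule} and the definition $\tau_q := \chi_0^{(q)}$ as the only inputs. The key observation throughout is that $\tau_q(x)$ is simply the coefficient of $T^{(q)}_e$ when $x$ is expanded in the basis $\{T^{(q)}_{\mathbf{w}}\}$, since $\chi_0^{(q)}$ projects onto $\mathbb{C}_q^{(0)}[W] = \mathbb{C}1 = \mathbb{C}\, T^{(q)}_e$.

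For part (1), I would compute $T^{(q)}_{\mathbf{v}^{-1}} T^{(q)}_{\mathbf{w}}$ and extract the coefficient of $T^{(q)}_e$. The cleanest route is an inductive argument on $|\mathbf{v}|$: writing $\mathbf{v}^{-1} = s\,\mathbf{u}$ reduced, apply \eqref{eq:MultiplicationRule} to peel off the leftmost generator and reduce to a shorter case. When $\mathbf{v} = \mathbf{w}$ one checks directly that the identity term $T^{(q)}_e$ appears with coefficient $1$; when $\mathbf{v} \neq \mathbf{w}$ one argues that no basis element of length zero can arise. Alternatively, and perhaps more transparently, I would note that $T^{(q)}_{\mathbf{v}^{-1}} T^{(q)}_{\mathbf{w}} = (T^{(q)}_{\mathbf{v}})^* T^{(q)}_{\mathbf{w}}$ is a sum of terms $T^{(q)}_{\mathbf{z}}$, and that $\mathbf{z} = e$ can occur only if the reduced word for $\mathbf{w}$ can be obtained from that of $\mathbf{v}$ by the cancellation process governed by \eqref{eq:MultiplicationRule}, forcing $\mathbf{v} = \mathbf{w}$; tracking coefficients then yields the value $1$. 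This amounts to the statement that the generators $T^{(q)}_{\mathbf{w}}$ are orthonormal with respect to the sesquilinear form $(x,y) \mapsto \tau_q(x^*y)$.

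Parts (2) and (3) follow formally from (1). For faithfulness, I would expand an arbitrary nonzero $x = \sum_{\mathbf{w}} c_{\mathbf{w}} T^{(q)}_{\mathbf{w}}$ and compute $\tau_q(x^*x) = \sum_{\mathbf{v},\mathbf{w}} \overline{c_{\mathbf{v}}} c_{\mathbf{w}} \, \tau_q(T^{(q)}_{\mathbf{v}^{-1}} T^{(q)}_{\mathbf{w}}) = \sum_{\mathbf{w}} |c_{\mathbf{w}}|^2$ by \eqref{eq:TraceMultiplication}, which is strictly positive whenever some $c_{\mathbf{w}} \neq 0$; the normalization $\tau_q(1) = 1$ is immediate. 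For the trace property, by bilinearity it suffices to verify $\tau_q(T^{(q)}_{\mathbf{v}} T^{(q)}_{\mathbf{w}}) = \tau_q(T^{(q)}_{\mathbf{w}} T^{(q)}_{\mathbf{v}})$ on basis elements. Here I would again use that $\tau_q(T^{(q)}_{\mathbf{a}} T^{(q)}_{\mathbf{b}})$ is the $T^{(q)}_e$-coefficient of the product, and invoke part (1) with the substitution $\mathbf{a} = \mathbf{v}$, $\mathbf{b} = \mathbf{w}$: applying the involution, $\tau_q(T^{(q)}_{\mathbf{v}} T^{(q)}_{\mathbf{w}}) = \overline{\tau_q((T^{(q)}_{\mathbf{v}} T^{(q)}_{\mathbf{w}})^*)} = \overline{\tau_q(T^{(q)}_{\mathbf{w}^{-1}} T^{(q)}_{\mathbf{v}^{-1}})}$, and comparing both sides against the symmetric criterion from \eqref{eq:TraceMultiplication} (nonzero exactly when $\mathbf{v} = \mathbf{w}^{-1}$, with value $1$) gives the claim, using that the $p_s(q)$ are real so $\tau_q$ takes real values on the relevant products.

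The main obstacle is part (1): unlike the undeformed group-algebra case, the product $T^{(q)}_{\mathbf{v}^{-1}} T^{(q)}_{\mathbf{w}}$ picks up lower-order correction terms $p_s(q) T^{(q)}_{\mathbf{w}}$ each time a cancellation occurs, so one must verify that these corrections never contribute an extra $T^{(q)}_e$ term in the case $\mathbf{v} \neq \mathbf{w}$, and that the leading contribution in the case $\mathbf{v} = \mathbf{w}$ is exactly $1$. The induction must be set up carefully to keep track of lengths; the cleanest bookkeeping is to prove the slightly stronger statement that $T^{(q)}_{\mathbf{v}^{-1}} T^{(q)}_{\mathbf{w}}$ lies in $\operatorname{Span}\{T^{(q)}_{\mathbf{z}} : \mathbf{z} \leq_R \mathbf{v}^{-1}\mathbf{w} \text{ in an appropriate sense}\}$, which immediately rules out $T^{(q)}_e$ unless $\mathbf{v}^{-1}\mathbf{w} = e$. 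Once the inductive step is correctly formulated, the remaining computations are routine.
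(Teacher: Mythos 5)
Your three-part plan coincides with the paper's own proof: part (1) by induction on $|\mathbf{v}|$ using \eqref{eq:MultiplicationRule}, and parts (2) and (3) derived formally from \eqref{eq:TraceMultiplication}; your arguments for (2) and (3) are correct and essentially identical to the paper's. However, in part (1) -- the only part with real content -- both concrete mechanisms you propose break down. Peeling the \emph{leftmost} generator of $\mathbf{v}^{-1}$, i.e.\ writing $\mathbf{v}^{-1}=s\mathbf{u}$ reduced and $T^{(q)}_{\mathbf{v}^{-1}}T^{(q)}_{\mathbf{w}}=T^{(q)}_s\bigl(T^{(q)}_{\mathbf{u}}T^{(q)}_{\mathbf{w}}\bigr)$, does not reduce the claim to a shorter instance of the same statement: a direct check with \eqref{eq:MultiplicationRule} shows that the $T^{(q)}_e$-coefficient of $T^{(q)}_s y$ equals the $T^{(q)}_s$-coefficient of $y$ (the diagonal correction $p_s(q)T^{(q)}_{\mathbf{z}}$ occurs only for those $\mathbf{z}$ starting with $s$, hence never for $\mathbf{z}=e$), so you are left needing the coefficient of $T^{(q)}_s$ in $T^{(q)}_{\mathbf{u}}T^{(q)}_{\mathbf{w}}$, about which the induction hypothesis (a statement about $T^{(q)}_e$-coefficients only) says nothing. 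Your fallback ``stronger statement'' is false as stated: $T^{(q)}_sT^{(q)}_s = 1 + p_s(q)T^{(q)}_s$ has $\mathbf{v}^{-1}\mathbf{w}=e$ but support $\{e,s\}$, and $s\nleq_R e$; the phrase ``in an appropriate sense'' is exactly where the missing content sits, since any correct support statement must accommodate these diagonal correction terms.

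The gap is repaired by peeling on the side adjacent to $T^{(q)}_{\mathbf{w}}$. Write $\mathbf{v}=s\mathbf{u}$ reduced, so that $T^{(q)}_{\mathbf{v}^{-1}}=T^{(q)}_{\mathbf{u}^{-1}}T^{(q)}_s$ (take adjoints of $T^{(q)}_sT^{(q)}_{\mathbf{u}}=T^{(q)}_{s\mathbf{u}}$), and hence $T^{(q)}_{\mathbf{v}^{-1}}T^{(q)}_{\mathbf{w}}=T^{(q)}_{\mathbf{u}^{-1}}\bigl(T^{(q)}_sT^{(q)}_{\mathbf{w}}\bigr)$. Now \eqref{eq:MultiplicationRule} applies to $T^{(q)}_sT^{(q)}_{\mathbf{w}}$, and every resulting term is of the form covered by the induction hypothesis: if $|s\mathbf{w}|>|\mathbf{w}|$, then $\tau_q(T^{(q)}_{\mathbf{v}^{-1}}T^{(q)}_{\mathbf{w}})=\tau_q(T^{(q)}_{\mathbf{u}^{-1}}T^{(q)}_{s\mathbf{w}})$, which is $1$ precisely when $\mathbf{u}=s\mathbf{w}$, i.e.\ when $\mathbf{v}=\mathbf{w}$, and $0$ otherwise; if $|s\mathbf{w}|<|\mathbf{w}|$, one gets $\tau_q(T^{(q)}_{\mathbf{u}^{-1}}T^{(q)}_{s\mathbf{w}})+p_s(q)\,\tau_q(T^{(q)}_{\mathbf{u}^{-1}}T^{(q)}_{\mathbf{w}})$, and the correction term vanishes because $\mathbf{u}=\mathbf{w}$ would force $|s\mathbf{w}|=|s\mathbf{u}|=|\mathbf{u}|+1>|\mathbf{w}|$ (as $s\mathbf{u}$ is reduced), contradicting the case assumption. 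With this one change -- inducting from the side where the multiplication rule feeds directly into products of the same shape -- your proof of (1) closes, and the rest of your argument then goes through exactly as the paper's does.
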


\begin{proof}
\emph{About (1)}: The first statement is obtained by induction on the word length of $\mathbf{v} \in W$, using the multiplication identity from~\eqref{eq:MultiplicationRule}.

\smallskip

\emph{About (2):} Since $1 = T_e^{(q)}$, it follows immediately that $\tau_q(1) = 1$. Now consider an arbitrary element $x \in \mathbb{C}_q[W]$ of the form $x = \sum_{\mathbf{w} \in W} x(\mathbf{w}) T_{\mathbf{w}}^{(q)}$,
with complex coefficients $x(\mathbf{w}) \in \mathbb{C}$. By the identity in~\eqref{eq:TraceMultiplication}, we compute
\[
\tau_q(x^* x) = \sum_{\mathbf{v}, \mathbf{w} \in W} \overline{x(\mathbf{v})} x(\mathbf{w}) \, \tau_q(T_{\mathbf{v}^{-1}}^{(q)} T_{\mathbf{w}}^{(q)}) = \sum_{\mathbf{w} \in W} |x(\mathbf{w})|^2,
\]
which is strictly positive whenever $x \neq 0$. This proves that $\tau_q$ is a faithful state.

\smallskip

\emph{About (3):} Again using~\eqref{eq:TraceMultiplication}, let $x, y \in \mathbb{C}_q[W]$ be given by $x = \sum_{\mathbf{w} \in W} x(\mathbf{w}) T_{\mathbf{w}}^{(q)}$ and $y = \sum_{\mathbf{w} \in W} y(\mathbf{w}) T_{\mathbf{w}}^{(q)}$. Then we compute
\begin{eqnarray}
\nonumber
\tau_{q}(xy) &=& \sum_{\mathbf{v},\mathbf{w}\in W}x(\mathbf{v})y(\mathbf{w})\tau_{q}(T_{\mathbf{v}}^{(q)}T_{\mathbf{w}}^{(q)}) \\
\nonumber
&=& \sum_{\mathbf{w}\in W}x(\mathbf{w}^{-1})y(\mathbf{w}) \\
\nonumber
&=& \sum_{\mathbf{v},\mathbf{w}\in W}y(\mathbf{v})x(\mathbf{w})\tau_{q}(T_{\mathbf{v}}^{(q)}T_{\mathbf{w}}^{(q)}) \\
&=& \tau_{q}(yx),
\end{eqnarray}
since the sum is symmetric under relabeling. Hence, $\tau_q(xy) = \tau_q(yx)$, establishing the traciality of $\tau_q$.
\end{proof}

Let $(W, S)$ be a Coxeter system of finite rank, and let $q \in \mathbb{R}_{>0}^{(W, S)}$ be a multi-parameter. For notational convenience, we denote, for any $x \in \mathbb{C}_{q}[W]$ and $\mathbf{w} \in W$, $x(\mathbf{w}) := \tau_q(T_{\mathbf{w}^{-1}}^{(q)} x) \in \mathbb{C}$,
so that, by Lemma~\ref{BasicLemma}, each element $x \in \mathbb{C}_{q}[W]$ admits the expansion $x = \sum_{\mathbf{w} \in W} x(\mathbf{w}) T_{\mathbf{w}}^{(q)}$.

Lemma~\ref{FiltrationLemma} places us within the framework of~\cite[Section 1]{OzawaRieffel05}. The faithful tracial state $\tau_q$ allows to construct the associated GNS-Hilbert space $L^{2}(\mathbb{C}_{q}[W], \tau_q)$, which canonically identifies with the Hilbert space $\ell^{2}(W)$ of square-summable complex-valued functions on $W$. The corresponding GNS-representation is faithful so that $\mathbb{C}_q[W]$ can be regarded as a $*$-subalgebra of $\mathcal{B}(\ell^2(W))$. The corresponding action is given by
\[
T^{(q)}_s \delta_{\mathbf{w}} =
\begin{cases}
\delta_{s\mathbf{w}}, & |s\mathbf{w}| > |\mathbf{w}|, \\
\delta_{s\mathbf{w}} + p_s(q)\delta_{\mathbf{w}}, & |s\mathbf{w}| < |\mathbf{w}|.
\end{cases}
\]
for $s\in S$, $\mathbf{w}\in W$, where $(\delta_{\mathbf{w}})_{\mathbf{w}\in W}\subseteq\ell^{2}(W)$ is the canonical orthonormal basis of $\ell^{2}(W)$.

\begin{definition}
Let $(W,S)$ be a finite rank Coxeter system and $q \in \mathbb{R}_{>0}^{(W,S)}$. The corresponding \emph{Hecke C$^*$-algebra} is defined as the norm closure
\[
C^*_{r,q}(W) := \overline{\mathbb{C}_q[W]}^{\|\cdot\|} \subseteq \mathcal{B}(\ell^2(W)).
\]
\end{definition}

For $q = 1$, the operator $T^{(1)}_{\mathbf{w}}$ coincides with the left regular representation operator associated with $\mathbf{w} \in W$, so that $\mathbb{C}_1[W] = \mathbb{C}[W]$ and $C^{\ast}_{r,1}(W) \cong C^*_r(W)$, where $\mathbb{C}[W]$ and $C_r^\ast(W)$ respectively are the group algebra and the reduced group C$^\ast$-algebra of $W$. Thus, Hecke C$^*$-algebras can be viewed as deformations of reduced group C$^*$-algebras of Coxeter groups.\\

Following the framework in \cite{OzawaRieffel05}, define for each $n \in \mathbb{N}$ the orthogonal projection $P_n \in \mathcal{B}(\ell^2(W))$ onto the subspace spanned by $\{\delta_{\mathbf{w}} \mid |\mathbf{w}| = n\}$, and let
\[
D_S := \sum_{n \in \mathbb{N}} n P_n
\]
be the induced unbounded operator on $\ell^{2}(W)$. Then the tuple $(\mathbb{C}_q[W], \ell^2(W), D_S)$ forms a \emph{spectral triple} on $C^*_{r,q}(W)$ in the sense of Connes \cite{Connes89,Connes94}, i.e., $D_{S}$ is a densely defined self-adjoint operator for which $(1+D_{S}^{2})^{-\frac{1}{2}}$ is compact, whose domain is invariant under multiplication with elements in $\mathbb{C}_{q}[W]$, and such that the commutator $[D_{S},x]=D_{S}x-xD_{S}$ is bounded for every $x\in\mathbb{C}_{q}[W]$.

We define a seminorm $L_S^{(q)}$ on $\mathfrak{sa}(C^*_{r,q}(W))$ by
\begin{equation}
L_S^{(q)}(x) :=
\begin{cases}
\|[D_S, x]\|, & \text{if } x \in \mathbb{C}_q[W] \cap \mathfrak{sa}(C^*_{r,q}(W)), \\
\infty, & \text{otherwise}.
\end{cases}
\label{eq:SeminormDefinition}
\end{equation}
For $x \in \mathbb{C}_q[W]$, we find
\begin{eqnarray}
\|[D_S, x] \delta_e\|_2^2 = \sum_{\mathbf{w} \in W} |\mathbf{w}|^2 |x(\mathbf{w})|^2,\label{eq:2-Norm}
\end{eqnarray}
so $L_S^{(q)}(x) = 0$ if and only if $x \in \mathbb{R}1$, and the domain $\operatorname{dom}(L_S^{(q)}) = \mathbb{C}_q[W] \cap \mathfrak{sa}(C^*_{r,q}(W))$ is dense. Thus, the first two conditions of Definition~\ref{CQMSDefinition} are satisfied. This leads us to the following central question of this section:

\begin{question} \label{MainQuestion}
Let $(W,S)$ be a Coxeter system and $q \in \mathbb{R}_{>0}^{(W,S)}$. Under what conditions does the pair $(C^*_{r,q}(W), L_S^{(q)})$ define a compact quantum metric space? That is, when does the associated Monge--Kantorovich metric
\[
d_S(\varphi, \psi) := \sup\left\{ |\varphi(x) - \psi(x)| \,\middle|\, x \in \mathbb{C}_q[W] \cap \mathfrak{sa}(C^*_{r,q}(W)),\, \|[D_S, x]\| \leq 1 \right\}
\]
metrize the weak$^*$-topology on $\mathcal{S}(C^*_{r,q}(W))$?
\end{question}

When the seminorm $L_S^{(q)}$ defined in \eqref{eq:SeminormDefinition} is a Lip-norm, the spectral triple $(\mathbb{C}_q[W], \ell^2(W), D_S)$ is referred to as a \emph{spectral metric space} or \emph{metric spectral triple}.

\vspace{3mm}


\subsection{The Haagerup-type Condition for the Filtration \texorpdfstring{$\mathbb{C}_{q}^{(n)}[W]$}{Cq⁽ⁿ⁾[W]}}

Inspired by Haagerup’s seminal work on reduced group $\mathrm{C}^{\ast}$-algebras of free groups \cite{Haagerup78}, Ozawa and Rieffel introduced in \cite{OzawaRieffel05} the \emph{Haagerup-type condition} in the setting of filtered $\mathrm{C}^{\ast}$-algebras. They showed that this condition ensures that the Monge–Kantorovich metric induced by the associated Dirac operator metrizes the weak$^{\ast}$-topology on the state space. Furthermore, based on the work of Connes in \cite{Connes94}, they verified that this condition is satisfied for natural filtrations of group $\mathrm{C}^{\ast}$-algebras associated with word-hyperbolic groups. The criterion has since been employed in various contexts, e.g., \cite{AguilarHartglassPenneys22}, \cite{ToyotaYang23}, and \cite{AustadKyed25}.

The objective of this subsection is to establish the following characterization of the Haagerup-type condition for the filtration introduced in Lemma~\ref{FiltrationLemma}, specialized to the right-angled setting.

\begin{theorem} \label{HaagerupTypeCondition}
Let $(W, S)$ be a finite rank, right-angled Coxeter system, and let $\Gamma$ denote the finite, undirected, simplicial graph with vertex set $V\Gamma := S$ and edge set $E\Gamma := \{(s, t) \in S \times S \mid m_{s,t} = 2\}$. Then the following two statements hold:
\begin{enumerate}
    \item If $\Gamma$ contains no induced square, then there exists a constant $K > 0$ such that
\[
\|P_i x P_j\| \leq K C_q \|x \delta_e\|_2
\]
for all $q \in \mathbb{R}_{>0}^{(W, S)}$, $i, j, n \in \mathbb{N}$, $x \in \chi_n(\mathbb{C}_q[W])$, where $C_q := \max_{\Gamma_0 \in \mathrm{Cliq}(\Gamma)} \left( \prod_{t \in V \Gamma_0} |p_t(q)| \right)$. \label{FirstStatement}
    
    \item If $\Gamma$ contains an induced square and $q \in \mathbb{R}_{>0}^{(W, S)}$, then there does not exist any constant $K > 0$ such that $\|P_i x P_j\| \leq K \|x \delta_e\|_2$ holds for all $i, j, n \in \mathbb{N}$ and $x \in \chi_n(\mathbb{C}_q[W])$.
\end{enumerate}
\end{theorem}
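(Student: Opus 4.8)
The plan is to \emph{disprove} the uniform bound by exhibiting, for the fixed parameter $q$, a sequence of homogeneous elements $x_M \in \chi_{2M}(\mathbb{C}_q[W])$ together with indices $i=i(M)$, $j=j(M)$ for which the ratio $\|P_i x_M P_j\|/\|x_M\delta_e\|_2$ tends to infinity. The entire construction takes place inside the flat $\mathbb{Z}^2$ supplied by the induced square.

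\textbf{Step 1 (Reduction to a special subgroup).} The induced square on $s_1,s_2,s_3,s_4\in S$ means that $\{s_1,s_3\}$ and $\{s_2,s_4\}$ each generate an infinite dihedral group, while every element of $\langle s_1,s_3\rangle$ commutes with every element of $\langle s_2,s_4\rangle$; hence $W' := W_{\{s_1,s_2,s_3,s_4\}}\cong D_\infty\times D_\infty$. First I would record that the inclusion $\mathbb{C}_q[W']\subseteq\mathbb{C}_q[W]$ is compatible with all the relevant structure: $\ell^2(W')\subseteq\ell^2(W)$ is invariant under $\mathbb{C}_q[W']$, the word length of $W'$ agrees with that of $W$ on $W'$ by convexity of standard parabolic subgroups (cf.\ \cite[Theorem 4.1.6]{Davis08}), so $P_i^W$ restricts to $P_i^{W'}$, and $\|\cdot\|_2$ is unchanged. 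It therefore suffices to produce the offending element and test vector inside $\mathbb{C}_q[W']$ and $\ell^2(W')$.

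\textbf{Step 2 (The positive cone of the flat and the test vector).} Put $t_1:=s_1s_3$ and $t_2:=s_2s_4$; these are infinite-order elements generating commuting copies of $\mathbb{Z}$, and $t_1^{k}t_2^{l}$ is a reduced word of length $2(k+l)$ for all $k,l\ge 0$. For $M\in\mathbb{N}$ I set
\[
x_M := \sum_{k=0}^{M} T^{(q)}_{t_1^{k}t_2^{M-k}} \in \chi_{2M}(\mathbb{C}_q[W']),
\]
which is supported on $M+1$ distinct reduced words of length $2M$, so $\|x_M\delta_e\|_2=\sqrt{M+1}$. As test vector I take the normalised sum over a parallel anti-diagonal,
\[
\xi_M := \frac{1}{\sqrt{M+1}}\sum_{p=0}^{M}\delta_{t_1^{p}t_2^{M-p}} \in P_{2M}\,\ell^2(W'),
\]
a unit vector, lying in the same positive cone.

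\textbf{Step 3 (Coherent addition).} Since $t_1^{k}t_2^{M-k}\cdot t_1^{p}t_2^{M-p}=t_1^{k+p}t_2^{2M-k-p}$ is a reduced concatenation, I would show $x_M\xi_M=\tfrac{1}{\sqrt{M+1}}\sum_{A}\mu(A)\,\delta_{t_1^{A}t_2^{2M-A}}$, where $\mu(A)=\#\{k:k+p=A,\ 0\le k,p\le M\}$ is the triangular multiplicity peaking at $\mu(M)=M+1$. A direct count gives $\sum_A\mu(A)^2\asymp M^3$, so $\|x_M\xi_M\|_2\asymp M$. As every summand of $x_M\xi_M$ has length $4M$, this vector lies in $P_{4M}\ell^2(W')$; taking $i=4M$ and $j=2M$ yields $\|P_ix_MP_j\|\ge\|x_M\xi_M\|_2\asymp M$, whence $\|P_ix_MP_j\|/\|x_M\delta_e\|_2\asymp\sqrt{M}\to\infty$, contradicting the existence of a uniform constant $K$.

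\textbf{Main obstacle.} The delicate point is the interaction with the $q$-deformation: in general the action of $T^{(q)}_{\mathbf w}$ produces, besides the translated basis vector, lower-order correction terms weighted by the $p_s(q)$, and these arise precisely when length cancellation occurs. A naive attempt placing the test vector in the \emph{opposite} cone (exploiting cancellation for coherence) would require controlling these corrections and their signs uniformly in $k$ and $M$, which I expect to be the genuine difficulty. The device that removes it is to stay entirely within the positive cone: there the product lengths add, so at each step the multiplication rule \eqref{eq:MultiplicationRule} contributes only the length-increasing term and $T^{(q)}_{\mathbf w}\delta_{\mathbf v}=\delta_{\mathbf{wv}}$ \emph{independently of $q$}. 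This makes Step 3 purely combinatorial and valid for every multi-parameter $q$ simultaneously; the coherence needed for the quadratic growth $\|x_M\xi_M\|_2\asymp M$ is supplied not by cancellation but by the abundance of lattice paths $(k,p)\mapsto k+p$ reaching the same point in the abelian flat.
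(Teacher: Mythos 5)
There is a genuine gap: you have only addressed statement (2) of the theorem. Your proposal opens with ``the plan is to \emph{disprove} the uniform bound,'' but the theorem is a two-part dichotomy, and its substantial content is statement (1): the existence of a \emph{uniform} constant $K>0$ (valid for all multi-parameters $q$ and all $i,j,n$) when $\Gamma$ contains no induced square. Nothing in your proposal touches this. In the paper, statement (1) is where all the machinery lives: absence of induced squares gives word-hyperbolicity of $W$ via Moussong's theorem, which feeds into a combinatorial lemma (Lemma \ref{SetSize}) bounding, uniformly in $i$, $\mathbf{x}$, $\mathbf{y}$, the number of admissible factorizations $\mathbf{x} = \mathbf{v}_1 V(\Gamma_1)\mathbf{v}_2$ compatible with prescribed length constraints; this is then combined with the creation/diagonal/annihilation decomposition of the operators $T^{(q)}_{\mathbf{w}}$ (Proposition \ref{GeneratorDecomposition}, imported from \cite{CaspersKlisseLarsen21}) and a Cauchy--Schwarz estimate to produce the bound $\|P_i x P_j\| \leq K C_q \|x\delta_e\|_2$. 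None of this can be recovered from a counterexample construction, so as a proof of the theorem your proposal is at most half complete -- and it is the easier half.

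For what it is worth, the half you did prove is correct and is essentially the paper's own argument for statement (2). The paper also works in the flat generated by the commuting infinite dihedral groups, takes $x_n = \frac{1}{n}\sum_{i=1}^{n} T^{(q)}_{(uv)^i(st)^{n-i}}$ supported on an anti-diagonal, pairs it with a test vector $\xi_n$ supported on a parallel anti-diagonal, and exploits exactly the observation you isolate as the key device: in the length-additive regime the Hecke action satisfies $T^{(q)}_{\mathbf{w}}\delta_{\mathbf{v}} = \delta_{\mathbf{wv}}$ with no $q$-correction terms, so the products pile up coherently on lattice points of the flat with multiplicities of order $n$, giving a ratio growing like $\sqrt{n}$. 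Your bookkeeping (triangular multiplicities $\mu(A)$ with $\sum_A \mu(A)^2 \asymp M^3$, hence $\|x_M\xi_M\|_2 \asymp M$ against $\|x_M\delta_e\|_2 = \sqrt{M+1}$) is sound, and your Step 1 reduction to the special subgroup, while not strictly necessary (the computation can be carried out directly in $\ell^2(W)$ since word lengths of $W_{\{s_1,s_2,s_3,s_4\}}$ agree with those of $W$), is harmless. But to complete the theorem you must supply a proof of statement (1), which requires an entirely different set of ideas.
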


The proof of Theorem~\ref{HaagerupTypeCondition} relies on the following combinatorial estimate.

Given a clique $\Gamma_0 \in \mathrm{Cliq}(\Gamma)$, define $V(\Gamma_0) := \prod_{s \in \Gamma_0} s \in W$.

\begin{lemma} \label{SetSize}
Let $(W, S)$ be a finite rank, right-angled Coxeter system, and suppose that the graph $\Gamma$ defined in Theorem~\ref{HaagerupTypeCondition} contains no induced square. For $i \in \mathbb{N}$ and $\mathbf{x}, \mathbf{y} \in W$, define $R_{\mathbf{x}, \mathbf{y}}(i)$ to be the number of tuples $(\mathbf{u}, \Gamma_1, \mathbf{v}_1, \mathbf{v}_2, \Gamma_2, \mathbf{w}_1, \mathbf{w}_2)$ where $\mathbf{u}, \mathbf{v}_1, \mathbf{v}_2, \mathbf{w}_1, \mathbf{w}_2 \in W$ and $\Gamma_1, \Gamma_2 \in \mathrm{Cliq}(\Gamma)$ satisfy the conditions:
\[
\mathbf{x} = \mathbf{v}_1 V(\Gamma_1) \mathbf{v}_2, \quad \mathbf{y} = (\mathbf{w}_1 \mathbf{w}_2)^{-1} \mathbf{u}, \quad \mathbf{v}_1, \mathbf{w}_1 \leq \mathbf{u},
\]
and
\[
\begin{aligned}
|\mathbf{u}| &= i, \\
|\mathbf{v}_1| &= \frac{i + |\mathbf{x}| - |\mathbf{y}| - \# V\Gamma_1}{2}, \\
|\mathbf{v}_2| &= |\mathbf{x}| - \frac{i + |\mathbf{x}| - |\mathbf{y}| + \# V\Gamma_1}{2}, \\
|\mathbf{w}_1| &= \frac{i + |\mathbf{x}| - |\mathbf{y}| - \# V\Gamma_2}{2}, \\
|\mathbf{w}_2| &= |\mathbf{x}| - \frac{i + |\mathbf{x}| - |\mathbf{y}| + \# V\Gamma_2}{2}.
\end{aligned}
\]
Then there exists a constant $K > 0$ such that $R_{\mathbf{x}, \mathbf{y}}(i) \leq K$ for all $i \in \mathbb{N}$ and $\mathbf{x}, \mathbf{y} \in W$.
\end{lemma}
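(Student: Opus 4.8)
The plan is to fix the discrete data first and then establish a rigidity statement for the remaining data by exploiting word-hyperbolicity. Since $(W,S)$ has finite rank, the set $\mathrm{Cliq}(\Gamma)$ is finite, so the pair $(\Gamma_1,\Gamma_2)$ ranges over a set whose cardinality depends only on $(W,S)$. It therefore suffices to bound, for each fixed pair $(\Gamma_1,\Gamma_2)$, the number of admissible tuples $(\mathbf{u},\mathbf{v}_1,\mathbf{v}_2,\mathbf{w}_1,\mathbf{w}_2)$ uniformly in $i,\mathbf{x},\mathbf{y}$; note that once $\Gamma_1,\Gamma_2$ are fixed, the lengths $|\mathbf{v}_1|,|\mathbf{v}_2|,|\mathbf{w}_1|,|\mathbf{w}_2|$ are all determined by $i,|\mathbf{x}|,|\mathbf{y}|$.

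First I would extract the geometric content of the length constraints. A direct computation gives $|\mathbf{v}_1|+\#V\Gamma_1+|\mathbf{v}_2|=|\mathbf{x}|$, so $\mathbf{x}=\mathbf{v}_1V(\Gamma_1)\mathbf{v}_2$ is a reduced factorization; in particular $\mathbf{v}_2=V(\Gamma_1)^{-1}\mathbf{v}_1^{-1}\mathbf{x}$ is determined by $\mathbf{v}_1$, which is a prefix of $\mathbf{x}$ after which the clique element $V(\Gamma_1)$ appears. Symmetrically, the length identities force a cancellation of exactly $2|\mathbf{w}_2|$ in the product $\mathbf{u}=\mathbf{w}_1\mathbf{w}_2\mathbf{y}$, so that $\mathbf{w}_2^{-1}\leq\mathbf{y}$ and $\mathbf{u}=\mathbf{w}_1\cdot(\mathbf{w}_2\mathbf{y})$ is reduced, with $\mathbf{w}_1$ a prefix of length $|\mathbf{w}_1|$ and suffix $\mathbf{w}_2\mathbf{y}$ of length $i-|\mathbf{w}_1|$. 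Consequently the whole tuple is determined by the triple $(\mathbf{v}_1,\mathbf{w}_1,\mathbf{w}_2)$ subject to: $\mathbf{v}_1\leq\mathbf{x}$ with $V(\Gamma_1)\leq\mathbf{v}_1^{-1}\mathbf{x}$; $\mathbf{w}_2^{-1}\leq\mathbf{y}$; and $\mathbf{v}_1\leq\mathbf{u}:=\mathbf{w}_1\mathbf{w}_2\mathbf{y}$ with $|\mathbf{u}|=i$. This reduces the statement to bounding the number of such triples.

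The heart of the argument, and the step I expect to be the main obstacle, is controlling the coupling in the last condition $\mathbf{v}_1\leq\mathbf{u}$. Individually the numbers of admissible $\mathbf{v}_1$ and of admissible $\mathbf{w}_1$ are \emph{not} bounded: using the heap description of reduced words in right-angled Coxeter groups from \cite{CaspersKlisseLarsen21}, the length-$\ell$ prefixes of a fixed element correspond to order ideals of fixed size in a poset whose width is at most the clique number of $\Gamma$ (two occurrences are incomparable only if they are distinct commuting generators), and their number grows polynomially in $\ell$. Hence the bound cannot come from either factor alone and must arise from the fact that $\mathbf{v}_1$, being a common prefix of $\mathbf{x}$ and $\mathbf{u}$, and the prescribed suffix $\mathbf{w}_2\mathbf{y}$ of $\mathbf{u}$, simultaneously constrain $\mathbf{u}$. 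At this point I would invoke word-hyperbolicity through Theorem \ref{MoussongTheorem}: the no-induced-square hypothesis supplies a uniform thinness constant $\delta$ for geodesic quadrilaterals in $W$. Applying the four-point inequality of Theorem \ref{MoussongTheorem} to the configuration $e,\mathbf{u},\mathbf{x},\mathbf{u}\mathbf{y}^{-1}$ bounds the relevant Gromov products and thereby forces the branch point $\mathbf{v}_1$ to lie within distance $O(\delta)$ of the meet $\mathbf{x}\wedge\mathbf{u}$, and likewise localizes $\mathbf{w}_1$. Since balls of radius $O(\delta)$ have cardinality bounded solely in terms of $(W,S)$, the number of admissible triples with $(\Gamma_1,\Gamma_2)$ fixed is uniformly bounded.

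It then remains to convert this localization into the combinatorial count, which I would do by showing that any two admissible triples sharing the same $(\Gamma_1,\Gamma_2)$ differ only by a modification supported on a bounded-length region near the center of the quadrilateral. Conceptually this is precisely where the dichotomy of Theorem \ref{HaagerupTypeCondition} originates: an induced square yields two disjoint infinite commuting dihedral special subgroups, hence a flat $\mathbb{Z}^2$-region in which the length constraints can be satisfied in unboundedly many ways (the content of part (2)), whereas its absence makes $W$ hyperbolic and collapses these choices to a bounded set. Summing the bounded fiber sizes over the finitely many pairs $(\Gamma_1,\Gamma_2)$ then produces the desired constant $K$, depending only on $(W,S)$.
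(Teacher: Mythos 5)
Your preliminary reductions are correct, and in places sharper than what the paper records: the length identities do force $\mathbf{x}=\mathbf{v}_1V(\Gamma_1)\mathbf{v}_2$ to be a reduced factorization, force $\mathbf{w}_2^{-1}\leq\mathbf{y}$ and $\mathbf{u}=\mathbf{w}_1\cdot(\mathbf{w}_2\mathbf{y})$ reduced, and reduce the problem to counting triples $(\mathbf{v}_1,\mathbf{w}_1,\mathbf{w}_2)$ for each of the finitely many pairs $(\Gamma_1,\Gamma_2)$; you also correctly identify that the bound must come from the coupling $\mathbf{v}_1\leq\mathbf{u}$. But the step you call the heart of the argument is false: the constraints do \emph{not} force $\mathbf{v}_1$ to lie within $O(\delta)$ of the meet $\mathbf{x}\wedge\mathbf{u}$. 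The length $|\mathbf{v}_1|$ is pinned by $i,|\mathbf{x}|,|\mathbf{y}|,\#V\Gamma_1$ and can be far smaller than the Gromov product $(\mathbf{x}\mid\mathbf{u})_e$, so $\mathbf{v}_1$ can sit arbitrarily far below the meet. Concretely, take $W=\langle s_1,s_2,s_3\mid s_j^2=e\rangle$ (so $\Gamma$ is edgeless, no induced squares, $\delta$ fixed), $\Gamma_1=\Gamma_2=\emptyset$, and for $m$ large: $\mathbf{x}=(s_1s_2)^m$, $\mathbf{z}=s_3s_1s_3$, $\mathbf{u}=\mathbf{x}\mathbf{z}$, $\mathbf{v}_1=\mathbf{w}_1=s_1s_2$, $\mathbf{v}_2=(s_1s_2)^{m-1}$, $\mathbf{w}_2=(s_2s_3)^{m-1}$, $\mathbf{y}=(s_3s_2)^{m-1}(s_1s_2)^{m-1}\mathbf{z}$. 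All length constraints of the lemma hold, yet $\mathbf{x}\wedge\mathbf{u}=\mathbf{x}$ and $d(\mathbf{v}_1,\mathbf{x}\wedge\mathbf{u})=2m-2\to\infty$. Moreover, even granting some localization of $\mathbf{v}_1$ and $\mathbf{w}_1$ relative to $\mathbf{u}$, your count is circular: $\mathbf{u}=\mathbf{w}_1\mathbf{w}_2\mathbf{y}$ varies with the tuple, so placing $\mathbf{v}_1,\mathbf{w}_1$ in balls around $\mathbf{u}$-dependent centers bounds nothing until the set of admissible $\mathbf{u}$ (equivalently, of pairs $(\mathbf{w}_1,\mathbf{w}_2)$) has itself been bounded.

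What closes the argument — and is the paper's actual proof — is pairwise closeness of the data of \emph{two} admissible tuples, anchored at the fixed words $\mathbf{x}$ and $\mathbf{y}$ rather than at any canonical point. The four-point inequality applied to $(e,\mathbf{x},\mathbf{v}_1,\mathbf{v}_1')$, combined with $\mathbf{v}_1,\mathbf{v}_1'\leq\mathbf{x}$ and the length formulas, yields $|\mathbf{v}_1^{-1}\mathbf{v}_1'|\leq\#S+\delta$ (bigon thinness: prefixes of a fixed element of nearly equal length are uniformly close); the same applied to prefixes $\mathbf{w}_2^{-1},(\mathbf{w}_2')^{-1}$ of $\mathbf{y}$ handles the $\mathbf{w}_2$'s; and — this is the coupling your mechanism misses — applied to $(e,\mathbf{u},\mathbf{v}_1,\mathbf{w}_1)$ \emph{within a single tuple}, since $\mathbf{v}_1$ and $\mathbf{w}_1$ are prefixes of the same $\mathbf{u}$ with prescribed lengths differing by at most $\#S/2$, it gives $|\mathbf{v}_1^{-1}\mathbf{w}_1|\leq\#S+\delta$. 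Chaining these, every component of every admissible tuple lies within uniformly bounded distance of the corresponding component of one fixed reference tuple, and $\mathbf{u}=\mathbf{w}_1\mathbf{w}_2\mathbf{y}$ is then determined, giving $R_{\mathbf{x},\mathbf{y}}(i)\leq(\#\mathrm{Cliq}(\Gamma))^2R^4$ with $R$ the cardinality of a ball of radius $\#S+\delta$. So your outline shares the paper's scaffolding (finitely many cliques, Moussong hyperbolicity, ball-volume count), but the localization you propose is not a true statement; it must be replaced by the relative statement that any two admissible choices are close to each other.
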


\begin{proof}
Fix $i \in \mathbb{N}$ and $\mathbf{x}, \mathbf{y} \in W$. Consider $(\mathbf{u}, \Gamma_1, \mathbf{v}_1, \mathbf{v}_2, \Gamma_2, \mathbf{w}_1, \mathbf{w}_2)$ and $(\mathbf{u}', \Gamma_1', \mathbf{v}_1', \mathbf{v}_2', \Gamma_2', \mathbf{w}_1', \mathbf{w}_2')$ satisfying the above conditions. Since $\Gamma$ contains no induced square, Theorem~\ref{MoussongTheorem} implies that $W$ is word-hyperbolic. Therefore, there exists $\delta \geq 0$ such that for all $\mathbf{g}_1, \mathbf{g}_2, \mathbf{g}_3, \mathbf{g}_4 \in W$,
\[
|\mathbf{g}_1^{-1} \mathbf{g}_2| + |\mathbf{g}_3^{-1} \mathbf{g}_4| \leq \max \left\{ |\mathbf{g}_1^{-1} \mathbf{g}_3| + |\mathbf{g}_2^{-1} \mathbf{g}_4|,\; |\mathbf{g}_1^{-1} \mathbf{g}_4| + |\mathbf{g}_2^{-1} \mathbf{g}_3| \right\} + \delta.
\]

Applying this inequality yields the bound
\[
|\mathbf{x}| + |\mathbf{v}_1^{-1} \mathbf{v}_1'| \leq \max \left\{ |\mathbf{v}_1| + |\mathbf{x}^{-1} \mathbf{v}_1'|,\; |\mathbf{v}_1'| + |\mathbf{x}^{-1} \mathbf{v}_1| \right\} + \delta.
\]
From the word length constraints, we have $\mathbf{v}_1, \mathbf{v}_1' \leq \mathbf{x}$, implying
\[
|\mathbf{v}_1^{-1} \mathbf{v}_1'| \leq \frac{|\# V\Gamma_1 - \# V\Gamma_1'|}{2} + \delta \leq \# S + \delta.
\]
Analogously, $|\mathbf{v}_{2}^{-1}\mathbf{v}_{2}^{\prime}|\leq\#S+\delta$, $|\mathbf{w}_{2}^{\prime}\mathbf{w}_{2}^{-1}|\leq\#S+\delta$, and $|\mathbf{v}_{1}^{-1}\mathbf{w}_{1}|\leq\#S+\delta$.

Let $R$ denote the cardinality of the ball of radius $\#S + \delta$ in $W$. Using $\mathbf{u} = \mathbf{w}_1 \mathbf{w}_2 \mathbf{y}$, we conclude that
\[
R_{\mathbf{x}, \mathbf{y}}(i) \leq \left( \# \mathrm{Cliq}(\Gamma) \right)^2 R^4,
\]
which establishes the claim.
\end{proof}

Beyond Lemma \ref{SetSize}, the proof of Theorem \ref{HaagerupTypeCondition} relies on a suitable decomposition of the canonical basis elements in the Iwahori–Hecke algebra. This decomposition was previously employed in \cite{CaspersKlisseLarsen21} to establish Khintchine-type and Haagerup-type inequalities for right-angled Hecke C$^{\ast}$-algebras.

Let $(W, S)$ be a finite rank, right-angled Coxeter system, and let $\Gamma$ be the graph appearing in Theorem \ref{HaagerupTypeCondition}. For each $\mathbf{w} \in W$, fix a reduced expression $\mathbf{w} = s_1 \cdots s_n$ with $s_1, \ldots, s_n \in S$, and denote by $\mathcal{I}$ the collection of all such reduced expressions.

\begin{definition}[{\cite[Definition 2.3]{CaspersKlisseLarsen21}}] \label{PermutationDefinition}
Let $(W, S)$ be a finite rank, right-angled Coxeter system, and let $\mathbf{w} \in W$ admit a reduced expression $\mathbf{w} = s_1 \cdots s_n$ with $s_1, \ldots, s_n \in S$. Given integers $0 \leq l \leq n$, $0 \leq k \leq n - l$, a clique $\Gamma_0 \in \mathrm{Cliq}(\Gamma, l)$, and a pair $(\Gamma_1, \Gamma_2) \in \mathrm{Comm}(\Gamma_0)$, define (when it exists) $\sigma := \sigma_{l, k, \Gamma_0, \Gamma_1, \Gamma_2}$ to be the unique permutation of $\{1, \ldots, n\}$  satisfying:
\begin{enumerate}
    \item $s_1 \cdots s_n = s_{\sigma(1)} \cdots s_{\sigma(n)}$;
    \item $s_{\sigma(k+1)} \cdots s_{\sigma(k+l)} \in \mathcal{I}$ and $\{s_{\sigma(k+1)}, \ldots, s_{\sigma(k+l)}\} = V\Gamma_0$;
    \item $s_{\sigma(1)} \cdots s_{\sigma(k)} \in \mathcal{I}$ with $\vert s_{\sigma(1)} \cdots s_{\sigma(k)} s \vert = k-1$ for all $s \in V\Gamma_1$;
    \item $\vert s_{\sigma(1)} \cdots s_{\sigma(k)} s \vert = k+1$ for all $s \in V\mathrm{Link}(\Gamma_0) \setminus V\Gamma_1$;
    \item $s_{\sigma(k+l+1)} \cdots s_{\sigma(n)} \in \mathcal{I}$ with $\vert s s_{\sigma(k+l+1)} \cdots s_{\sigma(n)} \vert = n - k - l - 1$ for all $s \in V\Gamma_2$;
    \item $\vert s s_{\sigma(k+l+1)} \cdots s_{\sigma(n)} \vert = n - k - l + 1$ for all $s \in V\mathrm{Link}(\Gamma_0) \setminus V\Gamma_2$;
    \item $\sigma(i) < \sigma(j)$ whenever $i < j$ and $s_i = s_j$.
\end{enumerate}
When $l = 0$, we set $\Gamma_0 = \emptyset$, so condition (ii) is vacuously satisfied.
\end{definition}

The following proposition describes how each basis element $T_{\mathbf{w}}^{(q)}$ may be expressed as a sum of products involving creation, diagonal, and annihilation operators.

\begin{proposition}[{\cite[Proposition 2.6]{CaspersKlisseLarsen21}}] \label{GeneratorDecomposition}
Let $(W, S)$ be a right-angled Coxeter system, and let $\mathbf{w} \in W$ have a fixed reduced expression $\mathbf{w}=s_1 \cdots s_n$. For $\mathbf{u} \in W$, define $Q_{\mathbf{u}} \in \mathcal{B}(\ell^2(W))$ to be the orthogonal projection onto the closed subspace
\[
\overline{\mathrm{Span}}^{\|\cdot\|_2} \{ \delta_{\mathbf{v}} \mid \mathbf{v} \in W,\, \mathbf{u} \leq \mathbf{v} \} \subseteq \ell^2(W).
\]
Then the operator $T_{\mathbf{w}}^{(q)} \in \mathcal{B}(\ell^2(W))$ admits a decomposition as a finite sum of the form
\begin{equation*}
\begin{split}
T_{\mathbf{w}}^{(q)} = \sum_{l=0}^n \sum_{k=0}^{n-l} \sum_{\Gamma_0 \in \mathrm{Cliq}(\Gamma, l)} \sum_{(\Gamma_1, \Gamma_2) \in \mathrm{Comm}(\Gamma_0)} & (Q_{s_{\sigma(1)}} T_{s_{\sigma(1)}}^{(1)} Q_{s_{\sigma(1)}}^{\perp}) \cdots (Q_{s_{\sigma(k)}} T_{s_{\sigma(k)}}^{(1)} Q_{s_{\sigma(k)}}^{\perp}) \\
& \times \left( \prod_{t \in V \Gamma_0} p_t(q) \right) Q_{V(\Gamma_0)} \\
& \times (Q_{s_{\sigma(k+l+1)}}^{\perp} T_{s_{\sigma(k+l+1)}}^{(1)} Q_{s_{\sigma(k+l+1)}}) \cdots (Q_{s_{\sigma(n)}}^{\perp} T_{s_{\sigma(n)}}^{(1)} Q_{s_{\sigma(n)}}),
\end{split}
\end{equation*}
where $\sigma = \sigma_{l, k, \Gamma_0, \Gamma_1, \Gamma_2}$ is as in Definition \ref{PermutationDefinition}. If no such $\sigma$ exists for given parameters, the corresponding summand is taken to be zero.
\end{proposition}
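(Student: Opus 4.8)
The plan is to reduce the statement to the single--generator case and then build $T_{\mathbf{w}}^{(q)}$ up multiplicatively by induction on $n=|\mathbf{w}|$. The building block is the identity
\[
T_s^{(q)} = Q_s T_s^{(1)} Q_s^{\perp} + Q_s^{\perp} T_s^{(1)} Q_s + p_s(q)\, Q_s \qquad (s \in S),
\]
which one verifies directly on the orthonormal basis $(\delta_{\mathbf{v}})_{\mathbf{v}\in W}$ using the action of $T_s^{(q)}$ recorded before the definition of $C^\ast_{r,q}(W)$: on a word $\mathbf{v}$ not starting with $s$ only the first (creation) term survives and yields $\delta_{s\mathbf{v}}$, while on a word $\mathbf{v}=s\mathbf{v}'$ (where $\mathbf{v}'$ does not start with $s$, by reducedness) the second (annihilation) term yields $\delta_{\mathbf{v}'}$ and the third contributes $p_s(q)\delta_{\mathbf{v}}$. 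Since $Q_s=Q_{V(\{s\})}$ is the projection onto words starting with $s$, this matches the $l=1,\,k=0$ summand of Proposition~\ref{GeneratorDecomposition}, while the creation and annihilation terms are the $l=0,\,k=1$ and $l=0,\,k=0$ summands. This settles the base case $n=1$.

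For the inductive step I would write $\mathbf{w}=s_1\mathbf{w}'$ with $\mathbf{w}'=s_2\cdots s_n$ reduced; since $|s_1\mathbf{w}'|>|\mathbf{w}'|$, the multiplication rule~\eqref{eq:MultiplicationRule} gives $T_{\mathbf{w}}^{(q)}=T_{s_1}^{(q)}T_{\mathbf{w}'}^{(q)}$, and iterating, $T_{\mathbf{w}}^{(q)}=T_{s_1}^{(q)}\cdots T_{s_n}^{(q)}$. Substituting the three--term expression for each factor expands $T_{\mathbf{w}}^{(q)}$ into a sum of $3^n$ products of creation, annihilation and diagonal operators, indexed by an assignment of a role to each letter. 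The core claim is that the collected nonzero products equal the sum over the admissible tuples $(l,k,\Gamma_0,\Gamma_1,\Gamma_2)$ of Definition~\ref{PermutationDefinition}: the diagonal letters form the clique $\Gamma_0$ (they must pairwise commute and be shuffleable into one block, producing the factor $\prod_{t\in V\Gamma_0}p_t(q)\,Q_{V(\Gamma_0)}$), the surviving creation letters form the left block of length $k$, the annihilation letters form the right block, and $\Gamma_1,\Gamma_2\in\mathrm{Comm}(\Gamma_0)$ record which generators of $\mathrm{Link}(\Gamma_0)$ are consumed on the creation, respectively annihilation, side. The length conditions (iii)--(vi) on $\sigma$ express exactly that these boundary generators do, respectively do not, reduce the relevant partial words, and the tie--breaking clause (vii) makes $\sigma$ unique so that each admissible summand arises once.

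I would then carry out the induction by analysing how prepending $s_1$ via the three terms of its decomposition transforms a given summand of $T_{\mathbf{w}'}^{(q)}$: the creation term raises $k$ by one and enlarges the left block, the diagonal term adjoins $s_1$ to $\Gamma_0$ (permissible precisely when $s_1\in\mathrm{Link}(\Gamma_0)$ and was not already consumed, as governed by $\Gamma_1$), and the annihilation term cancels $s_1$ against the front, correspondingly shrinking the left block or updating $\Gamma_1$. At every step admissibility and the length bookkeeping are controlled by the right--angled cancellation law recalled in Section~\ref{Section1}: a reduction $s_1\cdots s_m=s_1\cdots\widehat{s_i}\cdots\widehat{s_j}\cdots s_m$ forces $s_i=s_j$ together with commutation of $s_i$ with all intervening letters, which is what permits letters to be shuffled into the three ordered blocks and forbids any cancellation outside them. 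The main obstacle is precisely this combinatorial bookkeeping: checking that the seven conditions defining $\sigma$ are preserved under each of the three operations, that the induced map on index tuples hits each admissible tuple exactly once (relying on the uniqueness clause (vii) to rule out over-- and under--counting), and that a given $3^n$--product vanishes unless its diagonal and annihilation letters can be simultaneously brought to the front past mutually commuting letters. Showing that the clique and link conditions are both necessary and sufficient for non-vanishing is the technical heart of the argument, and is exactly where the right--angledness of $(W,S)$ is used in an essential way.
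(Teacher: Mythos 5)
Your proposed base identity and factorization are correct, but first a point of reference: the paper itself contains \emph{no} proof of this statement -- Proposition \ref{GeneratorDecomposition} is imported verbatim, with citation, from \cite[Proposition 2.6]{CaspersKlisseLarsen21} -- so your attempt can only be measured against that source. Your starting point is sound: the identity $T_s^{(q)} = Q_sT_s^{(1)}Q_s^{\perp} + Q_s^{\perp}T_s^{(1)}Q_s + p_s(q)Q_s$ is exactly Remark \ref{OperatorRemark} combined with $T_s^{(q)} = T_s^{(1)} + p_s(q)Q_s$; the factorization $T_{\mathbf{w}}^{(q)} = T_{s_1}^{(q)}\cdots T_{s_n}^{(q)}$ along a reduced expression follows from \eqref{eq:MultiplicationRule}; and expanding into $3^n$ creation/annihilation/diagonal products and collecting is indeed the strategy of the cited source. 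Your identification of the three single-letter terms with the $(l,k)\in\{(0,1),(1,0),(0,0)\}$ summands is also correct.

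Nevertheless the proposal has a genuine gap, and in one place an actual error. The gap: everything that makes the proposition true lies in the bookkeeping you explicitly defer. Concretely missing are (a) the operator lemmas that make reordering into the canonical creation--diagonal--annihilation form possible: writing $c_s := Q_sT_s^{(1)}Q_s^{\perp}$, $a_s := Q_s^{\perp}T_s^{(1)}Q_s$, one must prove that for $m_{st}=2$ all such operators attached to $s$ and $t$ commute, that for $m_{st}=\infty$ every ``wrong-order'' product vanishes because no element of $W$ starts with both $s$ and $t$ (so $Q_sQ_t=0$), and that diagonal factors merge, $Q_sQ_t = Q_{V(\Gamma_0)}$; and (b) the verification that the surviving reordered products biject with the tuples for which $\sigma$ exists -- in particular, conditions (iii) and (v) of Definition \ref{PermutationDefinition} require the blocks to be spelled by the \emph{chosen} expressions in $\mathcal{I}$, a property destroyed by naively prepending $s_1$, so each inductive step must re-spell the blocks using (a) and track how the uniquely determined $(\Gamma_1,\Gamma_2)$ transform. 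The error: you assert that in the inductive step the annihilation term ``cancels $s_1$ against the front, correspondingly shrinking the left block.'' This can never happen. Since $\mathbf{w}=s_1\mathbf{w}'$ is reduced, $\mathbf{w}'$ does not start with $s_1$; and if $a_{s_1}$ commuted past creation letters $s_{\sigma(1)},\dots,s_{\sigma(j-1)}$ (all commuting with $s_1$) to meet a letter $s_{\sigma(j)}=s_1$, then $s_{\sigma(1)}\cdots s_{\sigma(j)}\leq \mathbf{w}'$ with $s_1$ commuting with all earlier letters would force $s_1\leq\mathbf{w}'$, a contradiction. Hence prepending $a_{s_1}$ either kills the summand (when $s_1$ fails to commute with some creation or diagonal letter) or migrates through to become the leftmost member of the annihilation block, enlarging it; were cancellation possible it would produce factors $a_{s_1}c_{s_1}=Q_{s_1}^{\perp}$, which are not of the canonical form, and the induction as you describe it would not close. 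Until (a), (b) and a corrected case analysis are carried out, what you have is an outline of the argument in \cite{CaspersKlisseLarsen21}, not a proof.
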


\begin{remark} \label{OperatorRemark}
Let $(W, S)$ be a right-angled Coxeter system and $q = (q_s)_{s \in S} \in \mathbb{R}_{>0}^{(W, S)}$. For each $s \in S$, the operators
\[
Q_s T_s^{(1)} Q_s^{\perp} = Q_s T_s^{(q)} Q_s^{\perp}, \quad
Q_s^{\perp} T_s^{(1)} Q_s = Q_s^{\perp} T_s^{(q)} Q_s, \quad
p_s(q) Q_s = Q_s T_s^{(q)} Q_s
\]
are referred to as the \emph{creation}, \emph{annihilation}, and \emph{diagonal} operators associated with $T_s^{(q)}$, respectively, as considered in \cite{CaspersKlisseLarsen21} and \cite{Klisse25}. The terminology is justified by the following identities for $\mathbf{w} \in W$:
\[
Q_s T_s^{(1)} Q_s^{\perp} \delta_{\mathbf{w}} = 
\begin{cases}
\delta_{s\mathbf{w}}, & \text{if } s \nleq \mathbf{w}, \\
0, & \text{if } s \leq \mathbf{w},
\end{cases}
\quad
Q_s^{\perp} T_s^{(1)} Q_s \delta_{\mathbf{w}} =
\begin{cases}
\delta_{s\mathbf{w}}, & \text{if } s \leq \mathbf{w}, \\
0, & \text{if } s \nleq \mathbf{w},
\end{cases}
\]
\[
Q_s \delta_{\mathbf{w}} =
\begin{cases}
\delta_{\mathbf{w}}, & \text{if } s \leq \mathbf{w}, \\
0, & \text{if } s \nleq \mathbf{w}.
\end{cases}
\]
These formulas play an important role in the proof of Theorem \ref{HaagerupTypeCondition}.
\end{remark}

We are now in a position to prove Theorem~\ref{HaagerupTypeCondition}. The proof follows the general approach developed in~\cite[Sections 4 and 5]{OzawaRieffel05}. A similar general approach will be employed in the proof of Lemma \ref{HaagerupTypeCondition2}.

\begin{proof}[Proof of Theorem~\ref{HaagerupTypeCondition}]
\emph{About (1)}: Suppose that the graph $\Gamma$ contains no induced squares, and let $K > 0$ be the constant provided by Lemma~\ref{SetSize}. Fix $q \in \mathbb{R}_{>0}^{(W, S)}$, $i, j, n \in \mathbb{N}$, let $x \in \chi_n(\mathbb{C}_q[W])$, and take $\xi \in P_j \ell^2(W)$. By Proposition~\ref{GeneratorDecomposition} and Remark~\ref{OperatorRemark}, we have
\begin{eqnarray*}
P_{i}x\xi = \sum_{(l,k,\Gamma_{0},\Gamma_{1},\Gamma_{2})}\sum_{\mathbf{w}\in W:|\mathbf{w}|=n}\left(\prod_{t\in V\Gamma_{0}}p_{t}(q)\right) \left(\sum\left\{ \left.x(\mathbf{w})\xi(\mathbf{v})\delta_{\mathbf{w}_{l,\Gamma_{0},\Gamma_{1},\Gamma_{2}}^{(1)}\mathbf{w}_{l,\Gamma_{0},\Gamma_{1},\Gamma_{2}}^{(3)}\mathbf{v}} \,  \right|\right.\right. \qquad \qquad & \\
\mathbf{v}\in W,\left|\mathbf{v}\right|=j,(\mathbf{w}_{l,\Gamma_{0},\Gamma_{1},\Gamma_{2}}^{(3)})^{-1}\leq\mathbf{v},\mathbf{w}_{l,\Gamma_{0},\Gamma_{1},\Gamma_{2}}^{(2)}\leq\mathbf{w}_{l,\Gamma_{0},\Gamma_{1},\Gamma_{2}}^{(3)}\mathbf{v}, \qquad \quad &  \\
\left.\left.\mathbf{w}_{l,\Gamma_{0},\Gamma_{1},\Gamma_{2}}^{(1)}\leq\mathbf{w}_{l,\Gamma_{0},\Gamma_{1},\Gamma_{2}}^{(1)}\mathbf{w}_{l,\Gamma_{0},\Gamma_{1},\Gamma_{2}}^{(3)}\mathbf{v}\right\} \right), &
\end{eqnarray*}
where the outer sum runs over all tuples $(l, \Gamma_0, \Gamma_1, \Gamma_2)$ with $0 \leq l \leq n$, $\Gamma_0 \in \mathrm{Cliq}(\Gamma, l)$, and $(\Gamma_1, \Gamma_2) \in \mathrm{Comm}(\Gamma_0)$, and where for each such tuple and each $\mathbf{w} \in W$ with $|\mathbf{w}| = n$ (for which the associated permutation $\sigma$ from Definition~\ref{PermutationDefinition} exists), we define
\[
\mathbf{w}_{l,\Gamma_{0},\Gamma_{1},\Gamma_{2}}^{(1)}:=s_{\sigma(1)}...s_{\sigma(k_{0})},\; \mathbf{w}_{l,\Gamma_{0},\Gamma_{1},\Gamma_{2}}^{(2)}:=s_{\sigma(k_{0}+1)}...s_{\sigma(k_{0}+l)},\; \mathbf{w}_{l,\Gamma_{0},\Gamma_{1},\Gamma_{2}}^{(3)}:=s_{\sigma(k_{0}+l+1)} \cdots s_{\sigma(n)},
\]
with $k_0 := \frac{1}{2}(i - j + n - l)$. (Note that $k_0$ depends on the tuple $(l, \Gamma_0, \Gamma_1, \Gamma_2)$ and $\mathbf{w}$, though we suppress this in the notation for brevity.)

It follows that
\begin{eqnarray*}
& & (P_{i}x\xi)(\mathbf{v})\\
&=& \sum_{(l,\Gamma_{0},\Gamma_{1},\Gamma_{2})}\left(\prod_{t\in V \Gamma_{0}}p_{t}(q)\right) \\
&\times& \left(\sum\left\{ \left.x(\mathbf{w}_{l,\Gamma_{0},\Gamma_{1},\Gamma_{2}}^{(1)}\mathbf{w}_{l,\Gamma_{0},\Gamma_{1},\Gamma_{2}}^{(2)}\mathbf{w}_{l,\Gamma_{0},\Gamma_{1},\Gamma_{2}}^{(3)})\xi((\mathbf{w}_{l,\Gamma_{0},\Gamma_{1},\Gamma_{2}}^{(1)}\mathbf{w}_{l,\Gamma_{0},\Gamma_{1},\Gamma_{2}}^{(3)})^{-1}\mathbf{v})\right|\right.\right.\\
& & \qquad \qquad \qquad \qquad \mathbf{w}\in W, \left|\mathbf{w}\right|=n,(\mathbf{w}_{l,\Gamma_{0},\Gamma_{1},\Gamma_{2}}^{(3)})^{-1}\leq(\mathbf{w}_{l,\Gamma_{0},\Gamma_{1},\Gamma_{2}}^{(1)}\mathbf{w}_{l,\Gamma_{0},\Gamma_{1},\Gamma_{2}}^{(3)})^{-1}\mathbf{v}\\
& & \qquad \qquad  \qquad \qquad \qquad \qquad \qquad \qquad \left.\left.\mathbf{w}_{l,\Gamma_{0},\Gamma_{1},\Gamma_{2}}^{(2)}\leq(\mathbf{w}_{l,\Gamma_{0},\Gamma_{1},\Gamma_{2}}^{(1)})^{-1}\mathbf{v},\mathbf{w}_{l,\Gamma_{0},\Gamma_{1},\Gamma_{2}}^{(1)}\leq\mathbf{v}\right\} \right),
\end{eqnarray*}
for $\mathbf{v}\in W$ with $|\mathbf{v}|=i$ and hence,
\begin{equation} \label{eq:TupleDefinition}
\begin{split}
& \left|(P_{i}x\xi)(\mathbf{v})\right| \\
&\leq (\#\text{Cliq}(\Gamma))^{2}C_{q}\sum_{\Gamma_{0}\in\text{Cliq}(\Gamma)}\left(\sum\left\{ \left.|x(\mathbf{u}V(\Gamma_{0})\mathbf{u}^{\prime})||\xi((\mathbf{u}\mathbf{u}^{\prime})^{-1}\mathbf{v})| \: \right|\right. \, \right. \mathbf{u},\mathbf{u}^{\prime}\in W \text{ with }\\
&  \qquad  \qquad \qquad \qquad  \qquad \left|\mathbf{u}\right|=2^{-1}(i-j+n-\#V\Gamma_{0}),\left|\mathbf{u}^{\prime}\right|=n-2^{-1}(i-j+n+\#V\Gamma_{0}), \\
&  \qquad \qquad \qquad \qquad \qquad \qquad  \qquad \qquad \qquad  \qquad \qquad \left.\left.(\mathbf{u}^{\prime})^{-1}\leq(\mathbf{u}\mathbf{u}^{\prime})^{-1}\mathbf{v},V(\Gamma_{0})\leq\mathbf{u}^{-1}\mathbf{v},\mathbf{u}\leq\mathbf{v}\right\} \right), 
\end{split}
\end{equation}
where $C_q := \max_{\Gamma_0 \in \mathrm{Cliq}(\Gamma)} \prod_{t \in V \Gamma_0} |p_t(q)|$. Applying the Cauchy–Schwarz inequality gives
\[
\| P_i x \xi \|_2^2 \leq (\# \mathrm{Cliq}(\Gamma))^4 C_q^2 \sum_{\mathbf{v}\in W : |\mathbf{v}| = i}
\left( \sum_{(\Gamma_0, \mathbf{u}, \mathbf{u}')} |x(\mathbf{u} V(\Gamma_0) \mathbf{u}')|^2 \right)
\left( \sum_{(\Gamma_0, \mathbf{u}, \mathbf{u}')} |\xi((\mathbf{u} \mathbf{u}')^{-1} \mathbf{v})|^2 \right),
\]
where the sums run over tuples as in \eqref{eq:TupleDefinition}. We may reindex these sums to estimate
\[
\| P_i x \xi \|_2^2 \leq (\# \mathrm{Cliq}(\Gamma))^4 C_q^2 \sum_{\mathbf{x} \in W : |\mathbf{x}| = n} \sum_{\mathbf{y} \in W : |\mathbf{y}| = j} R_{\mathbf{x}, \mathbf{y}}(i) |x(\mathbf{x})|^2\, |\xi(\mathbf{y})|^2,
\]
where $R_{\mathbf{x}, \mathbf{y}}(i)$ is as in Lemma~\ref{SetSize}. Thus, applying Lemma~\ref{SetSize}, we obtain
\[
\| P_i x \xi \|_2 \leq \sqrt{K} (\# \mathrm{Cliq}(\Gamma))^2 C_q \|x \delta_e\|_2 \|\xi\|_2,
\]
as required.\\

\emph{About (2)}: Now assume that $\Gamma$ contains an induced square. Then there exist $u, v, s, t \in S$ such that
\[
m_{u,s} = m_{u,t} = m_{v,s} = m_{v,t} = 2 \quad \text{and} \quad m_{u,v} = m_{s,t} = \infty.
\]
For each $n \in \mathbb{N}_{\geq 1}$, define
\[
x_n := \frac{1}{n} \sum_{i=1}^n T_{(uv)^i (st)^{n - i}}^{(q)} \in \chi_{2n}(\mathbb{C}_q[W]), \quad
\xi_n := \frac{1}{\sqrt{2n}} \sum_{j=1}^{2n} \delta_{(uv)^j (st)^{2n - j}} \in P_{4n} \ell^2(W).
\]
Then clearly $\| x_n \delta_e \|_2 = n^{-1/2}$ and $\| \xi_n \|_2 = 1$. Observe that
\begin{eqnarray*}
\| P_{6n} x_n P_{4n} \| &\geq& \| P_{6n} x_n \xi_n \|_2 = \frac{1}{\sqrt{2n^3}} \left\| \sum_{i=1}^n \sum_{j=1}^{2n} \delta_{(uv)^{i + j} (st)^{3n - (i + j)}} \right\|_2 \\
\nonumber 
&\geq& \frac{1}{\sqrt{2n^3}} \left\| \sum_{i=n+1}^{2n} n \delta_{(uv)^i (st)^{3n - i}} \right\|_2 = \frac{1}{\sqrt{2}},
\end{eqnarray*}
and hence
\[
\sup_{i,j \in \mathbb{N}} \frac{\| P_i x_n P_j \|}{\| x_n \delta_e \|_2} \geq \sqrt{\frac{n}{2}} \to \infty .
\]
It follows that there is no constant $C > 0$ such that $\| P_i x P_j \| \leq C \| x \delta_e \|_2$ for all $i, j, n \in \mathbb{N}$ and all $x \in \chi_n(\mathbb{C}_q[W])$, as claimed.
\end{proof}

\vspace{3mm}


\subsection{Quantum Metric Structures on Right-Angled Hecke C$^\ast$-Algebras}

Let $(W,S)$ be a finite rank, right-angled Coxeter system, and assume that the graph $\Gamma$ defined in Theorem~\ref{HaagerupTypeCondition} contains no induced square. The results of \cite[Sections 2 and 3]{OzawaRieffel05} imply that, for every $q \in \mathbb{R}_{>0}^{(W,S)}$, the set $\mathcal{L}_1(C_{r,q}^{*}(W), L_S^{(q)}, \tau_q)$ is totally bounded with respect to the norm topology. Consequently, by Theorem~\ref{CQMSCharacterization}, the pair $(C_{r,q}^{*}(W),L_S^{(q)})$ forms a compact quantum metric space,  partially answering Question \ref{MainQuestion}.

\begin{theorem} \label{CQMSStatement}
Let $(W,S)$ be a finite rank, right-angled Coxeter system, and suppose that the graph $\Gamma$ defined in Theorem~\ref{HaagerupTypeCondition} contains no induced square. Then $(C_{r,q}^{*}(W), L_S^{(q)})$ is a compact quantum metric space.
\end{theorem}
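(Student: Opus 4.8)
The plan is to verify the three conditions of Definition~\ref{CQMSDefinition}, of which the first two have already been established: equation~\eqref{eq:2-Norm} shows that $\ker(L_S^{(q)}) = \mathbb{R}1$, and $\operatorname{dom}(L_S^{(q)}) = \mathbb{C}_q[W] \cap \mathfrak{sa}(C_{r,q}^*(W))$ is dense. Everything therefore reduces to condition (3), and by the total-boundedness criterion of Definition~\ref{CQMSCharacterization} it suffices to exhibit a single state $\varphi$ for which $\mathcal{L}_1(C_{r,q}^*(W), L_S^{(q)}, \varphi)$ is norm-totally bounded. The natural choice is $\varphi = \tau_q$, the faithful tracial state of Lemma~\ref{BasicLemma}; with it, $\tau_q(x) = 0$ forces $\chi_0(x) = 0$, so the candidate elements carry no constant term. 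I would then invoke \cite[Sections~2 and~3]{OzawaRieffel05}: once a $\ast$-filtration by finite-dimensional subspaces (Lemma~\ref{FiltrationLemma}), a faithful trace (Lemma~\ref{BasicLemma}), and the Haagerup-type estimate of Theorem~\ref{HaagerupTypeCondition}\eqref{FirstStatement} are in place, their results yield precisely this total boundedness for each fixed $q$.

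To make the mechanism transparent I would record the two estimates that drive it. First, for $x$ in the Lip-ball one has $L_S^{(q)}(x) = \|[D_S,x]\| \le 1$, so $\|[D_S,x]\delta_e\|_2 \le 1$; combined with \eqref{eq:2-Norm} this gives the $L^2$-decay $\|\chi_n(x)\delta_e\|_2 \le 1/n$ for every $n \ge 1$. Second, the identity $P_i[D_S,x]P_j = (i-j)P_i x P_j$ produces the off-diagonal decay $\|P_i x P_j\| \le 1/|i-j|$ for $i \ne j$, while Theorem~\ref{HaagerupTypeCondition}\eqref{FirstStatement} converts the $L^2$-information into block-wise operator-norm control, $\|P_i \chi_n(x) P_j\| \le K C_q\,\|\chi_n(x)\delta_e\|_2 \le K C_q/n$. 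Grouping the blocks $P_i x P_j$ along fixed diagonals $i - j = d$ (whose ranges and sources are mutually orthogonal, so that each diagonal has operator norm equal to the supremum of its block norms) and combining the two bounds lets one control the high-degree tail $x - \chi_{\le N}(x)$ uniformly in norm over the Lip-ball -- this is the delicate estimate, elaborated below -- whereas the low-degree part $\chi_{\le N}(x)$ lies in the finite-dimensional space $\mathbb{C}_q^{(N)}[W]$ and is uniformly norm-bounded there (indeed $\|\chi_n(x)\| \le 2 K C_q$ for each $n \ge 1$). Since a bounded subset of a finite-dimensional space is totally bounded, total boundedness of the Lip-ball follows.

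The main obstacle is exactly this combination step. The two gradings at play are genuinely different: $\chi_n$ records the word length of $\mathbf{w}$ in the generators $T^{(q)}_{\mathbf{w}}$, whereas $D_S$ and the $P_i$ grade the Hilbert space $\ell^2(W)$ by the length of $\delta_{\mathbf{v}}$. Consequently neither the off-diagonal decay $\|P_i x P_j\| \le 1/|i-j|$ nor the Haagerup bound is summable on its own, and a naive triangle inequality over diagonals -- which at best yields $\min(1/|d|,\,C/\sqrt{|d|})$ per diagonal -- diverges. Reconciling the two into a single \emph{summable} tail estimate is precisely the content of \cite[Sections~2 and~3]{OzawaRieffel05}, where it is achieved by an almost-orthogonality argument that balances the two decay rates rather than summing block norms term by term. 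Since Theorem~\ref{HaagerupTypeCondition}\eqref{FirstStatement} supplies their hypothesis with a constant $K C_q$ that is uniform in $i, j, n$, their conclusion applies for each fixed $q$, and Definition~\ref{CQMSCharacterization} then upgrades total boundedness to the compact quantum metric space property, completing the proof.
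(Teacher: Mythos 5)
Your proposal is correct and follows essentially the same route as the paper: the paper's proof likewise notes that the first two conditions are already established, then cites \cite[Sections 2 and 3]{OzawaRieffel05} together with Theorem~\ref{HaagerupTypeCondition}\eqref{FirstStatement} to conclude that $\mathcal{L}_1(C_{r,q}^{*}(W), L_S^{(q)}, \tau_q)$ is totally bounded in norm, and finishes by invoking the characterization in Definition~\ref{CQMSCharacterization}. One caveat on your heuristic elaboration only: the Ozawa--Rieffel mechanism (as carried out in the paper's Proposition~\ref{UnityBoundedness}) approximates $x$ by the \emph{double} truncation $\sum_{|i-j|\leq N} P_i \chi_{\leq M}(x) P_j$, not by $\chi_{\leq M}(x)$ alone -- uniform norm-smallness of the bare tail $x - \chi_{\leq M}(x)$ over the Lip-ball is not what that argument establishes -- but since you explicitly defer this delicate step to the citation, exactly as the paper does, your proof stands.
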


In this subsection, we prove a slightly stronger result that will be instrumental in Section~\ref{qGHContinuity}.

For notational convenience, we write $\mathcal{L}_1^{(q)} := \mathcal{L}_1(C_{r,q}^{*}(W), L_S^{(q)})$ and 
$\mathcal{B}_1^{(q)} := \mathcal{L}_1(C_{r,q}^{*}(W), L_S^{(q)}, \tau_q)$ for $\quad q \in \mathbb{R}_{>0}^{(W,S)}$. Explicitly,
\[
\mathcal{L}_1^{(q)} = \{ x \in \mathbb{C}_q[W] \mid L_S^{(q)}(x) \leq 1 \}, \quad
\mathcal{B}_1^{(q)} = \{ x \in \mathbb{C}_q[W] \mid \tau_q(x) = 0,\ L_S^{(q)}(x) \leq 1 \}.
\]

\begin{proposition} \label{UnityBoundedness}
Let $(W,S)$ be a finite rank, right-angled Coxeter system, and suppose that the graph $\Gamma$ defined in Theorem~\ref{HaagerupTypeCondition} contains no induced square. Furthermore, let $\mathcal{K} \subseteq \mathbb{R}_{>0}^{(W,S)}$ be a compact subset. Then the set
\[
\mathbf{B} := \bigcup_{q \in \mathcal{K}} \mathcal{B}_1^{(q)} \subseteq \mathcal{B}(\ell^2(W))
\]
is totally bounded in the operator norm.
\end{proposition}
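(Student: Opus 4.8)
The plan is to combine the uniform Haagerup-type estimate of Theorem~\ref{HaagerupTypeCondition}(\ref{FirstStatement}) with a Schur-multiplier (circle-averaging) argument for the commutator, arranging all constants to be independent of $q \in \mathcal{K}$. First set $C_{\mathcal{K}} := \sup_{q \in \mathcal{K}} C_q$; since $q \mapsto p_t(q) = (q_t-1)/\sqrt{q_t}$ is continuous and $\mathcal{K}$ is compact, $C_{\mathcal{K}} < \infty$, so Theorem~\ref{HaagerupTypeCondition}(\ref{FirstStatement}) yields the uniform bound $\|P_i x P_j\| \le K C_{\mathcal{K}} \|x\delta_e\|_2$ for all $q \in \mathcal{K}$, all $x \in \chi_n(\mathbb{C}_q[W])$, and all $i,j,n$. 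Next I record the $\ell^2$-data of $x \in \mathcal{B}_1^{(q)}$: from $\tau_q(x)=0$ one gets $x(e)=0$, and from $L_S^{(q)}(x) = \|[D_S,x]\| \le 1$ together with \eqref{eq:2-Norm} one gets $\sum_n n^2 a_n^2 \le 1$, where $a_n := \|\chi_n(x)\delta_e\|_2$; hence $\sum_{n>N} a_n \le N^{-1/2}$ by Cauchy--Schwarz, uniformly in $q$ and $x$.

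The heart of the argument is the far-off-diagonal estimate. For $d \in \mathbb{Z}$ write $x^{(d)} := \sum_j P_{j+d} x P_j$ for the $d$-th diagonal, and note the elementary facts $\|x^{(d)}\| \le \|x\|$ and $[D_S,x]^{(d)} = d\,x^{(d)}$. Setting $\Delta_M(x) := \sum_{|d| \le M} x^{(d)}$ and inverting the commutator on off-diagonals, $x^{(d)} = d^{-1} [D_S,x]^{(d)}$ for $|d|>M$, one obtains $x - \Delta_M(x) = \Psi_M([D_S,x])$, where $\Psi_M$ is the Toeplitz--Schur multiplier with symbol $m_M(d) = d^{-1}\mathbf{1}_{|d|>M}$. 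With the unitaries $U_\theta := \sum_n e^{in\theta} P_n$ this multiplier has the integral representation $\Psi_M(T) = \int_{\mathbb{T}} U_\theta^* T U_\theta\, f_M(\theta)\, \frac{d\theta}{2\pi}$, where $f_M$ is the Fourier tail beyond $\pm M$ of the sawtooth $i(\pi-\theta) = \sum_{d \ne 0} d^{-1} e^{id\theta}$, whence $\|\Psi_M\| \le \|f_M\|_{L^1(\mathbb{T})}$. The crucial input is $\|f_M\|_{L^1} \to 0$ as $M \to \infty$: the partial sums $2\sum_{d=1}^M d^{-1}\sin d\theta$ are uniformly bounded and converge pointwise almost everywhere, so dominated convergence applies. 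Since $\Psi_M$ is $q$-independent and $\|[D_S,x]\| \le 1$, this gives $\|x - \Delta_M(x)\| \le \|f_M\|_{L^1} \to 0$, uniformly over all $q \in \mathcal{K}$ and $x \in \mathbf{B}$.

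It then remains to truncate in word length and conclude. Put $\tilde{x}_{M,N} := \Delta_M(\chi_{\le N}^{(q)}(x))$ and split $x - \tilde{x}_{M,N} = (x - \Delta_M(x)) + \Delta_M\big(\sum_{n>N} \chi_n(x)\big)$. The first summand is controlled above; for the second, applying the uniform Haagerup bound diagonal by diagonal gives $\big\|\Delta_M(\sum_{n>N}\chi_n(x))\big\| \le (2M+1) K C_{\mathcal{K}} \sum_{n>N} a_n \le (2M+1) K C_{\mathcal{K}} N^{-1/2}$, which for fixed $M$ tends to $0$ as $N \to \infty$. Finally the approximants form a totally bounded family: the coefficient vectors $(x(\mathbf{w}))_{1 \le |\mathbf{w}| \le N}$ range over the fixed compact ellipsoid $\{c : \sum_{|\mathbf{w}| \le N} |\mathbf{w}|^2 |c_{\mathbf{w}}|^2 \le 1\}$; the evaluation map $(q,c) \mapsto \sum_{\mathbf{w}} c_{\mathbf{w}} T_{\mathbf{w}}^{(q)}$ is norm-continuous on the compact set $\mathcal{K} \times (\text{ellipsoid})$, since $q \mapsto T_s^{(q)}$ is norm-Lipschitz via $T_s^{(q)} - T_s^{(q')} = (p_s(q)-p_s(q')) Q_s$ and only finitely many $\mathbf{w}$ occur; and $\Delta_M$ is bounded with $\|\Delta_M\| \le 2M+1$. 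Thus $\{\tilde{x}_{M,N}\}$ is a continuous image of a compact set, hence totally bounded. Given $\varepsilon > 0$, choosing first $M$ with $\|f_M\|_{L^1} < \varepsilon/2$ and then $N$ with $(2M+1) K C_{\mathcal{K}} N^{-1/2} < \varepsilon/2$ places $\mathbf{B}$ within $\varepsilon$ of this totally bounded set, so $\mathbf{B}$ is totally bounded.

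The main obstacle is exactly the far-off-diagonal tail treated in the second paragraph: the naive termwise bound $\sum_{|d|>M} \|x^{(d)}\| \le \sum_{|d|>M} |d|^{-1}$ diverges, so this contribution cannot be estimated by the triangle inequality over diagonals and instead must genuinely exploit the full operator norm $\|[D_S,x]\|$ -- rather than merely its value on $\delta_e$ -- through the Schur-multiplier estimate $\|\Psi_M\| \le \|f_M\|_{L^1} \to 0$. Once this is established, the uniformity in $q$ is automatic, since $\Psi_M$ does not depend on $q$ and the Haagerup constant is uniformly $K C_{\mathcal{K}}$ on the compact parameter set $\mathcal{K}$.
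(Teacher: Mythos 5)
Your proof is correct and follows essentially the same route as the paper's: the same three-part decomposition into a far-off-diagonal part controlled through $\|[D_S,x]\|$, a word-length tail controlled by the uniform Haagerup-type bound of Theorem~\ref{HaagerupTypeCondition} together with Cauchy--Schwarz and \eqref{eq:2-Norm}, and a remaining piece made totally bounded by compactness in the parameter $q$ and in the (finitely many) coefficients. The only differences are presentational: you re-derive the off-diagonal estimate, which the paper simply quotes from \cite[Section 2]{OzawaRieffel05}, via the sawtooth Schur-multiplier argument, and you replace the paper's explicit $\delta$-net over $\mathcal{K}$ by joint norm-continuity of $(q,c)\mapsto \sum_{\mathbf{w}} c_{\mathbf{w}} T_{\mathbf{w}}^{(q)}$ on the compact set $\mathcal{K}\times\{c \mid \sum_{\mathbf{w}} |\mathbf{w}|^2|c_{\mathbf{w}}|^2\le 1\}$.
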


The proof of Proposition~\ref{UnityBoundedness} adapts the techniques of \cite[Sections 2 and 3]{OzawaRieffel05}, and in particular uses the following estimate.

\begin{lemma}[{\cite[Section 2]{OzawaRieffel05}}]
Let $(W,S)$ be a finite rank, right-angled Coxeter system and let $q \in \mathbb{R}_{>0}^{(W,S)}$. Then for any $x \in \mathbb{C}_q[W]$ and $N \in \mathbb{N}$,
\[
\left\| \sum_{i,j \in \mathbb{N} : |i-j| > N} P_i x P_j \right\|
\leq 2\pi L_S^{(q)}(x) \sqrt{ \sum_{k \in \mathbb{Z} : |k| > N} \frac{1}{k^2} },
\]
where the sum on the left converges in the strong operator topology.
\end{lemma}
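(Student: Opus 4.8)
The plan is to run the Fourier-analytic argument of Ozawa--Rieffel adapted to the flow generated by \(D_S\). Since \(D_S=\sum_n nP_n\) has spectrum contained in \(\mathbb{N}\subseteq\mathbb{Z}\), the unitaries \(u_t:=e^{itD_S}=\sum_n e^{int}P_n\) are \(2\pi\)-periodic with \(u_{2\pi}=1\), and they implement a periodic one-parameter group of automorphisms \(\alpha_t:=\mathrm{Ad}(u_t)\) of \(\mathcal{B}(\ell^2(W))\) which commutes with \(D_S\). For \(x\in\mathbb{C}_q[W]\), setting \(x_k:=\sum_{i-j=k}P_ixP_j\) (a sum of operators with pairwise orthogonal ranges and pairwise orthogonal domains, hence strongly convergent with \(\|x_k\|\le\|x\|\)), one computes \(\alpha_t(x)=\sum_k e^{ikt}x_k\). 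First I would record that if \(x\in\mathbb{C}_q^{(n)}[W]\), then \(P_ixP_j=0\) whenever \(|i-j|>n\), so \(x_k=0\) for \(|k|>n\); thus \(t\mapsto\alpha_t(x)\) is a norm-continuous operator-valued trigonometric polynomial with finitely many modes \(x_k\), and \(\sum_{|i-j|>N}P_ixP_j=\sum_{N<|k|\le n}x_k\) converges in the strong operator topology, each \(x_k\) being itself a strongly convergent orthogonal sum as noted.

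Next I would exploit the grading to identify the Fourier modes of the commutator. Since \([D_S,x]=\sum_{i,j}(i-j)P_ixP_j=\sum_k k\,x_k\) and \(\alpha_t\) commutes with \([D_S,\,\cdot\,]\), the function \(t\mapsto\alpha_t([D_S,x])\) has Fourier coefficients \(k\,x_k\). Fixing \(\xi\in\ell^2(W)\) and applying Parseval's identity to the \(\ell^2(W)\)-valued trigonometric polynomial \(t\mapsto\alpha_t([D_S,x])\xi\) gives
\[
\sum_{k}\|k\,x_k\xi\|_2^2=\frac{1}{2\pi}\int_0^{2\pi}\|\alpha_t([D_S,x])\xi\|_2^2\,dt\le\|[D_S,x]\|^2\,\|\xi\|_2^2,
\]
where the inequality uses that \(u_t\) is unitary, so \(\|\alpha_t([D_S,x])\|=\|[D_S,x]\|=L_S^{(q)}(x)\) for all \(t\) (interpreting \(L_S^{(q)}(x)\) as the commutator norm, which agrees with the definition on self-adjoint elements).

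Finally I would split off the factor \(1/|k|\) and apply the Cauchy--Schwarz inequality over \(\{|k|>N\}\): for each unit vector \(\xi\),
\[
\Big\|\sum_{|k|>N}x_k\xi\Big\|_2\le\sum_{|k|>N}\frac{1}{|k|}\,\|k\,x_k\xi\|_2\le\Big(\sum_{|k|>N}\frac{1}{k^2}\Big)^{1/2}\Big(\sum_{|k|>N}\|k\,x_k\xi\|_2^2\Big)^{1/2}\le L_S^{(q)}(x)\Big(\sum_{|k|>N}\frac{1}{k^2}\Big)^{1/2},
\]
and taking the supremum over \(\|\xi\|_2\le1\) yields the claimed bound -- in fact with constant \(1\) in place of \(2\pi\), so the stated inequality follows a fortiori. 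The only real care required is bookkeeping: justifying that \(t\mapsto\alpha_t(x)\) is a genuine finite Fourier series, that each \(x_k\) and the full sum converge strongly, and that Parseval applies to the Hilbert-space-valued function. None of these presents a serious obstacle; the crux is the correct setup of the periodic flow and the identification \([D_S,x]=\sum_k k\,x_k\).
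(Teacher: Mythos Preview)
Your proof is correct and follows precisely the Fourier-analytic strategy of Ozawa--Rieffel that the paper invokes by citation (the paper gives no proof of its own here). Your vectorwise application of Parseval to $t\mapsto\alpha_t([D_S,x])\xi$ even sharpens the constant from $2\pi$ to $1$, as you note; the coarser constant in the stated lemma arises from the operator-level version of the same argument, where one represents $\sum_{|k|>N}x_k$ as $\frac{1}{2\pi}\int_0^{2\pi}h_N(t)\,\alpha_t([D_S,x])\,dt$ with $h_N(t)=\sum_{|k|>N}k^{-1}e^{-ikt}$ and then estimates $\|h_N\|_{L^1}$ via $\|h_N\|_{L^2}$.
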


\begin{proof}[Proof of Proposition~\ref{UnityBoundedness}]
Let $\varepsilon > 0$. We aim to cover $\mathbf{B} = \bigcup_{q \in \mathcal{K}} \mathcal{B}_1^{(q)}$ with finitely many $\varepsilon$-balls.

First, choose $N \in \mathbb{N}$ sufficiently large such that $\| \sum_{|i-j| > N} P_i x P_j \| < \frac{\varepsilon}{3}$ for all $x \in \mathbf{B}$.

Now let $M \in \mathbb{N}$. By Theorem~\ref{HaagerupTypeCondition}, the Cauchy--Schwarz inequality, and the identity in \eqref{eq:2-Norm}, we have
\begin{align*}
\left\| \sum_{|i-j| \leq N} P_i \chi_{>M}(x) P_j \right\| 
&= \left\| \sum_{m > M} \sum_{|i| \leq N} \sum_{j \geq \max\{0,i\}} P_j \chi_m(x) P_{j-i} \right\| \\
&\leq K(2N+1) C_q \sum_{m > M} \| \chi_m(x) \delta_e \|_2 \\
&\leq K(2N+1) C_q \left( \sum_{m > M} \frac{1}{m^2} \right)^{1/2}
       \left( \sum_{m > M} m^2 \| \chi_m(x) \delta_e \|_2^2 \right)^{1/2} \\
&\leq K (2N+1) C_q \left( \sum_{m > M} \frac{1}{m^2} \right)^{1/2},
\end{align*}
for all $q \in \mathcal{K}$ and $x \in \mathcal{B}_1^{(q)}$, where $K > 0$ is a constant and $C_q = \max_{\Gamma_0 \in \mathrm{Cliq}(\Gamma)} \left( \prod_{t \in \Gamma_0} |p_t(q)| \right)$. Since $\mathcal{K}$ is compact, we may choose $M$ large enough such that
\[
\left\| \sum_{|i-j| \leq N} P_i \chi_{>M}(x) P_j \right\| < \frac{\varepsilon}{3}
\quad \text{for all } x \in \mathbf{B}.
\]

From Proposition~\ref{GeneratorDecomposition}, there exists $\delta > 0$ such that
\[
\left\| T_{\mathbf{w}}^{(q)} - T_{\mathbf{w}}^{(q')} \right\| 
< \frac{\varepsilon}{6N+3} \left( \sum_{\mathbf{w} \in W \setminus \{e\} : |\mathbf{w}| \leq M} \frac{1}{|\mathbf{w}|^2} \right)^{-1/2}
\]
for all $q, q' \in \mathcal{K}$ with $\|q - q'\|_1 < \delta$ and all $\mathbf{w} \in W$ with $|\mathbf{w}| \leq M$, where $\|\cdot\|_1$ denotes the $\ell^1$-norm on $\mathbb{R}^{S}$. Furthermore, by the compactness of $\mathcal{K}$, we can find finitely many $q_1, \dots, q_n \in \mathcal{K}$ such that $\mathcal{K} \subseteq \bigcup_{i=1}^{n} B_{\delta}(q_i)$, where $B_{\delta}(q_i) := \{ q \in \mathcal{K} \mid \|q - q_i\|_1 < \delta \}$.

We now prove the following claim.\\

\emph{Claim.} For every $q \in \mathcal{K}$, there exists $1 \leq l \leq n$ such that
\[
\left\| \sum_{|i-j| \leq N} P_i \chi_{\leq M}(x - x^{(q_l)}) P_j \right\| < \frac{\varepsilon}{3},
\]
where $x^{(q_l)} := \sum_{\mathbf{w} \in W} x(\mathbf{w}) T_{\mathbf{w}}^{(q_l)} \in \mathbb{C}_{q_l}[W]$.

\emph{Proof of the Claim:}
Let $q \in \mathcal{K}$. Then there exists $1 \leq l \leq n$ such that $\|q - q_l\|_1 < \delta$. By the Cauchy--Schwarz inequality,
\begin{align*}
\left\| \sum_{|i-j| \leq N} P_i \chi_{\leq M}(x - x^{(q_l)}) P_j \right\|
&= \left\| \sum_{|i| \leq N} \sum_{j \geq  \max\{0,i\}} P_j \chi_{\leq M}(x - x^{(q_l)}) P_{j-i} \right\| \\
&\leq \frac{\varepsilon}{6N+3} \left( \sum_{\mathbf{w} \in W \setminus \{e\} : |\mathbf{w}| \leq M} \frac{1}{|\mathbf{w}|^2} \right)^{-1/2}
      \sum_{i \leq N} \sum_{\mathbf{w} \in W \setminus \{e\} : |\mathbf{w}| \leq M} |x(\mathbf{w})| \\
&< \frac{\varepsilon}{3},
\end{align*}
as claimed.\\

Combining the claim with the earlier estimates, we find that for any $x \in \mathbf{B}$, there exists $1 \leq l \leq n$ such that $\left\| x - \sum_{|i-j| \leq N} P_i \chi_{\leq M}(x^{(q_l)}) P_j \right\| < \varepsilon$. Moreover, using again the Cauchy--Schwarz inequality and \eqref{eq:2-Norm}, we estimate
\begin{align*}
\| \chi_{\leq M}(x^{(q_l)}) \delta_e \|_2
&\leq \left( \sum_{m=1}^{M} \frac{1}{m^2} \right)^{1/2} 
      \left( \sum_{m=1}^{M} m^2 \| \chi_m(x^{(q_l)}) \delta_e \|_2^2 \right)^{1/2} \\
&\leq \left( \sum_{m=1}^{M} \frac{1}{m^2} \right)^{1/2} \| [D_S, x] \delta_e \|_2 \\
&\leq \left( \sum_{m=1}^{M} \frac{1}{m^2} \right)^{1/2}.
\end{align*}
Thus, it suffices to show that the image under the map $\mathcal{B}(\ell^2(W)) \ni T \mapsto \sum_{|i-j| \leq N} P_i T P_j$ of the union of the sets
\[
\mathcal{Q}_l := \left\{ y \in \chi_{\leq N}(\mathbb{C}_{q_l}[W]) \,\middle|\, \tau_{q_l}(y) = 0,\ 
\| \chi_{\leq M}(y) \delta_e \|_2^2 \leq \sum_{m=1}^{M} \frac{1}{m^2} \right\}
\]
for $1 \leq l \leq n$ is totally bounded in the operator norm. This follows since the map is linear and each $\mathcal{Q}_l$ is a bounded subset of a finite-dimensional normed space. This completes the proof.
\end{proof}

\vspace{3mm}


\section{Continuity in the Quantum Gromov-Hausdorff propinquity\label{qGHContinuity}} \label{Section3}

\vspace{3mm}

Motivated by developments in high-energy physics, Rieffel introduced in \cite{Rieffel04} the \emph{quantum Gromov--Hausdorff distance} on the class of compact quantum metric spaces. This construction serves as a non-commutative analogue of the classical Gromov--Hausdorff distance, enabling the study of convergence phenomena within an operator-algebraic framework. The quantum Gromov--Hausdorff distance is non-negative, symmetric, and satisfies the triangle inequality. However, a vanishing distance does not necessarily imply the existence of a Lip-norm preserving $\ast$-isomorphism between the associated C$^{\ast}$-algebras. Instead, as established in \cite[Theorem 7.7]{Rieffel04}, two compact quantum metric spaces have zero distance if and only if they are isometric in the sense of \cite[Definition 6.3]{Rieffel04}.

Since Rieffel’s seminal work, several alternative constructions of a quantum analog of the Gromov--Hausdorff distance have emerged (see \cite{Kerr03, Li03, Rieffel04, Li06, KerrHanfeng09, Latremoliere15, Latremoliere16, Latremoliere19, Latremoliere22}), each offering distinct advantages and limitations. In this article, we consider Latrémolière’s \emph{quantum Gromov--Hausdorff propinquity}, introduced in \cite{Latremoliere16}, which refines Rieffel’s notion by enforcing compatibility with algebraic structure through the use of Leibniz seminorms. We briefly review the relevant definitions below.

\begin{definition}[{\cite[Definition 3.6]{Latremoliere16}}]
For unital C$^{\ast}$-algebras $A$ and $B$, a \emph{bridge} $\gamma = (Z, \omega, \pi_A, \pi_B)$ from $A$ to $B$ consists of a unital C$^{\ast}$-algebra $Z$, a self-adjoint element $\omega \in Z$ such that the \emph{1-level set}
  \[
  \mathcal{S}_{1}(\omega) := \left\{ \varphi \in \mathcal{S}(Z) \,\middle|\, \varphi(\omega z) = \varphi(z \omega) = \varphi(z) \ \text{for all } z \in Z \right\}
  \]
  is non-empty, and injective unital $\ast$-homomorphisms $\pi_A: A \hookrightarrow Z$ and $\pi_B: B \hookrightarrow Z$.

The element $\omega$ is called the \emph{pivot} of the bridge. The collection of all bridges from $A$ to $B$ is denoted by $\text{\calligra Bridges}(A \rightarrow B)$.
\end{definition}

Let $(A, L_A)$ and $(B, L_B)$ be Leibniz quantum compact metric spaces, and let $\gamma = (Z, \omega, \pi_A, \pi_B)$ be a bridge from $A$ to $B$. Following \cite[Definitions 3.14--3.17]{Latremoliere16}, we define the associated notions of \emph{reach}, \emph{height}, and \emph{length} as follows:

\begin{enumerate}[label=(\roman*)]
  \item The \emph{reach} of $\gamma$ with respect to $L_A$ and $L_B$ is given by
  \[
  \mathfrak{reach}(\gamma \mid L_A, L_B) := \mathrm{Haus} \left( \pi_A\left(\mathcal{L}_1(A, L_A)\right)\omega, \, \omega \pi_B\left(\mathcal{L}_1(B, L_B)\right) \right),
  \]
  where the Hausdorff distance is taken with respect to the operator norm of $Z$.
  
  \item The \emph{height} of $\gamma$ with respect to $L_A$ and $L_B$ is defined as
\begin{eqnarray}
\nonumber
  \mathfrak{height}(\gamma \mid L_A, L_B) := \max \{ & \mathrm{Haus}\left( \mathcal{S}(A), \mathcal{S}_1(\omega) \circ \pi_A \right),\\
\nonumber
& \mathrm{Haus}\left( \mathcal{S}(B), \mathcal{S}_1(\omega) \circ \pi_B \right) \},
\end{eqnarray}
  where the Hausdorff distances are taken with respect to the Monge--Kantorovich metrics $d_{L_A}$ and $d_{L_B}$.

  \item The \emph{length} of the bridge is then
  \[
  \lambda(\gamma \mid L_A, L_B) := \max\left\{ \mathfrak{reach}(\gamma \mid L_A, L_B), \, \mathfrak{height}(\gamma \mid L_A, L_B) \right\}.
  \]
\end{enumerate}

\begin{definition}[{\cite[Definitions 3.20 and 3.22]{Latremoliere16}}]
Let $(A, L_A)$ and $(B, L_B)$ be Leibniz quantum compact metric spaces. A \emph{trek} $\Gamma$ from $(A, L_A)$ to $(B, L_B)$ is a finite sequence of bridges $\Gamma := (\gamma_i)_{1\leq i \leq n}$, where each $\gamma_i$ is a bridge from a Leibniz quantum compact metric space $(A_i, L_i)$ to a Leibniz quantum compact metric space $(A_{i+1}, L_{i+1})$, with $(A_1, L_1) = (A, L_A)$ and $(A_{n+1}, L_{n+1}) = (B, L_B)$. The set of all such treks is denoted by $\text{\calligra Treks}((A, L_A) \rightarrow (B, L_B))$.

The \emph{length} of the trek $\Gamma$ is defined as
\[
\lambda(\Gamma) := \sum_{i=1}^{n} \lambda(\gamma_i \mid L_i, L_{i+1}).
\]
\end{definition}

We now recall the central notion introduced in \cite{Latremoliere16}.

\begin{definition}[{\cite[Definition 4.2]{Latremoliere16}}]
Let $(A, L_A)$ and $(B, L_B)$ be Leibniz quantum compact metric spaces. The \emph{quantum Gromov--Hausdorff propinquity} between $(A, L_A)$ and $(B, L_B)$ is defined by
\[
\Lambda((A, L_A), (B, L_B)) := \inf \left\{ \lambda(\Gamma) \mid \Gamma \in \text{\calligra Treks}((A, L_A) \rightarrow (B, L_B)) \right\}.
\]
\end{definition}

\begin{remark} \label{EstimateRemark} 
Any bridge $\gamma$ from $A$ to $B$ may be viewed as a trek of length one from $(A, L_A)$ to $(B, L_B)$. Therefore, by definition, the propinquity satisfies
\[
\Lambda((A, L_A), (B, L_B)) \leq \lambda(\gamma \mid L_A, L_B).
\]
\end{remark}

By \cite[Proposition 4.6]{Latremoliere16}, the quantum Gromov--Hausdorff propinquity is finite. Moreover, it is symmetric and satisfies the triangle inequality \cite[Proposition 4.7]{Latremoliere16}. Most importantly, distance zero characterizes quantum isometry: if $\Lambda((A, L_A), (B, L_B)) = 0$, then there exists a $\ast$-isomorphism from $A$ to $B$ preserving the respective Lip-norms \cite[Theorem 5.13]{Latremoliere16}.\\

By combining Remark~\ref{EstimateRemark} with estimates for maps induced by suitable positive definite functions on Coxeter groups, in the present section we investigate the dependence of the families of compact quantum metric spaces constructed in Section~\ref{sec:Iwahori--Hecke-Algebras} on the deformation parameter~$q$, with respect to the quantum Gromov--Hausdorff propinquity.

\vspace{3mm}


\subsection{Positive Definite Functions on Coxeter Groups}\label{subsec:Positive-Definite-Functions}

Let $(W, S)$ be a finite rank Coxeter system. As shown by Bo\.{z}ejko, Januszkiewicz, and Spatzier in \cite{BozejkoJanuszkiewiczSpatzier88}, the word length function associated with $S$ is \emph{conditionally negative definite}. By Schoenberg's theorem (see, e.g., \cite[Theorem D.11]{BrownOzawa08}), it follows that the map $W \ni \mathbf{w} \mapsto \kappa^{|\mathbf{w}|}$ is \emph{positive definite} for every $0 < \kappa \leq 1$, meaning that for any finite set of elements $\mathbf{w}_1, \ldots, \mathbf{w}_n \in W$, the matrix
\[
\left( \kappa^{|\mathbf{w}_i^{-1} \mathbf{w}_j|} \right)_{1 \leq i, j \leq n} \in M_n(\mathbb{C})
\]
is positive. Consequently, the associated \emph{Schur multiplier}
\[
m_{\kappa} : \mathcal{B}(\ell^2(W)) \to \mathcal{B}(\ell^2(W)), \quad
m_{\kappa}(x) := \left( \kappa^{|\mathbf{v}^{-1}\mathbf{w}|} x_{\mathbf{v}, \mathbf{w}} \right)_{\mathbf{v}, \mathbf{w} \in W}
\]
defines a unital, completely positive map on $\mathcal{B}(\ell^2(W))$. Here, we represent elements $x \in \mathcal{B}(\ell^2(W))$ as $W \times W$-matrices with entries given by $x_{\mathbf{v}, \mathbf{w}} := \langle x \delta_{\mathbf{v}}, \delta_{\mathbf{w}} \rangle$. For further background on positive definite functions and Schur multipliers, we refer to \cite{Davis08}.

We embed $\ell^{\infty}(W)$ into $\mathcal{B}(\ell^2(W))$ via pointwise multiplication, defining $f \delta_{\mathbf{w}} := f(\mathbf{w}) \delta_{\mathbf{w}}$ for all $f \in \ell^{\infty}(W)$ and $\mathbf{w} \in W$. With respect to this embedding, the Schur multiplier $m_{\kappa}$ is an \emph{$\ell^{\infty}(W)$-bimodule map}, i.e., $m_{\kappa}(f x g) = f m_{\kappa}(x) g$ for all $f, g \in \ell^{\infty}(W)$, $x \in \mathcal{B}(\ell^2(W))$. Moreover, for each $\mathbf{w} \in W$, we have $m_{\kappa}(T_{\mathbf{w}}^{(1)}) = \kappa^{|\mathbf{w}|} T_{\mathbf{w}}^{(1)}$, so that $m_{\kappa}$ restricts to a unital, completely positive map on the reduced group C$^*$-algebra $C_r^*(W) \subseteq \mathcal{B}(\ell^2(W))$.

Consider now the Dirac operator
\[
D_S := \sum_{n \in \mathbb{N}} n P_n
\]
introduced in Section~\ref{sec:Iwahori--Hecke-Algebras}, and let $C_c(W, \ell^{\infty}(W)) \subseteq \mathcal{B}(\ell^2(W))$ denote the $*$-algebra of finite sums of the form $\sum_{\mathbf{w} \in W} f_{\mathbf{w}} T_{\mathbf{w}}^{(1)}$ with $f_{\mathbf{w}} \in \ell^{\infty}(W)$. For every $0 < \kappa \leq 1$, the Schur multiplier $m_{\kappa}$ preserves this algebra:
\[
m_{\kappa}\left( C_c(W, \ell^{\infty}(W)) \right) \subseteq C_c(W, \ell^{\infty}(W)).
\]
Furthermore, since $T_s^{(q)} = T_s^{(1)} + p_s(q) Q_s$ for all $s \in S$ and $q \in \mathbb{R}_{>0}^{(W, S)}$, with $Q_s \in \ell^{\infty}(W)$, it follows that the Iwahori--Hecke algebra $\mathbb{C}_q[W]$ is contained in $C_c(W, \ell^{\infty}(W))$.

As stated in Section~\ref{sec:Iwahori--Hecke-Algebras}, the domain of $D_S$ is invariant under multiplication by elements of $\mathbb{C}_q[W]$, and the commutator $[D_S, x] := D_S x - x D_S$ is bounded for all $x \in \mathbb{C}_q[W]$. This property extends to all elements in $C_c(W, \ell^{\infty}(W))$, as demonstrated in the following lemma.

\begin{lemma}
Let $(W, S)$ be a finite rank Coxeter system, and let $x \in C_c(W, \ell^{\infty}(W))$. Then the commutator $[D_S, x]$ defines a bounded operator on $\ell^2(W)$, and
\[
\| [D_S, m_{\kappa}(x)] \| \leq \| [D_S, x] \|.
\]
\end{lemma}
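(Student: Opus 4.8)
The plan is to recognize that, in the matrix representation $x=(x_{\mathbf{v},\mathbf{w}})_{\mathbf{v},\mathbf{w}\in W}$ used above, \emph{both} the derivation $[D_S,\,\cdot\,]$ and the Schur multiplier $m_\kappa$ act by multiplying the $(\mathbf{v},\mathbf{w})$-entry by a scalar that depends only on $(\mathbf{v},\mathbf{w})$, and hence commute. Since $D_S$ is diagonal with $D_S\delta_{\mathbf{w}}=|\mathbf{w}|\delta_{\mathbf{w}}$, a short computation using self-adjointness of $D_S$ gives
\[
([D_S,x])_{\mathbf{v},\mathbf{w}} = \langle D_S x\delta_{\mathbf{v}},\delta_{\mathbf{w}}\rangle - \langle xD_S\delta_{\mathbf{v}},\delta_{\mathbf{w}}\rangle = (|\mathbf{w}|-|\mathbf{v}|)\,x_{\mathbf{v},\mathbf{w}}.
\]
Thus $[D_S,\,\cdot\,]$ is the (a priori only densely defined) Schur multiplier with symbol $(\mathbf{v},\mathbf{w})\mapsto|\mathbf{w}|-|\mathbf{v}|$, while $m_\kappa$ is the Schur multiplier with symbol $(\mathbf{v},\mathbf{w})\mapsto\kappa^{|\mathbf{v}^{-1}\mathbf{w}|}$.

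For boundedness I would reduce to a single term $x=f\,T^{(1)}_{\mathbf{w}_0}$ with $f\in\ell^\infty(W)$, since every element of $C_c(W,\ell^\infty(W))$ is a finite sum of such terms. As $f$ is diagonal it commutes with $D_S$, so $[D_S,x]=f\,[D_S,T^{(1)}_{\mathbf{w}_0}]$. Because $T^{(1)}_{\mathbf{w}_0}$ is the left-translation unitary $\delta_{\mathbf{v}}\mapsto\delta_{\mathbf{w}_0\mathbf{v}}$, one computes $[D_S,T^{(1)}_{\mathbf{w}_0}]=T^{(1)}_{\mathbf{w}_0}M_{\mathbf{w}_0}$, where $M_{\mathbf{w}_0}$ is the diagonal operator $\delta_{\mathbf{v}}\mapsto(|\mathbf{w}_0\mathbf{v}|-|\mathbf{v}|)\delta_{\mathbf{v}}$. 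The triangle inequality for word length forces $\bigl||\mathbf{w}_0\mathbf{v}|-|\mathbf{v}|\bigr|\le|\mathbf{w}_0|$, so $\|M_{\mathbf{w}_0}\|\le|\mathbf{w}_0|$ and $[D_S,x]$ extends from the dense domain of $D_S$ to a bounded operator with $\|[D_S,x]\|\le\|f\|_\infty|\mathbf{w}_0|$; summing over the finitely many terms of $x$ yields boundedness in general. (Equivalently, one may observe that $x$ has finite band-width with respect to the filtration $(P_n)_n$, that $[D_S,x]=\sum_k k\sum_j P_{j+k}xP_j$ is then a finite band sum, and that each band term has norm at most $\|x\|$ by two-sided orthogonality of the $P_n$.)

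For the inequality I would invoke the already-noted fact that $m_\kappa$ preserves $C_c(W,\ell^\infty(W))$, so $m_\kappa(x)$ again lies in this algebra and $[D_S,m_\kappa(x)]$ is bounded by the previous step. Comparing entries,
\[
([D_S,m_\kappa(x)])_{\mathbf{v},\mathbf{w}} = (|\mathbf{w}|-|\mathbf{v}|)\kappa^{|\mathbf{v}^{-1}\mathbf{w}|}x_{\mathbf{v},\mathbf{w}} = \kappa^{|\mathbf{v}^{-1}\mathbf{w}|}\,([D_S,x])_{\mathbf{v},\mathbf{w}} = (m_\kappa([D_S,x]))_{\mathbf{v},\mathbf{w}},
\]
so that $[D_S,m_\kappa(x)]=m_\kappa([D_S,x])$ as bounded operators. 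Since $m_\kappa$ is unital and completely positive, it is in particular contractive, and therefore
\[
\|[D_S,m_\kappa(x)]\| = \|m_\kappa([D_S,x])\| \le \|[D_S,x]\|,
\]
which is the assertion.

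The two entrywise identities are routine; the only point requiring genuine care — and the conceptual heart of the argument — is the first one, namely justifying that the formally defined commutator $[D_S,x]$ with the \emph{unbounded} operator $D_S$ really extends to a bounded operator with matrix entries $(|\mathbf{w}|-|\mathbf{v}|)x_{\mathbf{v},\mathbf{w}}$. Once $[D_S,\,\cdot\,]$ has been identified with a Schur multiplier, its commutation with $m_\kappa$ and the norm estimate follow automatically from the contractivity of unital positive maps, so I do not anticipate any serious obstacle beyond this bookkeeping.
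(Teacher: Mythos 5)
Your proof is correct and follows essentially the same route as the paper: both arguments establish boundedness by writing $[D_S,x]$ explicitly as an element of $C_c(W,\ell^{\infty}(W))$ (with the diagonal symbol $|\mathbf{w}_0\mathbf{v}|-|\mathbf{v}|$ bounded by $|\mathbf{w}_0|$ via the triangle inequality), then prove the intertwining identity $[D_S,m_{\kappa}(x)]=m_{\kappa}([D_S,x])$ and conclude from the contractivity of the unital completely positive map $m_{\kappa}$. The only cosmetic difference is that you verify the intertwining at the level of matrix entries, viewing $[D_S,\cdot\,]$ itself as a Schur multiplier, whereas the paper checks it on the decomposition $x=\sum_{\mathbf{w}}f_{\mathbf{w}}T^{(1)}_{\mathbf{w}}$ using the $\ell^{\infty}(W)$-bimodule property of $m_{\kappa}$.
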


\begin{proof}
Let $x = \sum_{\mathbf{w} \in W} f_{\mathbf{w}} T_{\mathbf{w}}^{(1)}$ be a finite sum with $f_{\mathbf{w}} \in \ell^{\infty}(W)$. Define, for each $\mathbf{w} \in W$, a function $\varphi_{\mathbf{w}} \in \ell^{\infty}(W)$ by $\varphi_{\mathbf{w}}(\mathbf{v}) := |\mathbf{v}| - |\mathbf{w}^{-1} \mathbf{v}|$, for all $\mathbf{v} \in W$. Then, for each $\mathbf{v} \in W$,
\[
[D_S, x] \delta_{\mathbf{v}} 
= \sum_{\mathbf{w} \in W} \left( |\mathbf{w} \mathbf{v}| - |\mathbf{v}| \right) f_{\mathbf{w}}(\mathbf{w} \mathbf{v}) \delta_{\mathbf{w} \mathbf{v}} 
= \sum_{\mathbf{w} \in W} \varphi_{\mathbf{w}}(\mathbf{w} \mathbf{v}) f_{\mathbf{w}}(\mathbf{w} \mathbf{v}) \delta_{\mathbf{w} \mathbf{v}}.
\]
Thus, the commutator can be expressed as
\[
[D_S, x] = \sum_{\mathbf{w} \in W} \varphi_{\mathbf{w}} f_{\mathbf{w}} T_{\mathbf{w}}^{(1)} \in C_c(W, \ell^{\infty}(W)),
\]
establishing boundedness.

For the second claim, we observe that $m_{\kappa}$ is a contraction and satisfies
\[
[D_S, m_{\kappa}(x)] = \sum_{\mathbf{w} \in W} \kappa^{|\mathbf{w}|} \varphi_{\mathbf{w}} f_{\mathbf{w}} T_{\mathbf{w}}^{(1)} = m_{\kappa}([D_S, x]),
\]
which completes the proof.
\end{proof}

We now establish an analogue of Theorem~\ref{HaagerupTypeCondition}, involving the family of unital completely positive maps $(m_{\kappa})_{0 < \kappa \leq 1}$. This result will enable us, in the next step, to derive appropriate estimates on the norms $\Vert m_{\kappa}(x - x^{(q')})\Vert$ and $\Vert [D_S, m_{\kappa}(x - x^{(q')})] \Vert$ for $q, q' \in \mathbb{R}_{>0}^{(W, S)}$ and $x \in \mathbb{C}_q[W]$, as will be seen in Proposition~\ref{MagnitudeEstimate}.

\begin{lemma} \label{HaagerupTypeCondition2}
Let $(W,S)$ be a finite rank, right-angled Coxeter system, and suppose that the graph $\Gamma$ defined in Theorem \ref{HaagerupTypeCondition} contains no induced square. Then there exists a constant $K > 0$ such that
\[
 \Vert P_{i} m_{\kappa}(x - x^{(q^{\prime})}) P_{j}  \Vert 
\leq \kappa^{|i-j|} K C_{q,q^{\prime}} \Vert x \delta_{e} \Vert_{2}
\]
for all $q, q^{\prime} \in \mathbb{R}_{>0}^{(W,S)}$, $i,j,n \in \mathbb{N}$, and $x \in \chi_{n}(\mathbb{C}_{q}[W])$, where
\[
C_{q,q^{\prime}} := \max_{\Gamma_{0} \in \emph{Cliq}(\Gamma)} \left| \prod_{t \in V\Gamma_{0}} p_{t}(q) - \prod_{t \in V\Gamma_{0}} p_{t}(q^{\prime}) \right|,
\]
and $x^{(q^{\prime})} := \sum_{\mathbf{w} \in W} x(\mathbf{w}) T_{\mathbf{w}}^{(q^{\prime})}$.
\end{lemma}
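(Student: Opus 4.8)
The plan is to mirror the proof of Theorem~\ref{HaagerupTypeCondition}(1), with two modifications: the diagonal coefficient $\prod_{t} p_t(q)$ is replaced by the difference $\prod_t p_t(q) - \prod_t p_t(q')$, and one must track the scalar weight introduced by the Schur multiplier $m_\kappa$.

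First I would write $y := x - x^{(q')} = \sum_{|\mathbf{u}| = n} x(\mathbf{u})\bigl(T_{\mathbf{u}}^{(q)} - T_{\mathbf{u}}^{(q')}\bigr)$ and apply the generator decomposition of Proposition~\ref{GeneratorDecomposition} to each $T_{\mathbf{u}}^{(q)}$ and $T_{\mathbf{u}}^{(q')}$. The crucial structural observation, supplied by Remark~\ref{OperatorRemark}, is that the creation and annihilation operators $Q_s T_s^{(1)}Q_s^\perp$ and $Q_s^\perp T_s^{(1)} Q_s$, the projections $Q_{V(\Gamma_0)}$, and the permutations $\sigma_{l,k,\Gamma_0,\Gamma_1,\Gamma_2}$ are all independent of $q$; the entire $q$-dependence of the decomposition is carried by the scalar diagonal factor $\prod_{t \in V\Gamma_0} p_t(q)$. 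Consequently, in the difference $T_{\mathbf{u}}^{(q)} - T_{\mathbf{u}}^{(q')}$ every summand survives with its creation and annihilation parts unchanged and only the diagonal coefficient replaced by $\prod_{t\in V\Gamma_0} p_t(q) - \prod_{t\in V\Gamma_0} p_t(q')$, whose absolute value is bounded by $C_{q,q'}$.

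Next, fixing $\xi \in P_j\ell^2(W)$ with $\|\xi\|_2 \le 1$, I would expand the coefficients $(P_i m_\kappa(y)\xi)(\mathbf{w})$ for $|\mathbf{w}| = i$ exactly as in the proof of Theorem~\ref{HaagerupTypeCondition}, writing each matrix entry $y_{\mathbf{v},\mathbf{w}}$ (input word $\mathbf{v}$ with $|\mathbf{v}| = j$, output word $\mathbf{w}$ with $|\mathbf{w}| = i$) through the decomposition above. Since $m_\kappa$ acts entrywise, the only new feature is the extra factor $\kappa^{|\mathbf{v}^{-1}\mathbf{w}|}$ multiplying each entry. As $0 < \kappa \le 1$ and the triangle inequality for the word length gives $|\mathbf{v}^{-1}\mathbf{w}| \ge \bigl| |\mathbf{w}| - |\mathbf{v}| \bigr| = |i - j|$, each such factor is uniformly bounded by $\kappa^{|\mathbf{v}^{-1}\mathbf{w}|} \le \kappa^{|i-j|}$. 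Pulling this scalar out and using $|\prod_t p_t(q) - \prod_t p_t(q')| \le C_{q,q'}$ in place of $C_q$, I obtain the exact analogue of the pointwise estimate~(\ref{eq:TupleDefinition}), now carrying the prefactor $\kappa^{|i-j|} C_{q,q'}$ rather than $C_q$.

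Finally I would apply the Cauchy--Schwarz inequality and the reindexing from the proof of Theorem~\ref{HaagerupTypeCondition}, invoking the uniform counting bound $R_{\mathbf{x},\mathbf{y}}(i) \le K$ of Lemma~\ref{SetSize}, to conclude
\[
\|P_i m_\kappa(y)\xi\|_2 \le \kappa^{|i-j|}\sqrt{K}\,(\#\mathrm{Cliq}(\Gamma))^2\, C_{q,q'}\,\|x\delta_e\|_2\,\|\xi\|_2 .
\]
Taking the supremum over all such $\xi$ yields the claim with $K' := \sqrt{K}\,(\#\mathrm{Cliq}(\Gamma))^2$. I expect the only genuinely delicate point to be the bookkeeping in the first two steps: one must check that inserting $m_\kappa$ does not disturb the generator decomposition, which is legitimate precisely because $m_\kappa$ is an $\ell^\infty(W)$-bimodule map acting entrywise, so it commutes with forming $y_{\mathbf{v},\mathbf{w}}$, and that the $q$-dependence is confined to the scalar diagonal weights, so that the difference isolates cleanly into the factor $C_{q,q'}$. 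All remaining estimates are then verbatim repetitions of those already carried out for Theorem~\ref{HaagerupTypeCondition}.
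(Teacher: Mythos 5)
Your proposal is correct and follows essentially the same route as the paper's own proof: both isolate the $q$-dependence of the decomposition from Proposition~\ref{GeneratorDecomposition} in the scalar diagonal coefficients (so the difference $x - x^{(q')}$ carries the factor bounded by $C_{q,q'}$), bound the Schur-multiplier weight by $\kappa^{|i-j|}$ using $|\mathbf{u}\mathbf{u}'| \geq \bigl||\mathbf{u}|-|\mathbf{u}'|\bigr| = |i-j|$, and then repeat the Cauchy--Schwarz and Lemma~\ref{SetSize} counting argument from Theorem~\ref{HaagerupTypeCondition}. The only cosmetic difference is that the paper keeps the weight $\kappa^{|\mathbf{w}^{(1)}\mathbf{w}^{(3)}|}$ attached to each summand of the decomposition and discharges it at the Cauchy--Schwarz step, whereas you bound the entrywise factor by $\kappa^{|i-j|}$ immediately; since all subsequent estimates proceed through absolute values, this is immaterial.
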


\begin{proof}
The statement follows similarly to the proof of Theorem \ref{HaagerupTypeCondition}. Let $\xi \in P_{j} \ell^{2}(W)$. By the preceding discussion, along with Proposition \ref{GeneratorDecomposition} and Remark \ref{OperatorRemark}, we have
\begin{eqnarray*}
& & P_{i}m_{\kappa}(x-x^{(q^{\prime})})\xi \\
&=& \sum_{(l,k,\Gamma_{0},\Gamma_{1},\Gamma_{2})}\sum_{\mathbf{w}\in W:|\mathbf{w}|=n}\kappa^{\left|\mathbf{w}_{l,K_{0},K_{1},K_{2}}^{(1)}\mathbf{w}_{l,K_{0},K_{1},K_{2}}^{(3)}\right|}\left(\prod_{t\in V\Gamma_{0}}p_{t}(q)-\prod_{t\in V\Gamma_{0}}p_{t}(q^{\prime})\right) \\
& & \qquad \times \left(\sum\left\{ \left.x(\mathbf{w})\xi(\mathbf{v})\delta_{\mathbf{w}_{l,K_{0},K_{1},K_{2}}^{(1)}\mathbf{w}_{l,K_{0},K_{1},K_{2}}^{(3)}\mathbf{v}}\,\right| \mathbf{v}\in W,\left|\mathbf{v}\right|=j, (\mathbf{w}_{l,K_{0},K_{1},K_{2}}^{(3)})^{-1}\leq\mathbf{v} \right.\right.\\
& & \qquad  \qquad  \qquad \qquad  \left.\left.\mathbf{w}_{l,K_{0},K_{1},K_{2}}^{(2)}\leq\mathbf{w}_{l,K_{0},K_{1},K_{2}}^{(3)}\mathbf{v},\mathbf{w}_{l,K_{0},K_{1},K_{2}}^{(1)}\leq\mathbf{w}_{l,K_{0},K_{1},K_{2}}^{(1)}\mathbf{w}_{l,K_{0},K_{1},K_{2}}^{(3)}\mathbf{v}\right\} \right),
\end{eqnarray*}
where the outer sum runs over all tuples $(l,\Gamma_{0},\Gamma_{1},\Gamma_{2})$ with $0\leq l\leq n$, $\Gamma_{0}\in\mathrm{Cliq}(\Gamma,l)$, and $(\Gamma_{1},\Gamma_{2})\in\mathrm{Comm}(\Gamma_{0})$, and where the elements $\mathbf{w}_{l,\Gamma_{0},\Gamma_{1},\Gamma_{2}}^{(1)}$, $\mathbf{w}_{l,\Gamma_{0},\Gamma_{1},\Gamma_{2}}^{(2)}$ and $\mathbf{w}_{l,\Gamma_{0},\Gamma_{1},\Gamma_{2}}^{(3)}$ are defined as in the proof of Theorem \ref{HaagerupTypeCondition}.

As before, we obtain
\begin{equation}
\begin{split} & \left|(P_{i}a\xi)(\mathbf{v})\right|\\
 & \leq(\#\text{Cliq}(\Gamma))^{2}C_{q,q^{\prime}}\sum_{\Gamma_{0}\in\text{Cliq}(\Gamma)}\left(\sum\left\{ \left.\kappa^{|\mathbf{u}\mathbf{u}^{\prime}|}|x(\mathbf{u}V(\Gamma_{0})\mathbf{u}^{\prime})||\xi((\mathbf{u}\mathbf{u}^{\prime})^{-1}\mathbf{v})|\:\right|\right.\,\right.\mathbf{u},\mathbf{u}^{\prime}\in W\text{ with }\\
 & \quad \qquad\qquad\qquad\qquad\qquad\qquad\left|\mathbf{u}\right|=2^{-1}(i-j+n-\#\Gamma_{0}),\left|\mathbf{u}^{\prime}\right|=n-2^{-1}(i-j+n+\#\Gamma_{0}),\\
 & \qquad\qquad\qquad\qquad\qquad\qquad\qquad\qquad\qquad\qquad\qquad\left.\left.(\mathbf{u}^{\prime})^{-1}\leq(\mathbf{u}\mathbf{u}^{\prime})^{-1}\mathbf{v},V(\Gamma_{0})\leq\mathbf{u}^{-1}\mathbf{v},\mathbf{u}\leq\mathbf{v}\right\} \right).
\end{split}
\label{eq:TupleDefinition-1}
\end{equation}
For such $\mathbf{u}, \mathbf{u}'$, note that $|\mathbf{u} \mathbf{u}'| \geq |\,|\mathbf{u}| - |\mathbf{u}'|\,| = |i-j|$. Applying the Cauchy–Schwarz inequality yields
\begin{eqnarray*}
\|P_{i}x\xi\|_{2}^{2} &\leq& \kappa^{2|i-j|}(\#\mathrm{Cliq}(\Gamma))^{4}C_{q,q^{\prime}}^{2}\sum_{\mathbf{v}\in W:|\mathbf{v}|=i}\left(\sum_{(\Gamma_{0},\mathbf{u},\mathbf{u}')}|x(\mathbf{u}V(\Gamma_{0})\mathbf{u}')|^{2}\right)\left(\sum_{(\Gamma_{0},\mathbf{u},\mathbf{u}')}|\xi((\mathbf{u}\mathbf{u}')^{-1}\mathbf{v})|^{2}\right) \\
&\leq& \kappa^{2|i-j|}(\#\mathrm{Cliq}(\Gamma))^{4}C_{q,q^{\prime}}^{2}\sum_{\mathbf{x}\in W:|\mathbf{x}|=n}\sum_{\mathbf{y}\in W:|\mathbf{y}|=j}R_{\mathbf{x},\mathbf{y}}\,|x(\mathbf{x})|^{2}\,|\xi(\mathbf{y})|^{2},
\end{eqnarray*}
where the sums in the first line run over tuples as in the sums in \eqref{eq:TupleDefinition-1} and where $R_{\mathbf{x},\mathbf{y}}$ is as in Lemma \ref{SetSize}. Hence, using the constant $K$ from Lemma \ref{SetSize},
\[
\|P_{i} x \xi\|_{2} \leq \kappa^{|i-j|} \sqrt{K} (\#\text{Cliq}(\Gamma))^{2} C_{q,q^{\prime}} \|x \delta_{e}\|_{2} \|\xi\|_{2},
\]
as required.
\end{proof}

\begin{proposition} \label{MagnitudeEstimate}
Let $(W,S)$ be a finite rank, right-angled Coxeter system, and suppose the graph $\Gamma$ from Theorem \ref{HaagerupTypeCondition} contains no induced square. Then there exists a constant $K > 0$ such that for all $q, q^{\prime} \in \mathbb{R}_{>0}^{(W,S)}$, $0 < \kappa \leq 1$, and $x \in \mathbb{C}_{q}[W]$, the following estimates hold:
\[
\Vert m_{\kappa}(x - x^{(q^{\prime})}) \Vert \leq \frac{K}{1 - \kappa} C_{q,q^{\prime}} L_{S}(x), \quad
\Vert [D_{S}, m_{\kappa}(x - x^{(q^{\prime})})] \Vert \leq \frac{\kappa K}{(1 - \kappa)^2} C_{q,q^{\prime}} L_{S}(x),
\]
where $C_{q,q^{\prime}}$ is as in Lemma \ref{HaagerupTypeCondition2}, and $x^{(q^{\prime})} := \sum_{\mathbf{w} \in W} x(\mathbf{w}) T_{\mathbf{w}}^{(q^{\prime})}$.
\end{proposition}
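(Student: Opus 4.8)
The plan is to derive both estimates from the blockwise bound of Lemma~\ref{HaagerupTypeCondition2} by organizing the operator $y := m_{\kappa}(x - x^{(q')})$ along its ``diagonals'' and summing the resulting geometric series. Since $x \in \mathbb{C}_{q}[W]$ has finite support in $W$, I may assume $0 < \kappa < 1$ (for $\kappa = 1$ both right-hand sides are infinite), write $x = \sum_{n} x_{n}$ with $x_{n} := \chi_{n}(x)$ as a finite sum, and observe that $x_{0} - x_{0}^{(q')} = 0$ because $\chi_{0}(x) = \tau_{q}(x)\,1$ is fixed by the deformation. By linearity of $m_{\kappa}$ we have $y = \sum_{n \geq 1} m_{\kappa}(x_{n} - x_{n}^{(q')})$, and since each $x_{n}$ lies in $\chi_{n}(\mathbb{C}_{q}[W])$, Lemma~\ref{HaagerupTypeCondition2} applies to every summand.

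First I would decompose $y$ by diagonals: for $k \in \mathbb{Z}$ set $y_{k} := \sum_{i - j = k} P_{i} y P_{j}$. Because the blocks $P_{i} y P_{i-k}$ have pairwise orthogonal ranges and pairwise orthogonal initial spaces, one has $\|y_{k}\| = \sup_{i-j=k} \|P_{i} y P_{j}\|$. Writing $P_{i} y P_{j} = \sum_{n \geq 1} P_{i} m_{\kappa}(x_{n} - x_{n}^{(q')}) P_{j}$ and applying the triangle inequality together with Lemma~\ref{HaagerupTypeCondition2} gives the uniform estimate
\[
\|P_{i} y P_{j}\| \;\leq\; \kappa^{|i-j|}\, K\, C_{q,q'} \sum_{n \geq 1} \|x_{n} \delta_{e}\|_{2},
\]
and hence $\|y_{k}\| \leq \kappa^{|k|} K C_{q,q'} \sum_{n \geq 1} \|x_{n}\delta_{e}\|_{2}$. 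The only analytic input beyond this is the Cauchy--Schwarz estimate
\[
\sum_{n \geq 1} \|x_{n}\delta_{e}\|_{2} \;\leq\; \Big( \sum_{n \geq 1} \tfrac{1}{n^{2}} \Big)^{1/2} \Big( \sum_{n \geq 1} n^{2} \|x_{n}\delta_{e}\|_{2}^{2} \Big)^{1/2} \;=\; \tfrac{\pi}{\sqrt{6}}\, \|[D_{S}, x]\delta_{e}\|_{2} \;\leq\; \tfrac{\pi}{\sqrt{6}}\, L_{S}(x),
\]
where I have used \eqref{eq:2-Norm} to identify $\sum_{n} n^{2}\|x_{n}\delta_{e}\|_{2}^{2} = \|[D_{S},x]\delta_{e}\|_{2}^{2}$.

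For the first estimate I would then sum the diagonals, $\|y\| \leq \sum_{k \in \mathbb{Z}} \|y_{k}\| \leq \big( \sum_{k \in \mathbb{Z}} \kappa^{|k|} \big) K C_{q,q'} \sum_{n \geq 1} \|x_{n}\delta_{e}\|_{2}$, and use $\sum_{k \in \mathbb{Z}} \kappa^{|k|} = \frac{1+\kappa}{1-\kappa} \leq \frac{2}{1-\kappa}$, which combined with the Cauchy--Schwarz bound yields $\frac{K}{1-\kappa} C_{q,q'} L_{S}(x)$ after absorbing $2\pi/\sqrt{6}$ into $K$. For the second estimate I would exploit that $D_{S} P_{i} = i P_{i}$, so the commutator acts diagonally as $[D_{S}, y] = \sum_{k \in \mathbb{Z}} k\, y_{k}$ (its boundedness and the domain-invariance being guaranteed by the lemma preceding Lemma~\ref{HaagerupTypeCondition2}, since $y \in C_{c}(W, \ell^{\infty}(W))$). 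Then $\|[D_{S}, y]\| \leq \sum_{k} |k|\, \|y_{k}\| \leq \big( \sum_{k \in \mathbb{Z}} |k| \kappa^{|k|} \big) K C_{q,q'} \sum_{n \geq 1} \|x_{n}\delta_{e}\|_{2}$, and the identity $\sum_{k \in \mathbb{Z}} |k| \kappa^{|k|} = \frac{2\kappa}{(1-\kappa)^{2}}$ produces precisely the factor $\frac{\kappa}{(1-\kappa)^{2}}$, again after renaming the constant.

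The proof is in essence a bookkeeping assembly of Lemma~\ref{HaagerupTypeCondition2}, so the substantive work has already been done there; the points to handle carefully are the diagonal decomposition—specifically, verifying the orthogonality that turns $\|y_{k}\|$ into a supremum rather than a sum of block norms—and confirming that every sum over $n$ and $k$ is finite, which holds because $x$, and therefore $y$, has finite support. The two geometric-series identities $\sum_{k} \kappa^{|k|}$ and $\sum_{k} |k|\kappa^{|k|}$ are exactly what generate the two distinct $\kappa$-dependences in the statement, so the main subtlety is tracking the extra factor $|k|$ contributed by the commutator.
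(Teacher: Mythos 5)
Your proposal is correct and follows essentially the same route as the paper's proof: both decompose $m_{\kappa}(x-x^{(q')})$ into diagonals, bound each block via Lemma~\ref{HaagerupTypeCondition2} with norm of a diagonal equal to the supremum of its block norms, control $\sum_{n}\|\chi_{n}(x)\delta_{e}\|_{2}$ by Cauchy--Schwarz against $\sum n^{-2}$ and \eqref{eq:2-Norm}, and then sum the geometric series $\sum_{k}\kappa^{|k|}$ and $\sum_{k}|k|\kappa^{|k|}$ to produce the two $\kappa$-dependences. Your explicit remarks on the vanishing of the $n=0$ term and the finiteness of all sums are minor bookkeeping points the paper leaves implicit, but the substance is identical.
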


\begin{proof}
Let $K$ be the constant from Lemma \ref{HaagerupTypeCondition2}. For any $x \in \mathbb{C}_{q}[W]$, write
\[
m_{\kappa}(x - x^{(q^{\prime})}) = \sum_{|i| \leq N} \sum_{j \geq \max\{0, i\}} P_{j} m_{\kappa}(x - x^{(q^{\prime})}) P_{j-i}
\]
for sufficiently large $N \in \mathbb{N}$, with the sum converging in the strong operator topology. Lemma \ref{HaagerupTypeCondition2} implies
\begin{align*}
\Vert m_{\kappa}(x - x^{(q^{\prime})}) \Vert 
&\leq \sum_{|i| \leq N} \sup_{j \geq \max\{0, i\}} \Vert P_{j} m_{\kappa}(x - x^{(q^{\prime})}) P_{j-i} \Vert \\
&\leq KC_{q,q^{\prime}} \left( \sum_{|i| \leq N} \kappa^{|i|} \right) \left( \sum_{n = 0}^{\infty} \Vert \chi_{n}(x) \delta_{e} \Vert_{2} \right).
\end{align*}
Using the bound $\sum_{|i| \leq N} \kappa^{|i|} \leq 2/(1 - \kappa)$, the Cauchy–Schwarz inequality, and \eqref{eq:2-Norm}, we find
\[
\Vert m_{\kappa}(x - x^{(q^{\prime})}) \Vert 
\leq \frac{2\pi}{\sqrt{6}(1 - \kappa)} K C_{q,q^{\prime}} \Vert [D_{S}, x] \delta_{e} \Vert_{2}
\leq \frac{2\pi}{\sqrt{6}(1 - \kappa)} K C_{q,q^{\prime}} L_{S}(x).
\]

For the second estimate, observe that
\[
[D_{S}, m_{\kappa}(x - x^{(q^{\prime})})] 
= \sum_{|i| \leq N} \sum_{j \geq \max\{0, i\}} i P_{j} m_{\kappa}(x - x^{(q^{\prime})}) P_{j-i}.
\]
Arguing as above,
\begin{align*}
\Vert [D_{S}, m_{\kappa}(x - x^{(q^{\prime})})] \Vert 
&\leq KC_{q,q^{\prime}} \left( \sum_{|i| \leq N} |i| \kappa^{|i|} \right) \left( \sum_{n = 0}^{\infty} \Vert \chi_{n}(x) \delta_{e} \Vert_{2} \right) \\
&\leq \frac{2\pi \kappa}{\sqrt{6}(1 - \kappa)^2} K C_{q,q^{\prime}} \Vert [D_{S}, x] \delta_{e} \Vert_{2} \\
&\leq  \frac{2\pi \kappa}{\sqrt{6}(1 - \kappa)^2} K C_{q,q^{\prime}} L_{S}(x),
\end{align*}
which completes the proof.
\end{proof}

\vspace{3mm}


\subsection{Continuity at \texorpdfstring{$q=1$}{q=1}}

Let $(W, S)$ be a finite rank, right-angled Coxeter system, and assume that the graph $\Gamma$ appearing in Theorem~\ref{HaagerupTypeCondition} contains no induced squares. Then, by Theorem~\ref{CQMSStatement}, for every deformation parameter $q \in \mathbb{R}_{>0}^{(W,S)}$, the associated pair $(C_{r,q}^{\ast}(W), L_{S}^{(q)})$ forms a compact quantum metric space. It is straightforward to verify that $L_{S}^{(q)}$ is lower semi-continuous and satisfies the Leibniz property, implying that the pair defines a Leibniz quantum compact metric space.

In particular, one may consider the quantum Gromov--Hausdorff propinquity between these spaces, which provides a natural framework for analyzing the continuity of the deformation with respect to the parameter $q$.

\begin{question}
Let $(W, S)$ be a finite rank, right-angled Coxeter system, and suppose that the graph $\Gamma$ defined in Theorem~\ref{HaagerupTypeCondition} contains no induced square. Is the map
\[
\mathbb{R}_{>0}^{(W, S)} \ni q \longmapsto (C_{r,q}^{\ast}(W), L_{S}^{(q)})
\]
continuous with respect to the quantum Gromov--Hausdorff propinquity?
\end{question}

The results established in the preceding subsection allow us to affirm the continuity of this map at the point $q = 1$.

\begin{theorem} \label{ConvergenceTheorem}
Let $(W, S)$ be a finite rank, right-angled Coxeter system, and assume that the graph $\Gamma$ defined in Theorem~\ref{HaagerupTypeCondition} contains no induced square. Then,
\[
(C_{r,q}^{\ast}(W), L_{S}^{(q)}) \longrightarrow (C_{r}^{\ast}(W), L_{S}^{(1)})
\]
in the quantum Gromov--Hausdorff propinquity as $q \to 1$.
\end{theorem}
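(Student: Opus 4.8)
The plan is to produce, for each $q$ in a fixed compact neighbourhood $\mathcal{K} \subseteq \mathbb{R}_{>0}^{(W,S)}$ of $1$, a single bridge from $(C_{r,q}^{\ast}(W), L_S^{(q)})$ to $(C_r^{\ast}(W), L_S^{(1)})$ and to show that its length tends to $0$ as $q \to 1$; by Remark~\ref{EstimateRemark} this bounds the propinquity. Since both Hecke $\mathrm{C}^{\ast}$-algebras act concretely and faithfully on $\ell^2(W)$, I take the ambient algebra $Z$ to be $\mathcal{B}(\ell^2(W))$ (or the unital $\mathrm{C}^{\ast}$-subalgebra generated by $C_{r,q}^{\ast}(W) \cup C_r^{\ast}(W)$), with pivot $\omega := 1$ and $\pi_q, \pi_1$ the defining inclusions. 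Because $\omega = 1$, the $1$-level set equals all of $\mathcal{S}(Z)$; as each state on a unital $\mathrm{C}^{\ast}$-subalgebra extends to $Z$, one has $\mathcal{S}_1(\omega)\circ\pi_q = \mathcal{S}(C_{r,q}^{\ast}(W))$ and likewise for $\pi_1$, so $\mathfrak{height}(\gamma) = 0$. Hence $\lambda(\gamma) = \mathfrak{reach}(\gamma) = \mathrm{Haus}\big(\mathcal{L}_1^{(q)}, \mathcal{L}_1^{(1)}\big)$ in operator norm, and the whole task reduces to driving this Hausdorff distance to $0$. (The Leibniz and lower semicontinuity conditions needed to form the propinquity are noted in the text preceding the statement.)

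For the first inclusion, given $a \in \mathcal{L}_1^{(q)}$ I split $a = a_0 + \tau_q(a)1$ with $a_0 \in \mathcal{B}_1^{(q)}$ and match it with $b := (1+\eta)^{-1} m_\kappa(a_0^{(1)}) + \tau_q(a)1 \in \mathbb{C}[W]$, where $a_0^{(1)} := \sum_{\mathbf{w}} a_0(\mathbf{w}) T_{\mathbf{w}}^{(1)}$, the auxiliary parameter satisfies $0 < \kappa < 1$, and $\eta$ is the normalising constant below. Since $m_\kappa(T_{\mathbf{w}}^{(1)}) = \kappa^{|\mathbf{w}|} T_{\mathbf{w}}^{(1)}$ and $[D_S, m_\kappa(\cdot)] = m_\kappa([D_S,\cdot])$, Proposition~\ref{MagnitudeEstimate} with $q' = 1$ (so $C_{q,1} = \max_{\Gamma_0} |\prod_t p_t(q)| \to 0$) gives $\|[D_S, m_\kappa(a_0 - a_0^{(1)})]\| \leq \tfrac{\kappa K}{(1-\kappa)^2} C_{q,1} =: \eta$, whence $L_S^{(1)}\big(m_\kappa(a_0^{(1)})\big) \leq \|m_\kappa([D_S, a_0])\| + \eta \leq 1+\eta$, so the factor $(1+\eta)^{-1}$ places $b$ in $\mathcal{L}_1^{(1)}$. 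As the constants cancel, $a - b = a_0 - (1+\eta)^{-1} m_\kappa(a_0^{(1)})$, and I estimate $a_0 - m_\kappa(a_0^{(1)}) = (I - m_\kappa)(a_0) + m_\kappa(a_0 - a_0^{(1)})$: the second term is $\leq \tfrac{K}{1-\kappa} C_{q,1}$ by Proposition~\ref{MagnitudeEstimate}, while $\|(I-m_\kappa)(a_0)\|$ is handled by Proposition~\ref{UnityBoundedness}, which makes $\mathbf{B} := \bigcup_{q\in\mathcal{K}} \mathcal{B}_1^{(q)}$ totally bounded; approximating $a_0$ by a finite net of (finitely supported) elements, on each of which $m_\kappa \to I$ in norm as $\kappa \to 1$, yields $\varepsilon(\kappa) := \sup_{a_0 \in \mathbf{B}} \|(I-m_\kappa)(a_0)\| \to 0$. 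Total boundedness also gives a uniform bound $\|a_0\| \leq R$, controlling the rescaling error $\tfrac{\eta}{1+\eta}\|m_\kappa(a_0^{(1)})\| \leq \eta(R + \varepsilon(\kappa) + \tfrac{K}{1-\kappa}C_{q,1})$.

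The reverse inclusion is symmetric: for $b \in \mathcal{L}_1^{(1)}$, write $b = b_0 + \tau_1(b)1$ with $b_0 \in \mathcal{B}_1^{(1)}$ and match it with a rescaling of $m_\kappa(b_0)^{(q)} := \sum_{\mathbf{w}} \kappa^{|\mathbf{w}|} b_0(\mathbf{w}) T_{\mathbf{w}}^{(q)} \in \mathbb{C}_q[W]$ (plus $\tau_1(b)1$, which lies in both algebras as $T_e^{(q)} = T_e^{(1)} = 1$). The single estimate not furnished by Proposition~\ref{MagnitudeEstimate} is a bound on the \emph{bare} difference $\|m_\kappa(b_0) - m_\kappa(b_0)^{(q)}\|$ and its $D_S$-commutator. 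Decomposing by word length, $m_\kappa(b_0) - m_\kappa(b_0)^{(q)} = \sum_n \kappa^n\big(\chi_n(b_0) - \chi_n(b_0)^{(q)}\big)$, and $\chi_n(b_0) - \chi_n(b_0)^{(q)}$ is exactly the negative of the part of the decomposition of $\chi_n(b_0)^{(q)} \in \chi_n(\mathbb{C}_q[W])$ carrying a clique of size $l \geq 1$ (Proposition~\ref{GeneratorDecomposition}). Rerunning the proof of Theorem~\ref{HaagerupTypeCondition}(1) on this part — where the clique factor is now bounded by $C_{q,1}$ rather than $C_q$ — together with Lemma~\ref{SetSize} gives $\|P_i(\chi_n(b_0) - \chi_n(b_0)^{(q)}) P_j\| \leq K' C_{q,1} \|\chi_n(b_0)\delta_e\|_2$, nonzero only when $|i-j| \leq n$. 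Weighting by $\kappa^n$, summing over $i,j$ as in Proposition~\ref{MagnitudeEstimate}, and using $\sum_n n^2 \|\chi_n(b_0)\delta_e\|_2^2 = \|[D_S, b_0]\delta_e\|_2^2 \leq 1$ bounds both $\|m_\kappa(b_0) - m_\kappa(b_0)^{(q)}\|$ and its commutator by $C_{q,1}$ times a fixed negative power of $(1-\kappa)$; the term $\|(I-m_\kappa)(b_0)\|$ is again controlled by total boundedness, and one rescales as before to land in $\mathcal{L}_1^{(q)}$.

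Combining the two directions, $\mathrm{Haus}\big(\mathcal{L}_1^{(q)}, \mathcal{L}_1^{(1)}\big) \leq \varepsilon(\kappa) + O\!\big(C_{q,1}(1-\kappa)^{-c}\big)$ for a fixed $c > 0$, where $\varepsilon(\kappa) \to 0$ as $\kappa \to 1$. The main obstacle is precisely that these two requirements pull against each other: $m_\kappa$ approximates the identity only in the limit $\kappa \to 1$, whereas the $q$-to-$1$ conversion estimates blow up like a negative power of $(1-\kappa)$. I resolve this by coupling the limits, letting $\kappa = \kappa(q) \to 1$ slowly — for instance $1 - \kappa(q) := C_{q,1}^{1/(c+1)}$ — so that $C_{q,1}(1-\kappa(q))^{-c} = C_{q,1}^{1/(c+1)} \to 0$ while still $\varepsilon(\kappa(q)) \to 0$. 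Then $\mathfrak{reach}(\gamma) \to 0$, hence $\Lambda\big((C_{r,q}^{\ast}(W), L_S^{(q)}),(C_r^{\ast}(W), L_S^{(1)})\big) \to 0$ as $q \to 1$, which is the assertion. The only genuinely new computation beyond the results already in hand is the reverse (intrinsic-decay) estimate of the third paragraph; everything else is assembled from Propositions~\ref{MagnitudeEstimate} and~\ref{UnityBoundedness}.
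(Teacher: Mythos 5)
Your proof is correct, and its core coincides with the paper's: the same pivot-one bridge $(\mathcal{B}(\ell^{2}(W)),1,\pi_{q},\pi_{1})$, the same observation that the height vanishes so that the bridge length reduces to $\mathrm{Haus}(\mathcal{L}_{1}^{(q)},\mathcal{L}_{1}^{(1)})$, and, for matching $\mathcal{L}_{1}^{(q)}$ into a neighbourhood of $\mathcal{L}_{1}^{(1)}$, the same mechanism (split off the trace, apply $m_{\kappa}$, pass to the coefficients realized at parameter $1$, control the errors by Proposition~\ref{MagnitudeEstimate}, rescale by $(1+\eta)^{-1}$, and use total boundedness of $\mathbf{B}$ from Proposition~\ref{UnityBoundedness} for the term $\|(I-m_{\kappa})(a_{0})\|$). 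Where you genuinely diverge is the opposite inclusion. The paper handles $\sup_{x\in\mathcal{L}_{1}^{(1)}}\mathrm{dist}(x,\mathcal{L}_{1}^{(q)})$ with no Schur multipliers at all: by Proposition~\ref{UnityBoundedness} the set $\mathcal{B}_{1}^{(1)}$ is totally bounded, so it suffices to transport a finite $\varepsilon/2$-net $x_{1},\dots,x_{n}$, sending $x_{i}$ to $\frac{L_{S}^{(1)}(x_{i})}{L_{S}^{(q)}(x_{i}^{(q)})}\,x_{i}^{(q)}\in\mathcal{B}_{1}^{(q)}$ and using that, for finitely many fixed coefficient vectors, $q\mapsto T_{\mathbf{w}}^{(q)}$ and hence $q\mapsto L_{S}^{(q)}(x_{i}^{(q)})$ are norm-continuous (Proposition~\ref{GeneratorDecomposition}). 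You instead run the multiplier argument symmetrically, which forces your ``intrinsic-decay'' bound on $m_{\kappa}(b_{0})-m_{\kappa}(b_{0})^{(q)}$ and its $D_{S}$-commutator; you are right that Proposition~\ref{MagnitudeEstimate} does not furnish it, precisely because $m_{\kappa}$ does not preserve $\mathbb{C}_{q}[W]$ for $q\neq1$. Your sketch of that bound is sound, and in fact its band-wise ingredient is already in the paper: since $p_{t}(1)=0$, the difference $T_{\mathbf{w}}^{(1)}-T_{\mathbf{w}}^{(q)}$ consists exactly of the clique terms with $l\geq1$ in Proposition~\ref{GeneratorDecomposition}, whose factors are bounded by $C_{q,1}$, so $\|P_{i}(\chi_{n}(b_{0})-\chi_{n}(b_{0})^{(q)})P_{j}\|\leq KC_{q,1}\|\chi_{n}(b_{0})\delta_{e}\|_{2}$ is Lemma~\ref{HaagerupTypeCondition2} with base parameter $1$, deformation target $q$, and $\kappa$ set to $1$; only the $\kappa^{n}$-weighted band summation (using $\kappa^{n}\leq\kappa^{|i-j|}$ and $\|\chi_{n}(b_{0})\delta_{e}\|_{2}\leq 1/n$) is new bookkeeping. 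The trade-off: the paper's asymmetric route is shorter, while yours is more uniform, expressing both halves of the Hausdorff distance explicitly in terms of $C_{q,1}$ and $\kappa$ up to the compactness term $\varepsilon(\kappa)$, and avoiding any appeal to continuity of $q\mapsto L_{S}^{(q)}(x_{i}^{(q)})$. One last point: your coupling $1-\kappa(q)=C_{q,1}^{1/(c+1)}$ works, but the tension you present as the main obstacle dissolves under the paper's quantifier order: fix $\varepsilon$, choose $\kappa$ once so that $\varepsilon(\kappa)<\varepsilon/2$, and then let $q\to1$ with $\kappa$ frozen, so that $(1-\kappa)^{-c}$ is a constant and $C_{q,1}\to0$ annihilates every remaining term.
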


The proof of Theorem~\ref{ConvergenceTheorem} hinges on Remark~\ref{EstimateRemark}, which states that the length of any bridge provides an upper bound for the quantum propinquity. Thus, the primary task is to construct, for each $q$ sufficiently close to $1$, a bridge between $C_{r,q}^{\ast}(W)$ and $C_{r}^{\ast}(W)$ whose reach and height both tend to zero in the limit.

To this end, consider the unital C$^*$-algebra $\mathcal{B}(\ell^2(W))$ and the canonical inclusions
\[
\pi_q : C_{r,q}^{\ast}(W) \hookrightarrow \mathcal{B}(\ell^2(W)), \quad
\pi_1 : C_{r,1}^{\ast}(W) \hookrightarrow \mathcal{B}(\ell^2(W)).
\]
Then the quadruple $\gamma := (\mathcal{B}(\ell^2(W)), 1, \pi_q, \pi_1)$ defines a bridge from $C_{r,q}^{\ast}(W)$ to $C_{r}^{\ast}(W)$. Observe that the height of $\gamma$ with respect to the Lip-norms $L_{S}^{(q)}$ and $L_{S}^{(1)}$ is zero. Consequently,
\[
\lambda(\gamma \mid L_{S}^{(q)}, L_{S}^{(1)}) 
= \mathfrak{reach}(\gamma \mid L_{S}^{(q)}, L_{S}^{(1)}) 
= \max \left\{ \sup_{x \in \mathcal{L}_1^{(1)}} \mathrm{dist}(x, \mathcal{L}_1^{(q)}), 
\sup_{x \in \mathcal{L}_1^{(q)}} \mathrm{dist}(x, \mathcal{L}_1^{(1)}) \right\},
\]
where
\[
\mathcal{L}_1^{(q)} := \{ x \in \mathbb{C}_q[W] \mid L_{S}^{(q)}(x) \leq 1 \}, \qquad 
\mathcal{L}_1^{(1)} := \{ x \in \mathbb{C}[W] \mid L_{S}^{(1)}(x) \leq 1 \}.
\]

It therefore suffices to show that both suprema on the right-hand side converge to zero as $q \to 1$. This is the content of the following two propositions.

\begin{proposition}
Let $(W,S)$ be a finite rank, right-angled Coxeter system, and suppose that the graph $\Gamma$ defined in Theorem \ref{HaagerupTypeCondition} contains no induced square. Then
\[
\sup_{x \in \mathcal{L}_1^{(1)}} \emph{dist}(x, \mathcal{L}_1^{(q)}) \longrightarrow 0 \quad \text{as } q \to 1.
\]
\end{proposition}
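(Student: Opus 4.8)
The plan is to approximate each $x \in \mathcal{L}_1^{(1)}$ by the element $(m_\kappa(x))^{(q)} := \sum_{\mathbf{w} \in W} \kappa^{|\mathbf{w}|} x(\mathbf{w}) T_{\mathbf{w}}^{(q)} \in \mathbb{C}_q[W]$, where $\kappa \in (0,1)$ is an auxiliary parameter to be coupled with $q$, followed by a uniform rescaling so that the result genuinely lands in $\mathcal{L}_1^{(q)}$. Since adding a scalar multiple of $1$ alters neither $L_S^{(q)}$ nor the relevant distances, I may assume $\tau_1(x) = 0$, so that $x$ ranges over the set $\mathcal{B}_1^{(1)}$, which is norm-totally bounded by Theorem~\ref{CQMSStatement} and Theorem~\ref{CQMSCharacterization}. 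Note that $m_\kappa(x)$ again lies in $\mathcal{B}_1^{(1)}$: it is a finite sum in $\mathbb{C}[W]$, the multiplier $m_\kappa$ fixes the coefficient at $e$ (hence preserves $\tau_1(x)=0$), and $\|[D_S, m_\kappa(x)]\| \leq \|[D_S, x]\| \leq 1$ by the lemma preceding Proposition~\ref{MagnitudeEstimate}.

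I split the error as
\[
\|x - (m_\kappa(x))^{(q)}\| \leq \|x - m_\kappa(x)\| + \|m_\kappa(x) - (m_\kappa(x))^{(q)}\|.
\]
For the first summand I would use total boundedness together with the fact that $m_\kappa \to \mathrm{id}$ pointwise on $\mathbb{C}[W]$ as $\kappa \to 1$ and $\|m_\kappa\| \leq 1$; a standard equicontinuity argument on the totally bounded set $\mathcal{B}_1^{(1)}$ then gives $\sup_{x \in \mathcal{B}_1^{(1)}} \|x - m_\kappa(x)\| \to 0$ as $\kappa \to 1$, uniformly and with no reference to $q$. For the second summand I write $z := m_\kappa(x)$, so that $\chi_n(z) = \kappa^n \chi_n(x)$, decompose $z - z^{(q)} = \sum_{n \geq 1}(\chi_n(z) - \chi_n(z)^{(q)})$, and estimate each homogeneous piece by applying the $\kappa = 1$ case of Lemma~\ref{HaagerupTypeCondition2} band by band. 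Using that the pieces $P_j(\,\cdot\,)P_{j-b}$ are mutually orthogonal within a fixed band $b$, and that a degree-$n$ element has at most $2n+1$ nonzero bands, I obtain $\|\chi_n(z) - \chi_n(z)^{(q)}\| \leq (2n+1)K C_{1,q}\kappa^n \|\chi_n(x)\delta_e\|_2$. A Cauchy--Schwarz estimate against $\sum_n n^2\|\chi_n(x)\delta_e\|_2^2 = \|[D_S,x]\delta_e\|_2^2 \leq 1$ (via \eqref{eq:2-Norm}) then yields
\[
\|z - z^{(q)}\| \leq \frac{3K\kappa}{\sqrt{1-\kappa^2}}\, C_{1,q},
\]
where the geometric factor is finite for each fixed $\kappa < 1$; since $p_t(1) = 0$ forces $C_{1,q} \to 0$ as $q \to 1$, this summand tends to $0$ for fixed $\kappa$.

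To land inside $\mathcal{L}_1^{(q)}$ I must also control the Lip-norm of the approximant. Repeating the band estimate with the additional weight $|b|$ coming from $D_S$ gives $\|[D_S, z - z^{(q)}]\| \leq K C_{1,q}\,\Phi(\kappa)$ for a constant $\Phi(\kappa) := (\sum_{n \geq 1}(n+1)^2\kappa^{2n})^{1/2} < \infty$, whence, setting $\delta := K C_{1,q}\Phi(\kappa)$,
\[
L_S^{(q)}\big((m_\kappa(x))^{(q)}\big) \leq \|[D_S, m_\kappa(x)]\| + \delta \leq 1 + \delta
\]
uniformly in $x$. Then $y := (1+\delta)^{-1}(m_\kappa(x))^{(q)}$ is a genuine element of $\mathcal{L}_1^{(q)}$, and since $\sup_{x \in \mathcal{B}_1^{(1)}}\|x\| < \infty$ by total boundedness the rescaling costs only $\tfrac{\delta}{1+\delta}\|(m_\kappa(x))^{(q)}\| \to 0$ as $q \to 1$. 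The argument is completed by the usual coupling of limits: given $\varepsilon > 0$, I first freeze $\kappa$ close enough to $1$ that $\sup_{x}\|x - m_\kappa(x)\| < \varepsilon/2$, and then, with this $\kappa$ fixed, shrink the neighbourhood of $q = 1$ so that all $q$-dependent terms are $< \varepsilon/2$; this yields $\sup_{x \in \mathcal{L}_1^{(1)}} \mathrm{dist}(x, \mathcal{L}_1^{(q)}) < \varepsilon$.

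The main obstacle is precisely this coordination of the two parameters, whose root cause is that $m_\kappa$ does \emph{not} preserve $\mathbb{C}_q[W]$ for $q \neq 1$; indeed $m_\kappa(T_s^{(q)}) = \kappa T_s^{(q)} + (1-\kappa)p_s(q)Q_s$ leaves the algebra, so $m_\kappa(x^{(q)})$ is unavailable as an approximant. One is therefore forced to apply the multiplier before deforming, i.e.\ to use $(m_\kappa(x))^{(q)}$, which carries only the degreewise decay $\kappa^n$ rather than the bandwise decay $\kappa^{|i-j|}$ furnished directly by Lemma~\ref{HaagerupTypeCondition2}. This is exactly why the second summand must be handled through the explicit per-degree band summation above and cannot simply be read off from Proposition~\ref{MagnitudeEstimate}; and since the resulting geometric constants diverge as $\kappa \to 1$, the estimate is usable only after $\kappa$ has been fixed and $q$ is sent to $1$, which dictates the order of limits.
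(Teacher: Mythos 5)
Your proof is correct, but it takes a genuinely different route from the paper's, which for this direction is much shorter. The paper exploits the one feature that makes this inclusion easy: $\mathcal{L}_1^{(1)}$ does not depend on $q$. It covers $\mathcal{B}_1^{(1)}$ (totally bounded by Proposition~\ref{UnityBoundedness}) by finitely many $\varepsilon/2$-balls centered at $x_1,\dots,x_n$, deforms only these centers to $x_i^{(q)} := \sum_{\mathbf{w}} x_i(\mathbf{w}) T_{\mathbf{w}}^{(q)}$, rescales by $L_S^{(1)}(x_i)/L_S^{(q)}(x_i^{(q)})$ to land in $\mathcal{B}_1^{(q)}$, and observes that each map $q \mapsto y_i^{(q)}$ is norm-continuous with $y_i^{(1)} = x_i$; a single neighborhood of $1$ then works for all $n$ centers and the triangle inequality finishes --- no Schur multipliers, band estimates, or explicit rates appear. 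Your argument instead imports the $m_\kappa$-machinery that the paper reserves for the harder opposite inclusion, and your diagnosis of why it cannot be imported verbatim is exactly right: $m_\kappa$ does not preserve $\mathbb{C}_q[W]$ (indeed $m_\kappa(T_s^{(q)}) = \kappa T_s^{(q)} + (1-\kappa)p_s(q)Q_s$), so the multiplier must be applied before deforming, and the element $m_\kappa(x) - (m_\kappa(x))^{(q)}$ is covered by neither Lemma~\ref{HaagerupTypeCondition2} nor Proposition~\ref{MagnitudeEstimate}. Your substitute --- the degreewise decay $\chi_n(m_\kappa(x)) = \kappa^n \chi_n(x)$, the $\kappa = 1$ case of Lemma~\ref{HaagerupTypeCondition2} applied per degree, the bound $\|y\| \leq \sum_{|b| \leq n} \sup_j \|P_{j+b}\, y\, P_j\|$ valid for degree-$n$ elements, and Cauchy--Schwarz against \eqref{eq:2-Norm} --- is sound, as are the Lip-norm bound $1+\delta$ with the uniform rescaling, and the order of limits ($\kappa$ frozen first, then $q \to 1$, using that $C_{1,q} \to 0$ since $p_t(1) = 0$). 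What your route buys is an explicit, uniform modulus of convergence in terms of $C_{1,q}$ and $\kappa$, with compactness entering only through the soft facts $\sup_{x \in \mathcal{B}_1^{(1)}} \|x - m_\kappa(x)\| \to 0$ and $\sup_{x \in \mathcal{B}_1^{(1)}} \|x\| < \infty$; what it costs is the re-derivation of the Haagerup-type band estimates, which the paper's compactness argument avoids entirely because pointwise (rather than uniform) continuity in $q$ suffices when the unit ball being approximated is fixed.
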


\begin{proof}
Let $\varepsilon > 0$. By Proposition \ref{UnityBoundedness}, the set $\mathcal{B}_1^{(1)} := \mathcal{L}_1(C_r^*(W), L_S, \tau_1)$ is totally bounded. Hence, it can be covered by finitely many $\varepsilon/2$-balls centered at elements $x_1, \dots, x_n \in \mathcal{B}_1^{(1)}$. For each $1 \leq i \leq n$ and $q \in \mathbb{R}_{>0}^{(W,S)}$, define
\[
x_i^{(q)} := \sum_{\mathbf{w} \in W} x_i(\mathbf{w}) T_{\mathbf{w}}^{(q)} \in \mathbb{C}_q[W].
\]
Since $x_i \notin \mathbb{C}1$, we may define
\[
y_i^{(q)} := \frac{L_S^{(1)}(x_i)}{L_S^{(q)}(x_i^{(q)})} \, x_i^{(q)} \in \mathcal{B}_1^{(q)}.
\]
It follows from Proposition \ref{GeneratorDecomposition}, that the map $q \mapsto y_i^{(q)}$ is continuous and satisfies $\|x_i - y_i^{(q)}\| \to 0$ as $q \to 1$. Thus, there exists an open neighborhood $\mathcal{U} \subseteq \mathbb{R}_{>0}^{(W,S)}$ of $1$ such that $\|x_i - y_i^{(q)}\| < \varepsilon/2$ for all $1 \leq i \leq n$ and $q \in \mathcal{U}$.

Now let $x \in \mathcal{L}_1^{(1)}$ be arbitrary. Then $x - \tau_1(x) \in \mathcal{B}_1^{(1)}$, so for some $1 \leq i \leq n$ we have $\|x - \tau_1(x) - x_i\| < \varepsilon/2$. For $q \in \mathcal{U}$, we estimate
\[
\|x - (y_i^{(q)} + \tau_1(x))\| \leq \|x - \tau_1(x) - x_i\| + \|x_i - y_i^{(q)}\| < \varepsilon.
\]
Since $y_i^{(q)} + \tau_1(x) \in \mathcal{L}_1^{(q)}$ and $x \in \mathcal{L}_1^{(1)}$ was arbitrary, it follows that $\sup_{x \in \mathcal{L}_1^{(1)}} \text{dist}(x, \mathcal{L}_1^{(q)}) < \varepsilon$ for all $q \in \mathcal{U}$. The claim follows.
\end{proof}

\begin{proposition}
Let $(W,S)$ be a finite rank, right-angled Coxeter system, and suppose that the graph $\Gamma$ defined in Theorem \ref{HaagerupTypeCondition} contains no induced square. Then
\[
\sup_{x \in \mathcal{L}_1^{(q)}} \emph{dist}(x, \mathcal{L}_1^{(1)}) \longrightarrow 0 \quad \text{as } q \to 1.
\]
\end{proposition}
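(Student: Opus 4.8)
The plan is to run the strategy of the preceding proposition in reverse, now using the Schur multipliers $m_{\kappa}$ of Subsection~\ref{subsec:Positive-Definite-Functions} as a regularization device and the norm estimates of Proposition~\ref{MagnitudeEstimate} as the quantitative input. First I would reduce to the tracial part: for a self-adjoint $x \in \mathcal{L}_1^{(q)}$ one has $\tau_q(x) \in \mathbb{R}$ and $x - \tau_q(x)1 \in \mathcal{B}_1^{(q)}$, and since adding a real multiple of $1$ changes neither $L_S^{(1)}$ nor the distances involved, it suffices to approximate elements of $\mathcal{B}_1^{(q)}$ by elements of $\mathcal{B}_1^{(1)}$ uniformly as $q \to 1$. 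I would fix a compact neighborhood $\mathcal{K} \subseteq \mathbb{R}_{>0}^{(W,S)}$ of $1$ and set $\mathbf{B} := \bigcup_{q \in \mathcal{K}} \mathcal{B}_1^{(q)}$, which is totally bounded, hence bounded, by Proposition~\ref{UnityBoundedness}.

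Given $x \in \mathcal{B}_1^{(q)}$, the candidate approximant is $m_{\kappa}(x^{(1)}) = \sum_{\mathbf{w}} x(\mathbf{w}) \kappa^{|\mathbf{w}|} T_{\mathbf{w}}^{(1)} \in \mathbb{C}[W]$, for a parameter $\kappa < 1$ to be chosen. I would bound $\|x - m_{\kappa}(x^{(1)})\|$ through $m_{\kappa}(x)$, writing $x - m_{\kappa}(x^{(1)}) = (x - m_{\kappa}(x)) + m_{\kappa}(x - x^{(1)})$. The second summand is controlled by $\tfrac{K}{1-\kappa} C_{q,1}$ via Proposition~\ref{MagnitudeEstimate}, and since $p_t(1) = 0$ forces $C_{q,1} \to 0$ as $q \to 1$, it tends to $0$ for any fixed $\kappa$. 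For the first summand I would establish the uniform regularization estimate $\sup_{x \in \mathbf{B}} \|x - m_{\kappa}(x)\| \to 0$ as $\kappa \to 1$, which follows from total boundedness of $\mathbf{B}$ together with the facts that each $m_{\kappa}$ is a contraction and that $m_{\kappa}(y) \to y$ in norm for every finitely supported $y$, by a standard $\varepsilon/3$ covering argument. To land inside $\mathcal{B}_1^{(1)}$ I would then control the Lip-norm: using $[D_S, m_{\kappa}(\,\cdot\,)] = m_{\kappa}([D_S, \cdot\,])$ and the second estimate of Proposition~\ref{MagnitudeEstimate}, one gets $L_S^{(1)}(m_{\kappa}(x^{(1)})) \leq \|m_{\kappa}([D_S, x])\| + \|[D_S, m_{\kappa}(x - x^{(1)})]\| \leq 1 + \tfrac{\kappa K}{(1-\kappa)^2} C_{q,1}$, while $\tau_1(m_{\kappa}(x^{(1)})) = x(e) = \tau_q(x) = 0$; rescaling by $(1 + \tfrac{\kappa K}{(1-\kappa)^2} C_{q,1})^{-1}$ then produces $z \in \mathcal{B}_1^{(1)}$, at a norm cost bounded by the diameter of $\mathbf{B}$ times a factor tending to $0$ as $q \to 1$.

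The order of quantifiers is the crux and the main obstacle: the two estimates pull $\kappa$ in opposite directions, since the regularization needs $\kappa \to 1$ whereas the bounds $\tfrac{K}{1-\kappa}$ and $\tfrac{\kappa K}{(1-\kappa)^2}$ from Proposition~\ref{MagnitudeEstimate} blow up there. I would resolve this by choosing parameters in two stages: given $\varepsilon > 0$, first fix $\kappa$ close enough to $1$ that $\sup_{x \in \mathbf{B}} \|x - m_{\kappa}(x)\| < \varepsilon/3$, and then, holding this $\kappa$ fixed, shrink the neighborhood of $1$ so that the $C_{q,1}$-dependent terms (the contribution $\tfrac{K}{1-\kappa} C_{q,1}$ and the rescaling error) are each $< \varepsilon/3$, which is possible precisely because $C_{q,1} \to 0$ overwhelms the now-fixed constants. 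Combining the three bounds yields $\mathrm{dist}(x, \mathcal{B}_1^{(1)}) < \varepsilon$ for all $x \in \mathcal{B}_1^{(q)}$ and all $q$ in this neighborhood, and undoing the reduction to the tracial part gives $\sup_{x \in \mathcal{L}_1^{(q)}} \mathrm{dist}(x, \mathcal{L}_1^{(1)}) \to 0$, as claimed.
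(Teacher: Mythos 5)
Your proposal is correct and follows essentially the same route as the paper's proof: regularize by the Schur multipliers $m_{\kappa}$ uniformly on $\mathbf{B}=\bigcup_{q\in\mathcal{K}}\mathcal{B}_1^{(q)}$ (via total boundedness from Proposition~\ref{UnityBoundedness}), use Proposition~\ref{MagnitudeEstimate} to control both $\|m_{\kappa}(x-x^{(1)})\|$ and the Lip-norm defect, rescale by $(1+F(q,1))^{-1}$ to land in the unit Lip-ball, and fix $\kappa$ \emph{before} shrinking the neighborhood of $q=1$, exactly the two-stage quantifier order the paper uses. The only differences are cosmetic: an $\varepsilon/3$ split instead of the paper's $\varepsilon/2$ split (the paper merges your second and third error terms into one estimate), and neighborhoods of $1$ instead of sequences $q_i\to 1$.
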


\begin{proof}
Let $(q_i)_{i \in \mathbb{N}} \subseteq \mathbb{R}_{>0}^{(W,S)}$ be a sequence with $q_i \to 1$, and let $\mathcal{K} \subseteq \mathbb{R}_{>0}^{(W,S)}$ be a compact neighborhood of $1$ containing all $q_i$. Fix $\varepsilon > 0$. By Proposition \ref{UnityBoundedness} and the structure of the Schur multipliers from Subsection \ref{subsec:Positive-Definite-Functions}, we may choose $0 < \kappa < 1$ such that $\sup_{x \in \mathbf{B}} \|x - m_\kappa(x)\| < \frac{\varepsilon}{2}$, where $\mathbf{B} := \bigcup_{q \in \mathcal{K}} \mathcal{B}_1^{(q)}$. Since $\mathbf{B}$ is totally bounded, there exists $R > 0$ such that $\|x\| \leq R$ for all $x \in \mathbf{B}$.

Let $K > 0$ be the constant from Proposition \ref{MagnitudeEstimate}, define
\[
C_{q,q'} := \max_{\Gamma_0 \in \text{Cliq}(\Gamma)} \left| \prod_{t \in V\Gamma_0} p_t(q) - \prod_{t \in V\Gamma_0} p_t(q') \right|,
\]
and set
\[
F(q,q') := \frac{\kappa K}{(1 - \kappa)^2} C_{q,q'}.
\]
Choose $i_0 \in \mathbb{N}$ such that for all $i \geq i_0$,
\[
\frac{F(q_i,1)}{1 + F(q_i,1)} \left( R + \frac{1 - \kappa}{\kappa} \right) < \frac{\varepsilon}{2}.
\]
We claim that $\sup_{x \in \mathcal{L}_1^{(q_i)}} \text{dist}(x, \mathcal{L}_1^{(1)}) < \varepsilon$ for all $i \geq i_0$. Indeed, let $x \in \mathcal{L}_1^{(q_i)}$ be arbitrary and set $\overline{x} := x - \tau_{q_i}(x) \in \mathcal{B}_1^{(q_i)}$. Then $\|\overline{x} - m_\kappa(\overline{x})\| < \varepsilon/2$ and $\|\overline{x}\| \leq R$.

For $q \in \mathbb{R}_{>0}^{(W,S)}$, define $\overline{x}^{(q)} := \sum_{\mathbf{w} \in W \setminus\{e\}} x(\mathbf{w}) T_{\mathbf{w}}^{(q)} \in \mathbb{C}_q[W]$, and set
\[
y^{(q)} := \tau_{q_i}(x) + \left(1 + F(q_i, q)\right)^{-1} m_\kappa(\overline{x}^{(q)}) \in \mathcal{B}(\ell^2(W)).
\]
By Proposition \ref{MagnitudeEstimate} and the reverse triangle inequality,
\[
| \|[D_S, m_\kappa(\overline{x})]\| - \|[D_S, m_\kappa(\overline{x}^{(1)})]\| | \leq \|[D_S, m_\kappa(\overline{x} - \overline{x}^{(1)})]\| \leq F(q_i,1),
\]
so that
\[
\|[D_S, m_\kappa(\overline{x}^{(1)})]\| \leq 1 + F(q_i,1),
\]
implying $y^{(1)} \in \mathcal{L}_1^{(1)}$. Moreover, $y^{(q_i)} = m_\kappa(x)$. Using Proposition \ref{MagnitudeEstimate} again, we estimate:
\begin{align*}
\|x - y^{(1)}\| &\leq \|\overline{x} - m_\kappa(\overline{x})\| + \left\| m_\kappa(\overline{x}) - \left(1 + F(q_i,1)\right)^{-1} m_\kappa(\overline{x}^{(1)}) \right\| \\
&\leq \frac{\varepsilon}{2} + \frac{1}{1 + F(q_i,1)} \left( F(q_i,1) \|m_\kappa(\overline{x})\| + \|m_\kappa(\overline{x} - \overline{x}^{(1)})\| \right) \\
&\leq \frac{\varepsilon}{2} + \frac{F(q_i,1)}{1 + F(q_i,1)} \left( \|m_\kappa(\overline{x})\| + \frac{1 - \kappa}{\kappa} \right) \\
&\leq \frac{\varepsilon}{2} + \frac{F(q_i,1)}{1 + F(q_i,1)} \left( R + \frac{1 - \kappa}{\kappa} \right) \\
&<  \varepsilon.
\end{align*}
This shows that $\sup_{x \in \mathcal{L}_1^{(q_i)}} \text{dist}(x, \mathcal{L}_1^{(1)}) < \varepsilon$ for all $i \geq i_0$. Since $\varepsilon > 0$ was arbitrary, the result follows.
\end{proof}

\vspace{3mm}


\section{Final Remarks} \label{Section4}

\vspace{3mm}

Several natural questions arise from the results of this article, pointing toward various directions for future research -- whether through modifications of our initial assumptions, extensions to more general settings, or explorations of novel configurations. The aim of this concluding section is to reflect on our main results and methods, to acknowledge their limitations, and to outline potential avenues for further investigation.


\subsection{The Haagerup-type Condition} Building on \cite{OzawaRieffel05}, this work focuses on Iwahori--Hecke algebras associated with finite rank, right-angled Coxeter systems whose associated graphs contain no induced square. In this setting, the decomposition of generators given in Proposition~\ref{GeneratorDecomposition}, combined with the combinatorial analysis of Lemma~\ref{SetSize}, enabled us to verify that the canonical $\ast$-filtrations on these algebras satisfy the Haagerup-type condition. By the main result of \cite{OzawaRieffel05}, this in turn gives rise to compact quantum metric space structures. To the best of the authors' knowledge, decompositions of generators analogous to Proposition~\ref{GeneratorDecomposition} have not yet been explored outside the right-angled case. It would be of interest to investigate whether such decompositions exist for other classes of Coxeter groups and whether they can be fruitfully applied in the general setting of Section~\ref{sec:Iwahori--Hecke-Algebras}.

\begin{question} \label{HaagerupQuestion}
Let $(W, S)$ be a Coxeter system and $q \in \mathbb{R}_{>0}^{(W,S)}$. Under what conditions does the canonical $\ast$-filtration of the Iwahori--Hecke algebra $\mathbb{C}_{q}[W]$, as constructed in Section~\ref{sec:Iwahori--Hecke-Algebras}, satisfy the Haagerup-type condition -- that is, when does an analogue of Theorem~\ref{HaagerupTypeCondition}\eqref{FirstStatement} hold?
\end{question}

Based on the characterizations provided in Theorem~\ref{HaagerupTypeCondition} and Theorem~\ref{MoussongTheorem}, we expect an affirmative answer to Question~\ref{HaagerupQuestion} if and only if the Coxeter group $W$ is word-hyperbolic. 

In \cite[Section~6]{OzawaRieffel05}, it was shown that the reduced free product of filtered C$^{\ast}$-algebras satisfies the Haagerup-type condition whenever its constituent algebras do. In \cite[Question~6.10]{OzawaRieffel05}, the authors ask whether this result extends to amalgamated free products of C$^{\ast}$-algebras. Notably, every Hecke C$^{\ast}$-algebra associated with a right-angled Coxeter system admits a decomposition as an amalgamated free product of Hecke C$^{\ast}$-algebras arising from certain Coxeter subsystems (see \cite[Theorem~2.15]{CaspersFima17} and the proof of \cite[Corollary~3.4]{Caspers20}). Our Theorem~\ref{HaagerupTypeCondition} hence shows that the Haagerup-type condition is not preserved in general under such amalgamations. However, we expect that preservation does hold when amalgamating over finite-dimensional subalgebras. Utilizing the C$^{\ast}$-algebraic analogue of the decomposition in \cite[Proposition~8.3]{CaspersKlisseSkalskiVosWasilewski23}, together with \cite[Proposition~8.8.5]{Davis08}, one may deduce that the canonical $\ast$-filtrations on Iwahori--Hecke algebras of virtually free Coxeter groups satisfy the Haagerup-type condition.

Another generalization of free products is given by reduced graph products of C$^{\ast}$-algebras, as introduced in \cite{CaspersFima17}. This construction associates to a simplicial graph and a collection of C$^{\ast}$-algebras (each endowed with a GNS-faithful state) at each vertex a new C$^{\ast}$-algebra that blends its components according to the graph’s structure. It interpolates between Voiculescu's reduced free product \cite{Voiculescu85} (see also \cite{Avitzour82}) and the minimal tensor product, while preserving essential properties. In particular, every right-angled Hecke C$^{\ast}$-algebra decomposes as a graph product of 2-dimensional C$^{\ast}$-algebras over the graph $\Gamma$ defined in Theorem~\ref{HaagerupTypeCondition} (see again \cite[Corollary~3.4]{Caspers20}). In analogy with the construction in \cite[Section~6]{OzawaRieffel05}, if the vertex algebras admit $\ast$-filtrations, then one may construct a $\ast$-filtration on the entire graph product. This naturally leads to the question of whether the Haagerup-type condition is preserved under such graph product constructions. By \cite[Proposition~2.6]{CaspersKlisseLarsen21}, reduced elements in graph products admit decompositions similar to that in Proposition~\ref{GeneratorDecomposition}. Using this in conjunction with suitable combinatorial estimates, we expect a generalization of Theorem~\ref{HaagerupTypeCondition} to hold.


\subsection{Compact Quantum Metric Structures}

The main motivation for studying the Haagerup-type condition lies in its connection with compact quantum metric spaces. As spelled out in Section \ref{sec:Iwahori--Hecke-Algebras}, we therefore extend Question \ref{HaagerupQuestion} to the one for Lip-norms on Hecke C$^{\ast}$-algebras.

\begin{question} \label{MainQuestion2}
Let $(W, S)$ be a Coxeter system and $q \in \mathbb{R}_{>0}^{(W,S)}$. Under what conditions does the pair $(C_{r,q}^{*}(W), L_{S}^{(q)})$ defined in Section~\ref{sec:Iwahori--Hecke-Algebras} define a compact quantum metric space?
\end{question}

Recall that when $q = 1$, the Hecke C$^{\ast}$-algebra $C_{r,1}^{*}(W)$ reduces to the group C$^{\ast}$-algebra $C_r^*(W)$ of the Coxeter group $W$. Lipschitz seminorms arising from Dirac operators associated with word-length functions on reduced group C$^{\ast}$-algebras of finitely generated groups have been studied in \cite{Rieffel02, OzawaRieffel05, ChristRieffel}. To the authors' knowledge, the only known results on Lip-norms in this context pertain to groups of polynomial growth (see \cite{Rieffel02, ChristRieffel}) and to word-hyperbolic groups (see \cite{OzawaRieffel05}). Besides the immediate question for an extension of these results to larger classes of groups, it would be interesting to see whether the ideas in \cite{Rieffel02} and \cite{ChristRieffel} may be applied in the context of Iwahori--Hecke algebras. One natural class to consider is that of \emph{affine type} Coxeter systems, for which the underlying Coxeter group is infinite and virtually Abelian. Such groups admit a concrete geometric realization as reflection groups acting on Euclidean spaces (see \cite[Chapters~1 and 4]{Humphreys90}). The associated Iwahori--Hecke algebras have a rich structure and admit a so-called \emph{Bernstein presentation} in terms of crystallographic root systems (see \cite{Lusztig89, Bump10, Haines10}). In particular, they are finitely generated over their centers, a feature that may prove useful in addressing Question~\ref{MainQuestion2}.


\subsection{Continuity in the Quantum Gromov--Hausdorff Propinquity}

In addition to Theorem~\ref{HaagerupTypeCondition}, a central result of this paper is the continuity in Theorem~\ref{ConvergenceTheorem} for finite rank, right-angled, word-hyperbolic Coxeter systems. The proof relies on a family $(m_\kappa)_{0 < \kappa \leq 1}$ of Schur multipliers induced by positive definite functions on Coxeter groups, along with a variant of the inequality in Theorem~\ref{HaagerupTypeCondition}; see Subsection~\ref{subsec:Positive-Definite-Functions}. A key ingredient is the fact that $m_\kappa$ maps the reduced group C$^{\ast}$-algebra into itself for each $\kappa $. Unfortunately, this property does not extend to the $q$-deformed Hecke C$^{\ast}$-algebras, which obstructs a direct extension of the proof to the case $q \neq 1$. A possible approach is to suitably modify the family $(m_\kappa)_{0<\kappa \leq 1}$ to circumvent this issue, leading to the following question.

\begin{question}
Let $(W, S)$ be a finite rank, right-angled Coxeter system, and suppose the graph $\Gamma$ defined in Theorem~\ref{HaagerupTypeCondition} contains no induced square. Is the map
\[
\mathbb{R}_{>0}^{(W, S)} \ni q \mapsto (C_{r,q}^{\ast}(W), L_{S}^{(q)})
\]
continuous with respect to the quantum Gromov--Hausdorff propinquity?
\end{question}

We expect that, using similar techniques to the ones in Section \ref{qGHContinuity}, an analogue of Theorem \ref{ConvergenceTheorem} can be obtained for virtually free Coxeter groups.

\vspace{3mm}


\end{document}